\newtheorem{theorem}{Theorem}
\newtheorem{remark}{Remark}
\newtheorem{definition}{Definition}
\newtheorem{corollary}{Corollary}
\newtheorem{lemma}{Lemma}
\newtheorem{proposition}{Proposition}
\newtheorem{example}{Example}
\newcommand{\R}{\mathbb{R}}
\newcommand{\beq}{\begin{equation}}
\newcommand{\eeq}{\end{equation}}
\newcommand{\M}{\mathcal{M}}
\newcommand{\blue}{}
\newcommand{\bblue}{}
\title{\LARGE \bf 
Geometric and asymptotic properties associated with linear switched systems
\thanks{This research was partially supported by the iCODE institute, research project of the Idex Paris-Saclay.}
}
\author{Y. Chitour${}^\ddagger$, M. Gaye${}^\dagger$, P. Mason
\thanks{M. Gaye is with Centre de math\'ematiques appliqu\'ees, \'Ecole polytechnique, Palaiseau, France and the Laboratoire des signaux et syst\`emes, Supelec, Gif-sur-Yvette, France.
{\tt\small moussa.gaye@cmap.polytechnique.fr, moussa.gaye@lss.supelec.fr.}}%
\thanks{Y. Chitour and P. Mason are with the Laboratoire des signaux et syst\`emes, Universit\'e Paris-Sud, CNRS, Sup\'elec, Gif-sur-Yvette, France. 
{\tt\small yacine.chitour@lss.supelec.fr, paolo.mason@lss.supelec.fr.}}%
}
\begin{document}
\maketitle

\medskip

\begin{abstract} Consider continuous-time linear switched systems on $\mathbb{R}^n$ associated with compact convex sets of matrices. When the system is irreducible and the largest Lyapunov exponent is equal to zero, there always exists a Barabanov norm (i.e. a norm which is non increasing along trajectories of the linear switched system together with extremal trajectories starting at every point, that is trajectories of the linear switched system with constant norm). This paper deals with two sets of issues: $(a)$ properties of Barabanov norms such as uniqueness up to homogeneity and strict convexity; $(b)$ asymptotic behaviour of the extremal solutions of the linear switched system.
Regarding Issue $(a)$, we provide partial answers and propose four open problems motivated by appropriate examples. 
As for Issue $(b)$, we establish, when $n=3$, a Poincar\'e-Bendixson theorem under a regularity assumption on the set of matrices defining the system. Moreover, we revisit the noteworthy result of N.E. Barabanov \cite{Barabanov3} dealing with the linear switched system on $\mathbb{R}^3$ associated with a pair of Hurwitz matrices $\{A,A+bc^T\}$. 
We first point out a fatal gap in Barabanov's argument in connection with geometric features associated with a Barabanov norm. We then provide partial answers relative to the asymptotic behavior of this linear switched system.
\end{abstract}

\section{Introduction}
A continuous-time switched system is defined by a family $\mathcal{F}$ of continuous-time dynamical systems
and a rule determining at each time which dynamical system in $\mathcal{F}$ is 
responsible for the time evolution. In more precise terms, if $\mathcal{F}=\{f_u,\ u\in U\}$, where $U$ is a subset of $\mathbb{R}^m$, $m$ positive integer, and the $f_u$'s are sufficiently regular vector fields on a finite dimensional smooth  manifold $M$ one considers the family of dynamical systems $\dot x=f_u(x)$ with $x\in M$ and $u\in U$. The abovementioned rule is given by assigning a {\it switching signal}, i.e., a function 
$u(\cdot):\mathbb{R}_+\rightarrow U$, which is assumed to be at least measurable. One must finally define the class of admissible switching signals as the set of all rules one wants to consider. It can be for instance the set of all measurable 
$U$-valued functions or smaller subsets of it: piecewise continous functions, piecewise constant functions with positive {\it dwell-time}, i.e., the minimal duration between two consecutive  time discontinuities of $u(\cdot)$ (the so-called switching times) is positive, etc...
It must be stressed that an admissible switching signal is not known a priori and (essentially) represents a disturbance which is not possible to handle. This is why switched systems also fall into the framework of uncertain systems (cf \cite{Bubnicki} for instance). Switched systems are also called ``$p$-modal systems'' if the cardinality of $U$ is equal to some positive integer $p$, ``dynamical polysystems'', ``input systems''. They are said to be multilinear, or simply linear, when the state space is equal to $\mathbb{R}^n$, $n$ positive integer, and all the vector fields $f_u\in \mathcal{F}$ are linear. 
The term ``switched system'' was originally given to cases where the switching signal is piecewise constant or the set $U$ is finite: the dynamics of the switched system switches at switching times from one mode of operation $f_u$ to another one $f_v$, with $u,v\in U$. For a discussion of various issues related to switched systems, we refer to \cite{Liberzon,Margaliot,Shorten}.

A typical problem for switched systems goes as follows. Assume that, for every fixed $u\in U$, the dynamical system $\dot x = f_u(x)$ satisfies a given property $(P)$. Then one can investigate conditions under which Property $(P)$ still holds for every time-varying dynamical system $\dot x = f_{u(\cdot)}(x)$ associated with an arbitrary admissible switching signal $u(\cdot)$. The basic feature rendering that study non trivial comes from the remark that it is not sufficient for 
Property $(P)$ to hold uniformly with respect to (let say) the class of piecewise constant switching signals that it holds for each constant switching signal. For instance, one can find in \cite{Liberzon} an example in $\mathbb{R}^2$ of a linear switched system where admissible switching signals are piecewise constant and one switches between two linear dynamics defined by two matrices $A,B$. Then, one can choose $A,B$ both Hurwitz and switch in a such a way so as to go exponentially fast to infinity. Note also that one can do it with a positive dwell-time and therefore the issue of stability of switched systems is not in general related to switching arbitrarily fast.

In this paper, we focus on linear switched systems and Property $(P)$ is that of asymptotic stability (with respect to the origin). Let us first precise the setting of the paper. Here and after, a continuous-time linear switched system is described by a differential equation of the type
\begin{align}
\label{sys0}
\dot{x}(t)=A(t)x(t),\ t\geq 0,
\end{align}
where $x\in\mathbb{R}^n$, $n$ is a positive integer and $A(\cdot)$ is any measurable function taking values in a compact and convex subset $\mathcal{M}$ of $\mathbb{R}^{n\times n}$ (the set of $n\times n$ real matrices). 

There exist several approaches to address the stability issue for switched systems.  The most usual one consists in the search of a {\it common Lyapunov function} (CLF for short), i.e, a real-valued function which is a Lyapunov function for every dynamics $\dot x=f_u(x)$, $u\in U$. For the case of linear switched systems, that CLF can be chosen to be a homogeneous polynomial but, contrarily with respect  to the case where $\mathcal{M}$ is a singleton and one can choose the Lyapunov function to be a quadratic form, the degree of the polynomial CLF could be arbitrarily large (cf. \cite{mason1}). More refined tools rely on 
multiple and non-monotone Lyapunov functions, see for instance \cite{Branicki,Shorten}. Let us also mention linear switched systems technics based on the analysis of the Lie algebra generated by the matrices of  $\mathcal{M}$, cf. \cite{Agrachev}.

The approach we follow in this paper for investigating the stability issue of System~\eqref{sys0} is of more geometric nature and is based on the characterization of the worst possible behaviour of the system, i.e., one tries to determine a restricted  set of trajectories corresponding to some admissible switched signals (if any) so that the overall asymptotic behaviour of 
System~\eqref{sys0} is dictated by what happens for these trajectories.  It may happen that the latter reduces to a single one and we refer to it as the {\it worst-case trajectory}. This approach completely handles the stability issue for continuous-time two-dimensional linear switched systems when the cardinality of $\mathcal{M}$ is equal to two, cf. \cite{BBM,Boscain}. In higher dimensions, the situation is much more complicated. One must first  
consider the largest Lyapunov exponent of System~\eqref{sys0} given by
\begin{align}
\rho(\mathcal{M}) :=\sup\left( \limsup_{t\to+\infty}\frac{1}{t}\log\|x(t)\| \right),
\end{align}
where the supremum is taken over the set of solutions of~\eqref{sys0} associated with any  non-zero initial value and any switching law. 
Then, System~\eqref{sys0} is asymptotically stable (in the sense of Lyapunov) if and only if $\rho(\mathcal{M})<0$. In that case, one actually gets more, namely exponential asymptotic stability (i.e., there exist $\alpha>0$ and $\beta>0$ such that $\|x(t)\|\leq\alpha\exp(-\beta t)\|x(0)\|$ for every $t\geq0$ and for every solution $x(\cdot)$ of System~\eqref{sys0}) thanks to Fenichel lemma (see \cite{Colonius} for instance). On the other hand, \eqref{sys0} admits a solution which goes to infinity exponentially fast if and only if $\rho(\mathcal{M})>0$. Finally when $\rho(\mathcal{M})=0$ then either every solution of \eqref{sys0} starting from a bounded set remains uniformly bounded with one trajectory not converging to zero or System~\eqref{sys0} admits a trajectory going to infinity. \bblue{The notion of Joint Spectral Radius plays an analogous role for the description of the stability properties of discrete-time switched systems (cf.~\cite{Jungers} and references therein).} Note that when $\mathcal{M}$ reduces to a single matrix $A$, $\rho(A)$ is equal to the maximum real part of the eigenvalues of $A$. In the general case the explicit computation of $\rho(\M)$ is a widely open problem except for particular cases of linear switched systems in dimension less than or equal to $2$ (see e.g.~\cite{BBM,Boscain}).

Let us consider the subset $\mathcal{M}':=\{A-\rho(\mathcal{M})I_n:A\in\mathcal{M}\}$ of $\mathbb{R}^{n\times n}$, where $I_n$ denotes the identity matrix of $\mathbb{R}^{n\times n}$ and the continuous-time switched system corresponding to $\mathcal{M}'$. Notice that trajectories associated with $\M$ and trajectories associated with $\M'$ only differ at time $t$ by a scalar factor $e^{\rho(\M)t}$ and thus, in order to  understand the qualitative behaviour of trajectories of System~\eqref{sys0}, one  can always assume that $\rho(\mathcal{M})=0$, by eventually replacing $\M$ with $\M'$. Thus, this paper only deals with the case $\rho(\M)=0$.

The fundamental tool used to analyze trajectories of System~\eqref{sys0} is the concept of Barabanov norm (see \cite{Barabanov1,Wirth} and Definition~\ref{bnorm} below), which is well defined for irreducible sets of matrices. In that case recall that the value of a Barabanov norm decreases along trajectories of  \eqref{sys0} and, starting from every point $x\in\mathbb{R}^n$, there exists a trajectory of \eqref{sys0} along which a Barabanov norm is constant and such a trajectory is called an {\it extremal trajectory} of System~\eqref{sys0}.  Notice that the  concept of Barabanov norm can be extended to the case of discrete-time systems with spectral radius equal to $1$ (cf. \cite{Barabanov-fin,Wirth}).

By using the Pontryagin maximum principle, we provide a characterization of extremal trajectories first defined on a finite interval of time and then on an infinite one. Note that for the latter, this characterization was also derived in \cite{Barabanov1} by means of other technics. Moreover,
characterizing the points where a Barabanov norm $v$ is differentiable is a natural structural question.
In general, we can only infer from the fact that $v$ is a norm the conclusion that $v$ is differentiable almost everywhere on its level sets. We will provide a sufficient condition for differentiability of $v$ at a point $x\in\mathbb{R}^n$ in terms of the extremal trajectories reaching~$x$.

Another interesting issue is that of the uniqueness of Barabanov norms up to homogeneity (i.e., for every Barabanov norms $v_1(\cdot)$ and $v_2(\cdot)$ there exists $\mu>0$ such that $v_1(\cdot)=\mu v_2(\cdot)$). 
For discrete-time linear switched systems, the uniqueness of the Barabanov norms has been recently addressed  (cf. \cite{Morris1, Morris2} and references therein). Regarding continuous-time linear switched systems we provide a sufficient condition for  uniqueness, up to homogeneity, of the Barabanov norm  involving the $\omega$-limit set of extremal trajectories. We also propose an open problem which is motivated by an example of a two-dimensional  continuous-time linear switched system where one has an infinite number of Barabanov norms.

Recall that the Barabanov norm defined in \cite{Barabanov1} (see Equation~\eqref{norm} below) is obtained at every point $x\in \mathbb{R}^n$ by taking the supremum over all possible trajectories $\gamma$ of System~\eqref{sys0} of the limsup, as $t$ tends to infinity, of 
$\Vert \gamma(t)\Vert$, where $\Vert \cdot\Vert$ is a fixed vector norm on $\R^n$.
It is definitely a non trivial issue to determine whether this supremum is attained or not. If this is the case then the corresponding trajectory must be extremal. We provide an example in dimension two where the above supremum is not attained and propose an open problem in dimension three which asks whether the supremum is always reached if $\mathcal{M}$ is made of non-singular matrices. The above
mentioned issue lies at the heart of the gap in the proof of the main result of \cite{Barabanov3}. In that paper, one considers the linear switched system on $\mathbb{R}^3$ associated with a pair of matrices $\{A,A+bc^T\}$ where $A$ is Hurwitz, the vectors $b,c\ \in\R^3$ are such that the pairs $(A,b)$ and $(A^T,c)$ are both controllable and moreover the corresponding maximal Lyapunov exponent  is equal to zero (see for instance \cite{Margaliot} for a nice introduction to such linear switched systems and their importance in robust linear control theory). In the sequel,  we refer to such three dimensional switched systems as {\it Barabanov linear switched systems}.
Then, it is claimed in \cite{Barabanov3} that every extremal trajectory of a Barabanov linear switched system converges asymptotically to a unique periodic central symmetric extremal trajectory made of exactly four bang arcs.
In the course of his argument, N. E. Barabanov assumes that  the supremum in the definition of a semi-norm built similarly to a Barabanov norm is actually reached (cf. \cite[Lemma 9, page 1444]{Barabanov3}). Unfortunately there is no indication in the paper for such a fact to hold true and one must therefore conclude that the main result of \cite{Barabanov3} remains open.

We conclude the first part of the paper by mentioning another  feature concerning the geometry of Barabanov balls, namely that of their strict convexity. Indeed that issue is equivalent to the continuous differentiability of the dual Barabanov norm and it also has implications for the asymptotic behavior of the extremal trajectories. We prove that Barabanov balls are stricly convex in dimension two if $\mathcal{M}$ is made of non-singular matrices
and in higher dimension in case $\mathcal{M}$ is a $C^1$ domain of $\mathbb{R}^{n\times n}$. We also propose an open problem still  motivated by the example provided previously for the ``supremum-maximum''  issue and we ask whether Barabanov balls are strixtly convex under the assumption that $\mathcal{M}$ is made of non-singular matrices for $n>2$.
Note that several issues  described previously have been already discussed with less details in \cite{GCM}.

The second part of the paper is devoted to the analysis of the asymptotic behaviour of the extremal solutions of System~\eqref{sys0} in dimension three. Our first result consists in 
a Poincar\'e-Bendixson theorem saying that every extremal solution of System~\eqref{sys0} tends to a periodic solution of System~\eqref{sys0}. This result is obtained under a  regularity assumption on the set of matrices $\mathcal{M}$ (Condition~G) which is slightly weaker  than the analogous Condition~C  considered in~\cite{Barabanov2}. 
Note that \cite[Theorem 3]{Barabanov3} contains the statement of a result of Poincar\'e-Bendixson type similar to ours but the argument provided there (see page 1443) is extremely sketchy.

We then proceed by trying to provide a valid argument for Barabanov's result in \cite{Barabanov3}. We are not able to prove the complete statement 
but we can provide partial answers towards that direction. The first noteworthy result we get (see Corollary~\ref{cor1}) is that every periodic trajectory of Barabanov linear switched system is bang-bang with either two or four bang arcs. That fact has  an interesting numerical consequence namely that, in order to test the stability of the previous linear switched system,  it is enough to test products of at most four terms of the type $e^{tA}$ or $e^{t(A+bc^T)}$. Moreover, our main theorem (cf. Theorem~\ref{main-th} below) asserts the following. 
Consider a Barabanov linear switched system. Then, on the corresponding unit Barabanov sphere, there exists  a finite number of isolated periodic trajectories with at most four bangs and a finite number of $1$-parameter injective and continuous families of 
periodic trajectories with at most four bang arcs starting and finishing respectively at two distinct periodic trajectories with exactly two bang arcs. We actually suspect that such continuous families of periodic trajectories never occur, although at the present stage we are not able to prove it.
We also have a result describing the $\omega$-limit sets of any trajectory of a Barabanov linear switched system. We prove (see Proposition \ref{omega0}) that every trajectory of a Barabanov linear switched system either converges to a periodic trajectory (which can reduce to zero) or to the set union of a $1$-parameter injective and continuous family of periodic trajectories.

The structure of the paper goes as follows. In Section~\ref{s2}, we recall basic definitions of Barabanov norms and we  
provide a characterization of extremal trajectories (similar to that of \cite{Barabanov1}) by using the Pontryagin maximum principle. 
In Section~\ref{s3}, several issues are raised relatively to geometric properties of Barabanov norms and balls  such as uniqueness up to homogeneity of  the Barabanov norm and strict convexity of its unit ball. We also propose open questions.
We  state our Poincar\'e-Bendixson result in Section~\ref{s6} and we collect in Section~\ref{s-33} our investigations on Barabanov linear switched systems. 

\bigskip

{\bf Acknowledgements} The authors would like to thank E. Bierstone  and J. P. Gauthier for their help in the argument of Lemma~\ref{lem:accu}.
\subsection{Notations}  
If $n$ is a positive integer, we use $\mathbb{R}^{n\times n}$ to denote the set of $n$-dimensional square matrices with real coefficients, $A^T$ the matrix  transpose of an $n\times n$ matrix $A$, $I_n$ the $n$-dimensional identity matrix,
$x^T y$
the usual scalar product of $x, y\in\mathbb{R}^n$ and $\|\cdot\|$ the Euclidean norm on $\mathbb{R}^n$. 
We use $\R_+$, $\R^*$ and $\R_+^*$  respectively, to denote the set of non-negative real numbers, the set of non zero real numbers and the set of positive real numbers respectively. If $\M$ is a subset of $\mathbb{R}^{n\times n}$, we use $conv(\M)$ to denote the convex hull of $\M$. 
Given two points $x_0,x_1\in\mathbb{R}^n$ we will indicate as $(x_0,x_1)$ the open segment connecting $x_0$ with $x_1$. Similarly, we will use the bracket symbols ``$[$''  and ``$]$'' to denote left and right closed segments.
If $x\in\R$, we use $sgn(x)$ to denote $x/\vert x\vert$ if $x$ is non zero (i.e. the sign of $x$) and $[-1,1]$ if $x=0$.
If $A:\R_+\rightarrow \mathbb{R}^{n\times n}$ is measurable and locally bounded, the {fundamental matrix associated with} $A(\cdot)$ is the function $R(\cdot)$ solution of the Cauchy problem defined by $\dot{R}(t)=A(t)R(t)$ and $R(0)=I_n$.

\section{Barabanov norms and adjoint system}
\label{s2}

\subsection{Basic facts}
In this subsection, we collect basic definitions and results on 
Barabanov norms for linear switched system associated with a compact convex subset $\mathcal{M}\subset \R^{n\times n}$.
\begin{definition}
\label{def1}
We say that $\mathcal{M}$ (or System~\eqref{sys0}) is \textit{reducible} if there exists a proper subspace of $\mathbb{R}^n$ invariant with respect to every matrix $A\in\mathcal{M}$. Otherwise, $\mathcal{M}$ (or System~\eqref{sys0}) is said to be \textit{irreducible}. 
\end{definition}

We define the function 
$v(\cdot)$ on $\mathbb{R}^n$ as

\vspace{-14pt}

\begin{align}
\label{norm}
v(y):=\sup\left(\limsup_{t\rightarrow+\infty}\|x(t)\|\right),
\end{align}

\vspace{-5pt}

\noindent where the supremum is taken over all solutions $x(\cdot)$ of~\eqref{sys0} satisfying $x(0)=y$.
From~\cite{Barabanov1}, we have the following fundamental result.
\begin{theorem}[\cite{Barabanov1}]
\label{th1}
Assume that $\mathcal{M}$  is irreducible and $\rho(\mathcal{M})=0$. Then the function $v(\cdot)$ defined in~\eqref{norm} is a norm on $\mathbb{R}^n$ with the following properties:
\begin{enumerate}
\item[$1.$] for every solution $x(\cdot)$ of~\eqref{sys0} 
we have that $v(x(t))\leq v(x(0))$ for every $t\geq0$;
\item[$2.$] for every $y\in\mathbb{R}^n$, there exists a solution $x(\cdot)$ of~\eqref{sys0} starting at $y$ 
such that $v(x(t))=v(x(0))$ for every $t\geq0$.   
\end{enumerate}
\end{theorem}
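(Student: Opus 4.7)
The plan is to derive all the claims by combining the linearity of~\eqref{sys0} with the concatenation property of admissible switching signals, and by invoking irreducibility twice in order to discard degenerate $\mathcal{M}$-invariant subspaces. First, positive homogeneity $v(\lambda y)=|\lambda|v(y)$ and subadditivity $v(y_1+y_2)\leq v(y_1)+v(y_2)$ follow directly from linearity, since for a fixed switching signal the trajectory from $\lambda y$ (respectively $y_1+y_2$) is $\lambda$ times (respectively the sum of) the trajectory from $y$ (respectively from $y_1$ and $y_2$). For item~1, I would observe that given any trajectory $x(\cdot)$ from $y$ and any trajectory $z(\cdot)$ starting at $x(t_0)$, gluing $x|_{[0,t_0]}$ with $z(\cdot-t_0)|_{[t_0,\infty)}$ produces again a trajectory of~\eqref{sys0} from $y$; hence $\limsup_{\tau\to\infty}\|z(\tau)\|\leq v(y)$, and taking the supremum over $z$ yields $v(x(t_0))\leq v(y)$. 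In particular the sublevel sets $V_\infty:=\{y:v(y)<+\infty\}$ and $V_0:=\{y:v(y)=0\}$ are linear subspaces of $\mathbb{R}^n$ invariant under $e^{tA}$ for every $A\in\mathcal{M}$ and every $t\geq 0$, hence invariant under each matrix of $\mathcal{M}$.

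Next I would show that $v$ is a norm. Finiteness relies on the classical fact that, under irreducibility and $\rho(\mathcal{M})=0$, the semigroup $\{R(t):t\geq 0\}$ is uniformly bounded in operator norm, which immediately yields $v(y)\leq C\|y\|<+\infty$ for every $y$. For positive definiteness, if $V_0\neq\{0\}$ then irreducibility forces $V_0=\mathbb{R}^n$, so $\|x(t)\|\to 0$ along every trajectory of~\eqref{sys0}; by Fenichel's lemma this is equivalent to exponential stability and thus $\rho(\mathcal{M})<0$, contradicting the hypothesis. Hence $V_0=\{0\}$ and $v$ is a norm on $\mathbb{R}^n$.

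For item~2, I would first prove the intermediate claim that $\sup\{v(x(t)):x(0)=y,\ x\text{ trajectory of~\eqref{sys0}}\}=v(y)$ for every $t\geq 0$. The inequality $\leq$ is item~1; for the reverse, given $\epsilon>0$ I pick a trajectory $x$ from $y$ with $\limsup_{s\to\infty}\|x(s)\|>v(y)-\epsilon$ and a sequence $s_k\to\infty$ realizing this bound. Since $\tau\mapsto x(s_k+\tau)$ is itself a trajectory from $x(s_k)$, one gets $v(x(s_k))\geq\limsup_\tau\|x(s_k+\tau)\|>v(y)-\epsilon$, and monotonicity of $s\mapsto v(x(s))$ propagates this lower bound to every $s\in[0,s_k]$; letting $k\to\infty$ yields $v(x(s))>v(y)-\epsilon$ for all $s\geq 0$. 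Applying this with $\epsilon=1/n$ produces a sequence of trajectories $x_n$ from $y$ with $\inf_{s\geq 0}v(x_n(s))\geq v(y)-1/n$. These are equibounded and equicontinuous on each compact time interval (because $\|\dot x_n(s)\|\leq\max_{A\in\mathcal{M}}\|A\|\cdot C\|y\|$), so by Arzel\`a--Ascoli and a diagonal extraction we may assume, up to subsequence, $x_n\to x^*$ locally uniformly; convexity of $\mathcal{M}$ (via Filippov's theorem) ensures that $x^*$ is itself a trajectory of~\eqref{sys0} from $y$, and continuity of the norm $v$ then yields $v(x^*(s))=v(y)$ for every $s\geq 0$.

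The main obstacle is the finiteness of $v$: ruling out $V_\infty=\{0\}$ is not an immediate consequence of $\rho(\mathcal{M})=0$, which only provides the subexponential estimate $\sup\|R(t)\|\leq C_\epsilon e^{\epsilon t}$ valid for every $\epsilon>0$. Upgrading this subexponential bound to a uniform one genuinely requires irreducibility together with a nontrivial normalization/compactness argument on the unit sphere, which lies at the heart of Barabanov's construction and is what makes the existence of an extremal norm a subtle matter rather than a soft dichotomy.
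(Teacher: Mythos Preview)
The paper does not actually prove Theorem~\ref{th1}: it is stated as a fundamental result imported from Barabanov's original paper~\cite{Barabanov1}, with no argument given here. So there is no ``paper's own proof'' to compare against, only the cited literature.

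That said, your proposal is a faithful outline of the standard proof one finds in the literature (e.g.\ \cite{Barabanov1,Wirth}). The decomposition is the right one: homogeneity and subadditivity from linearity, monotonicity from concatenation of switching signals, positivity from the invariance of $V_0$ together with irreducibility and Fenichel's lemma, and the existence of extremal trajectories via an Arzel\`a--Ascoli/Filippov compactness argument applied to a sequence of near-extremal trajectories. You are also correct to flag the finiteness of $v$ (equivalently, the uniform boundedness of the fundamental matrices when $\rho(\mathcal{M})=0$ under irreducibility) as the genuinely nontrivial step; this is precisely the content of Barabanov's original argument and of Wirth's treatment, and you rightly do not pretend it follows from a soft dichotomy.

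One minor point worth tightening: in your positivity argument you pass from ``$V_0$ is invariant under $e^{tA}$ for $t\geq 0$'' to ``invariant under each $A\in\mathcal{M}$''. This is fine, but make explicit that a subspace invariant under $e^{tA}$ for all small $t>0$ is invariant under $A$ (differentiate at $t=0$, or use that $A=\lim_{t\to 0^+}(e^{tA}-I)/t$ preserves the closed subspace). Otherwise the sketch is sound.
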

\begin{definition}\label{def-ext} In the following, we list several definitions (see for instance \cite{Wirth}).
\begin{description}
\label{bnorm}
\item[-] A norm on $\mathbb{R}^n$ satisfying Conditions $1.$ 
and~$2.$ of Theorem~\ref{th1} is called 
a \textit{Barabanov norm}. Given such a norm $v(\cdot)$ we denote by $S:=\{x\in\mathbb{R}^n:v(x)=1\}$ the corresponding Barabanov unit sphere.
\item[-]  Given a Barabanov norm $v(\cdot)$ a solution $x(\cdot)$ of~\eqref{sys0} is said to be $v$\textit{-extremal} (or simply \textit{extremal} whenever the choice of the Barabanov norm is clear) if $v(x(t))=v(x(0))$ for every $t\geq0$.
\item[-] For a norm $w(\cdot)$ on $\mathbb{R}^n$ and  $x\in\mathbb{R}^n$, we use $\partial w(x)$ to denote the sub-differential of $w(\cdot)$ at $x$, that is the set of $l\in\mathbb{R}^n$ such that $l^T x=w(x)$ and $l^T y\leq w(y)$ for every $y\in\mathbb{R}^n$. {This is equivalent to saying that $l^T (y-x)\leq w(y)-w(x)$ for every $y\in\mathbb{R}^n$.}
\item[-] We define the $\omega$-limit set of a trajectory $x(\cdot)$ as follows:
$$
\omega(x(\cdot)):=\{\bar x\in\mathbb{R}^n:\exists (t_n)_{n\geq1}~\text{such that}~ t_n\to+\infty~\text{and}~x(t_n)\to\bar x~\text{as}~n\to+\infty\}.
$$
\end{description}
\end{definition}

\begin{remark}\label{classic}
It is easy to show that, given any trajectory of~\eqref{sys0}, its $\omega$-limit set $\omega(x(\cdot))$ is a non empty, compact and connected subset of $\mathbb{R}^3$. Indeed the argument is identical to the standard reasoning for trajectories of a regular and complete vector field in finite dimension. 
\end{remark}

In this paper we will be concerned with the study of properties of Barabanov norms and extremal trajectories. Thus we will always assume that $\M$ is irreducible and $\rho(\mathcal{M})=0$.

As stressed in the introduction, the study of extremal trajectories in the case $\rho(\mathcal{M})=0$ turns out to be useful for the analysis of the dynamics in the general case. Note that in the case in which $\rho(\mathcal{M})=0$ and $\M$ is reducible the system could even be unstable. A description of such instability phenomena has been addressed in~\cite{chitour}.

In the next proposition, we consider a norm which is dual to $v$ and gather its basic 
properties.
\begin{proposition}\label{prop-v*}(see \cite{Rock})
Consider the function $v^*$ defined on $\mathbb{R}^n$ as follows
\begin{equation}\label{v*}
v^*(l):=\max_{y\in S}{l^T y}.
\end{equation}
Then, the following properties hold true.
\begin{description}
\item[$(1)$] The function $v^*(\cdot)$ is a norm on $\mathbb{R}^n$.
\item[$(2)$] $S^0=\underset{x\in S}\cup\partial v(x)$, where  we use $S^0$ to denote  the unit sphere of $v^*$ (i.e. the polar of $S$). 
\item[$(3)$] For $x\in S$ and $l\in S^0$, we have that $l\in\partial v(x)$ if and only if $x\in\partial v^*(l)$.
\end{description}
\end{proposition}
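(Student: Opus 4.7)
The plan is to recognize that $v^*$ is exactly the support function of the unit ball $B := \{y\in\mathbb{R}^n : v(y)\le 1\}$ of $v$, and to derive all three statements from standard convex-analytic facts about support functions of symmetric convex bodies. Since Theorem~\ref{th1} ensures $v$ is a norm, $B$ is a compact convex symmetric set with $0$ in its interior; hence $S=\partial B$ and, by convexity and positive homogeneity, $\max_{y\in S} l^T y = \max_{y\in B} l^T y$, so that $v^*$ coincides with the support function of $B$.

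For (1), I would check directly from this observation the three axioms of a norm. Positive homogeneity and sub-additivity follow immediately from $v^*(\lambda l)=\max_{y\in S}(\lambda l)^T y$ together with the symmetry $S=-S$ (which gives the absolute value) and from the inequality $\max_{y}(l_1+l_2)^Ty \le \max_y l_1^T y + \max_y l_2^T y$. Positive-definiteness uses that $B$ has nonempty interior: if $l\neq 0$, pick any $y_0$ with $l^T y_0>0$ inside $B$; rescaling to the boundary gives $v^*(l)>0$.

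For (2), I would invoke the characterization of the sub-differential of a convex positively homogeneous function. For $x\in S$, the definition $l \in \partial v(x)$ gives simultaneously $l^T x = v(x)=1$ and $l^T y \le v(y)$ for every $y$; by homogeneity the latter inequality is equivalent to $l^T y \le 1$ for all $y\in S$, i.e.\ $v^*(l)\le 1$. Combined with $l^T x = 1$, this forces $v^*(l)=1$, so $l\in S^0$. Conversely, if $l\in S^0$ then $v^*(l)=1$ and, since $S$ is compact, the max in \eqref{v*} is attained at some $x\in S$, yielding $l^T x=1$; reversing the homogeneity argument shows $l^T y \le v(y)$ for all $y$, hence $l\in\partial v(x)$. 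This proves $S^0=\bigcup_{x\in S}\partial v(x)$.

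For (3), I would apply the bipolar theorem to the symmetric compact convex body $B$, which gives $B^{00}=B$ and hence $v^{**}=v$. Therefore the roles of $v$ and $v^*$ are symmetric, and the argument used in (2) applies verbatim with $v^*$ replacing $v$: for $l\in S^0$, the relation $x\in\partial v^*(l)$ is equivalent to $x\in S$ together with $l^T x = 1$. Since (2) has shown that $l\in\partial v(x)$ is equivalent to $l\in S^0$ together with $l^T x=1$, the two conditions coincide, establishing the equivalence in (3). The only genuine technical point is the identification $v^{**}=v$; since $v$ is already a norm (whose unit ball is closed, convex, symmetric and contains $0$ in its interior), this is the standard bipolar theorem in $\mathbb{R}^n$ rather than an obstacle specific to the switched-system context.
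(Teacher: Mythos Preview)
Your argument is correct and is precisely the standard convex-analytic derivation of these facts. Note that the paper does not supply its own proof of this proposition: it simply refers the reader to \cite{Rock}, so there is no in-paper argument to compare with. Your write-up is a faithful unpacking of what one finds in Rockafellar (support function of a symmetric convex body, characterization of subgradients of a gauge, and duality via the bipolar theorem), and nothing in it is specific to the switched-system setting---which is exactly the point the paper is making by citing a reference instead of proving it.

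One minor remark: for Item~(3) you do not actually need the full bipolar identity $v^{**}=v$. Once you know $x\in S$ and $l\in S^0$, both conditions $l\in\partial v(x)$ and $x\in\partial v^*(l)$ reduce, by the computations you already did in~(2), to the single scalar equation $l^T x=1$. So (3) follows directly from (2) and its mirror image, without invoking biduality as a separate step. This is a cosmetic simplification, not a gap.
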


\begin{remark}\label{s-conv}
 If $v$ (respectively $v^*$) is not differentiable at some $x\in S$ (respectively $l\in S^0$) then $S^0$ (respectively $S$) is not strictly convex since it contains a nontrivial segment included in $\partial v(x)$ (respectively in $\partial v^*(l)$).
\end{remark}

\subsection{Characterization of extremal trajectories}
In this section we characterize extremal trajectories by means of the Pontryagin maximum principle. Many of the subsequent results have been already established using other techniques  (see for instance~\cite{Barabanov1,Margaliot}).
\begin{definition}
Given a compact convex subset $\M$ of $\R^{n\times n}$, we define the adjoint system associated with~\eqref{sys0} as
\begin{align}
\label{ad}
\dot{l}(t)=-A^T(t)l(t),
\end{align}
where $A(\cdot)$ is any measurable function taking values in $\M$.
\end{definition}

\begin{lemma}
\label{milk}
For every solution $l(\cdot)$ of~\eqref{ad} and $t\geq0$, one has that $v^*(l(0))\leq v^*(l(t))$.
\end{lemma}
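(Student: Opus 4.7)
The plan is to exploit the duality between the primal system \eqref{sys0} and its adjoint \eqref{ad} via the fundamental matrix, and then to combine this with the non-increasing property of $v$ along trajectories given by Theorem~\ref{th1}(1).

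First, I would fix a measurable $A(\cdot)$ taking values in $\M$ and let $R(\cdot)$ denote the fundamental matrix associated with $A(\cdot)$. A direct computation shows that $t\mapsto R(t)^{-T}$ is the fundamental matrix of the adjoint system \eqref{ad}, so that the solution $l(\cdot)$ of \eqref{ad} with initial condition $l(0)$ satisfies
\begin{equation*}
l(t)=R(t)^{-T}l(0),\qquad\text{equivalently}\qquad l(0)=R(t)^{T}l(t).
\end{equation*}
In particular, for every $y\in\mathbb{R}^n$ one has $l(0)^T y = l(t)^T R(t)y$.

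Second, I would fix $t\geq 0$ and use the above identity to rewrite
\begin{equation*}
v^*(l(0))=\max_{y\in S}l(0)^{T}y=\max_{y\in S}l(t)^{T}(R(t)y).
\end{equation*}
For any $y\in S$, the curve $s\mapsto R(s)y$ is precisely the solution of \eqref{sys0} starting at $y$ with the chosen $A(\cdot)$, so Theorem~\ref{th1}(1) yields $v(R(t)y)\leq v(y)=1$, i.e. $R(t)y$ belongs to the closed unit ball of $v$.

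Third, from the very definition of the dual norm (Proposition~\ref{prop-v*}) one has the standard inequality $l^T z\leq v^{*}(l)v(z)$ for all $z,l\in\mathbb{R}^n$; applied with $l=l(t)$ and $z=R(t)y$ this gives $l(t)^{T}R(t)y\leq v^{*}(l(t))$ for each $y\in S$. Taking the maximum over $y\in S$ yields $v^{*}(l(0))\leq v^{*}(l(t))$, which is the claim.

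There is no real obstacle here: the argument is a short duality computation, once one observes that the adjoint flow is the inverse transpose of the primal flow. The only small point worth being careful about is that the inequality $v(R(t)y)\leq 1$ applies to the very same switching law $A(\cdot)$ used to build $l(\cdot)$, which is precisely what the relation $l(0)=R(t)^{T}l(t)$ requires.
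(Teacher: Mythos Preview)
Your proof is correct and follows essentially the same route as the paper's. Both arguments rest on the same three ingredients: the constancy of the pairing $l(t)^T x(t)$ when $l(\cdot)$ and $x(\cdot)$ are driven by the same switching law (which you phrase via the fundamental matrix identity $l(0)=R(t)^T l(t)$), the non-increase of $v$ along primal trajectories, and the duality inequality $l^T z\le v^*(l)v(z)$. The only cosmetic difference is that the paper fixes a single maximizer $z_0\in S$ with $v^*(l(0))=l(0)^T z_0$ and runs it forward, whereas you keep the maximum over all $y\in S$ throughout; the computations are otherwise identical.
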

\begin{proof}
Let $l(\cdot)$ be a solution of System~\eqref{ad} associated with a switching law $A(\cdot)$. By definition of $v^*(l(0))$ there exists $z_0\in S$ such that $v^*(l(0))=l(0)^T z_0$. Consider the solution $z(\cdot)$ of System~\eqref{sys0} associated with $A(\cdot)$ and such that $z(0)=z_0$. Then $l(t)^T z(t)=l(0)^T z_0$ for every $t\geq 0$. Therefore, we get that $v^*(l(0))=l(t)^T z(t)=l(t)^T \big(\frac{z(t)}{v(z(t))}\big)v(z(t))$ for every $t\geq0$. Since $v(z(t))\leq v(z_0)=1$ and $l(t)^T \big(\frac{z(t)}{v(z(t))}\big)\leq v^*(l(t))$, we conclude that $v^*(l(0))\leq v^*(l(t))$ for every $t\geq0$. 

\end{proof}

\begin{theorem}
\label{mtheorem}
Let  $x(\cdot)$ be an extremal solution of $\eqref{sys0}$ such that $x(0)=x_0$ and associated with $\hat A(\cdot)$, $T\geq0$ and $\hat{l}\in\partial v(x(T))$. Then
there exists a non-zero solution $l(\cdot)$ of~\eqref{ad} associated with $\hat A(\cdot)$ such that the following holds true:
\begin{align*}
l(t)\in\partial v(x(t)),\quad &\forall t\in[0,T],\\
\max_{A\in\mathcal{M}}{l^T(t)Ax(t)}=l^T(t)\hat A(t)x(t)&=0, \quad a.e.\ t\in  [0,T],\\
l(T)=\hat{l}.&
\end{align*}
\end{theorem}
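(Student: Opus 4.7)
The plan is to view the extremality of $x(\cdot)$ as saying precisely that $x(\cdot)$ is an optimal trajectory for the control problem of maximising $v(x(T))$ over solutions of \eqref{sys0} starting from $x_0$ (since $v$ is non-increasing along every trajectory by Theorem~\ref{th1}, while $v(x(T)) = v(x_0)$ by extremality), and then run a Pontryagin--Maximum--Principle--style argument in which $\hat l \in \partial v(x(T))$ furnishes the transversality datum. Concretely, I would define $l(\cdot)$ on $[0,T]$ as the (unique, absolutely continuous) backward solution of~\eqref{ad} for the switching law $\hat A(\cdot)$ with terminal condition $l(T) = \hat l$. Since $\hat A(\cdot)$ is measurable and bounded, $l(\cdot)$ is well-defined, and it is nowhere zero: in the nontrivial case $x_0 \neq 0$ one has $v(x(T)) > 0$, so $\hat l^T x(T) = v(x(T)) > 0$ forces $\hat l \neq 0$, and linearity of~\eqref{ad} propagates this along $[0,T]$.

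The inclusion $l(t) \in \partial v(x(t))$ for every $t \in [0,T]$ is the key structural fact and follows from the duality $\tfrac{d}{dt}(l^T z) = 0$ that holds whenever $z(\cdot)$ solves \eqref{sys0} and $l(\cdot)$ solves \eqref{ad} under the same switching $\hat A(\cdot)$. Fix $s \in [0,T]$ and $y \in \R^n$, and let $z(\cdot)$ be the solution of \eqref{sys0} under $\hat A(\cdot)$ with $z(s) = y$; then
\[
l(s)^T y = l(T)^T z(T) = \hat l^T z(T) \leq v(z(T)) \leq v(z(s)) = v(y),
\]
using $\hat l \in \partial v(x(T))$ and Theorem~\ref{th1}. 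Specialising to $y = x(s)$, both inequalities become equalities by extremality of $x(\cdot)$, so $l(s)^T x(s) = v(x(s))$, and hence $l(s) \in \partial v(x(s))$.

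The identity $l(t)^T \hat A(t) x(t) = 0$ a.e.\ is then immediate: the sub-differential inequality $l(t)^T x(t+h) \leq v(x(t+h)) = v(x(t)) = l(t)^T x(t)$ (where the middle equality uses extremality) yields $l(t)^T[x(t+h) - x(t)] \leq 0$ for every $h$ with $t+h \in [0,T]$; dividing by $h$ and letting $h \to 0^\pm$ at any Lebesgue point of $\hat A(\cdot)$ gives both inequalities $l(t)^T \hat A(t) x(t) \leq 0$ and $l(t)^T \hat A(t) x(t) \geq 0$. For the Hamiltonian maximisation I would run the standard needle variation: at a Lebesgue point $\tau \in (0,T)$ of $\hat A(\cdot)$, for $A \in \M$ and $h > 0$ small, replace $\hat A$ by the constant $A$ on $[\tau, \tau+h]$, obtaining an admissible trajectory $x_h(\cdot)$ satisfying the classical first-order expansion $x_h(T) - x(T) = h\, R(T,\tau)(A - \hat A(\tau)) x(\tau) + o(h)$, where $R(\cdot,\cdot)$ is the resolvent of $\dot x = \hat A(t) x$. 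Since $v(x_h(T)) \leq v(x(T))$ and $\hat l \in \partial v(x(T))$, one has $\hat l^T[x_h(T) - x(T)] \leq 0$. The adjoint construction gives $\hat l^T R(T,\tau) = l(\tau)^T$, so dividing by $h$ and passing to the limit yields $l(\tau)^T A x(\tau) \leq l(\tau)^T \hat A(\tau) x(\tau)$ for every $A \in \M$, which is the claimed maximum condition.

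The main technical delicacy is the first-order expansion of $x_h$ and the interchange of limit and supremum at a.e.\ $\tau$; however, this is entirely routine given compactness of $\M$, boundedness of $\hat A(\cdot)$ and Gronwall's inequality. The conceptual heart of the argument lies in the two observations that (i)~the pairing $l^T z$ is conserved along solutions of \eqref{sys0}--\eqref{ad} under a common switching, and (ii)~extremality of $x(\cdot)$ makes this pairing, at $y = x(s)$, saturate the sub-differential inequality defining $v^*$.
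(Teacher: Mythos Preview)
Your proof is correct and follows essentially the same approach as the paper. The paper packages the argument slightly differently: it formulates the linear Mayer problem of maximising $\hat l^{\,T} z$ over trajectories of~\eqref{sys0} with free final time, observes that $x(\cdot)$ is optimal (this is exactly your inequality $\hat l^{\,T}[x_h(T)-x(T)]\leq 0$), and then invokes the Pontryagin maximum principle as a black box to obtain simultaneously the adjoint equation, the maximum condition, and the vanishing of the Hamiltonian; you instead define $l(\cdot)$ directly and derive the maximum condition by an explicit needle variation and the identity $l^T\hat A x=0$ from the sub-differential inequality, but the underlying ideas and the proof of the inclusion $l(t)\in\partial v(x(t))$ are identical.
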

\begin{proof}
Let $x(\cdot)$ be as in the statement of the theorem and fix $T\geq 0$ and  $\hat{l}\in\partial v(x(T))$.
Define  $\varphi(z):=\hat{l}^T z$ for $z\in \mathbb{R}^n$. We consider the following optimal control problem in Mayer form (see for instance~\cite{Bressan})
\begin{equation}
\max {\varphi(z)},\label{pmax}
\end{equation}
among trajectories $z(\cdot)$ of \eqref{sys0} satisfying $z(0)=x_0$ and with free final time $\tau\geq 0$. 
Then, the pair $(x(\cdot),\hat A(\cdot))$ is an optimal solution of Problem~\eqref{pmax}.
Indeed, let $z(\cdot)$ be a solution of $\eqref{sys0}$ defined in $[0,\tau]$ such that $z(0)= x_0$. Since $\hat{l}\in \partial 
v(x(T))$, then $v(z(\tau))- v(x(T))\geq \hat{l}^T(z(\tau)-x(T))$. Since $v(z(\tau))\leq v(z(0))= v(x_0)=v(x(T))$, one gets 
$\hat{l}^T z(\tau)\leq\hat{l}^T x(T)$.

Consider the family of hamiltonians $h_{A}(z,l)=l^T Az$ where $(z,l)\in\mathbb{R}^{n}\times\mathbb{R}^{n}$ and $A\in\mathcal{M}$. 
Then 
the Pontryagin maximum principle ensures the existence of a nonzero Lipschitz map $l(\cdot):
\left[0,T\right]\rightarrow \mathbb{R}^n$ satisfying the following properties:
\begin{itemize}
\item[1)] $\dot{l}(t)=-\frac{\partial h_{\hat A(t)}}{\partial z}(x(t),l(t))= -\hat A^{T}(t)l(t),\quad a.e.\ t\in [0,T]$
\item[2)] $l^T(t)\hat A(t)x(t) = \max_{A\in\mathcal{M}}{l^T(t)Ax(t)} = 0,\quad a.e.\ t\in [0,T]$,
\item[3)] $l(T)=\hat{l}$.
\end{itemize}
To conclude the proof of Theorem $\ref{mtheorem}$, it is enough to show that $l(t)\in \partial v(x(t))$ for all $t\in\left[0,T\right]$. Indeed fix 
$t\in\left[0,T\right]$, $x\in \mathbb{R}^n$ and let $y(\cdot)$ be a solution of System $\eqref{sys0}$ such that $\dot{y}(\tau)= 
\hat A(\tau)y(\tau)$ with initial data $y(t)= x$. Then one has
\begin{align*}
v(x)-v(x(t))&\geq v(y(T))- v(x(t))= v(y(T))- v(x(T))
\\&\geq l^T(T)(y(T)- x(T))= l^T(t)(y(t)-x(t)),
\end{align*}
since $v(y(T))\leq v(y(t))= 
v(x)$, $l(T)\in\partial v(x(T))$ and the function $l^T(t)(y(t)-x(t))$ is constant on $[0,T]$. Hence,
$v(x)-v(x(t))\geq l^T(t)(x-x(t))$. Since $t\in\left[0,T\right]$ and $x\in\mathbb{R}^n$ are arbitrary, one deduces that $l(t)\in \partial v(x(t))$ for all $t\in\left[0,T\right]$. 

\end{proof}

A necessary condition for being an extremal for all non negative times is given in the  following result, which has already been established in \cite[Theorem 4]{Barabanov1}.
We provide here an argument based on Theorem~\ref{mtheorem}.
\begin{theorem}
\label{th2}
For every extremal solution $x(\cdot)$ of $\eqref{sys0}$ associated with a switching law $A(\cdot)$,
there exists a nonzero solution $l(\cdot)$ of~\eqref{ad} associated with $A(\cdot)$ such that 
\begin{align}
\label{c3}
l(t)&\in\partial v(x(t)),\quad \forall t\geq 0,\\
\label{c4}
\max_{A\in\mathcal{M}}{l^T(t)Ax(t)}&=l^T(t)A(t)x(t)=0,\quad a.e.\ t\geq 0.
\end{align}
\end{theorem}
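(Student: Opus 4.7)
The plan is to derive this infinite-horizon statement as a limit of the finite-horizon Theorem~\ref{mtheorem} via a compactness argument on initial covectors, combined with continuous dependence of solutions of~\eqref{ad} on initial data. First, I would fix a sequence $T_n\to+\infty$ and for each $n$ pick some $\hat l_n\in\partial v(x(T_n))$ (which is nonempty since $v$ is a norm). Applying Theorem~\ref{mtheorem} on $[0,T_n]$ with terminal covector $\hat l_n$ produces a nonzero Lipschitz solution $l_n(\cdot):[0,T_n]\to\R^n$ of~\eqref{ad} associated with $A(\cdot)$ such that $l_n(t)\in\partial v(x(t))$ for every $t\in[0,T_n]$, the Hamiltonian maximization condition holds almost everywhere, and $l_n(T_n)=\hat l_n$.

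The key step is to extract a convergent, nonzero limit from the sequence $\{l_n(0)\}$. Since $\dot x=A(t)x$ and $\dot l_n=-A^T(t)l_n$, the scalar function $t\mapsto l_n(t)^T x(t)$ is constant on $[0,T_n]$, hence
\[
l_n(0)^T x(0)=l_n(T_n)^T x(T_n)=\hat l_n^T x(T_n)=v(x(T_n))=v(x(0)),
\]
where the penultimate equality uses $\hat l_n\in\partial v(x(T_n))$ and the last uses extremality of $x(\cdot)$. Assuming $x(0)\neq 0$ (otherwise $x\equiv 0$ and any solution of~\eqref{ad} trivially satisfies the conclusion), Cauchy--Schwarz yields $\|l_n(0)\|\geq v(x(0))/\|x(0)\|$, bounding $\{l_n(0)\}$ away from zero. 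For the upper bound, any $l\in\partial v(y)$ with $v(y)>0$ satisfies $v^*(l)=1$ directly from the definitions of subdifferential and of $v^*$ (take $z=y/v(y)\in S$ to get $v^*(l)\geq 1$, and $l^Tz\leq v(z)=1$ for every $z\in S$ to get the converse inequality), so Lemma~\ref{milk} gives $v^*(l_n(0))\leq v^*(\hat l_n)=1$. Passing to a subsequence I obtain $l_n(0)\to l_0\neq 0$, and I then define $l(\cdot)$ to be the solution on $[0,+\infty)$ of~\eqref{ad} associated with $A(\cdot)$ starting from $l_0$. Continuous dependence of linear ODEs on initial data yields $l_n(\cdot)\to l(\cdot)$ uniformly on compact subsets of $[0,+\infty)$.

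Finally, closedness of the convex set $\partial v(x(t))$ together with $l_n(t)\in\partial v(x(t))$ for $n$ sufficiently large immediately gives $l(t)\in\partial v(x(t))$ for every $t\geq0$. For each fixed $A\in\M$ the map $s\mapsto l_n(s)^T Ax(s)$ is continuous, so the almost-everywhere inequality $\leq 0$ extends to all of $[0,T_n]$ by continuity; passing to the limit yields $l(t)^T Ax(t)\leq 0$ for every $t\geq 0$ and every $A\in\M$. The identity $l_n(s)^T A(s)x(s)=0$ almost everywhere on $[0,T_n]$, combined with pointwise convergence of $l_n$, then produces $l(s)^T A(s)x(s)=0$ almost everywhere on $[0,+\infty)$, by restricting to a compact interval $[0,N]$ and letting $N\to\infty$. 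The only genuine obstacle is ensuring that $l_0\neq 0$: the conservation identity $l_n(0)^T x(0)=v(x(0))$, which combines the adjointness of~\eqref{sys0} and~\eqref{ad} with the terminal subdifferential condition, is precisely what prevents the initial covectors from collapsing in the limit.
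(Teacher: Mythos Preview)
Your proof is correct and follows the same strategy as the paper: apply Theorem~\ref{mtheorem} on $[0,T_n]$ for $T_n\to\infty$, extract a convergent subsequence of initial covectors, and pass to the limit using uniform convergence on compacts. The paper's compactness step is more direct than yours: since you already record that $l_n(0)\in\partial v(x(0))$, and $\partial v(x(0))$ is a compact set not containing $0$ (any $l\in\partial v(x(0))$ satisfies $l^T x(0)=v(x(0))>0$), the detour through the conservation law and Lemma~\ref{milk} is unnecessary.
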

\begin{proof}
Let $x(\cdot)$ be an extremal solution of System $\eqref{sys0}$ with some switching law $A(\cdot)$. For every $k\in\mathbb{N}$, let $\hat{l}_{k}\in\partial v(x(k))$ and $l_k(\cdot)$ solution of $\eqref{ad}$ associated with $A(\cdot)$  defined in Theorem $\ref{mtheorem}$. Since the sequence $l_k(0)\in\partial v(x(0))$ for all $k$ and $\partial v(x(0))$ is compact then up to subsequence $l_k(0)\rightarrow l_*\in\partial v(x(0))$. We then consider the solution $l(\cdot)$ of $\eqref{ad}$ associated with $A(\cdot)$ and such that $l(0)=l_*$. The sequence of solutions $l_k(\cdot)$ converges uniformly to $l(\cdot)$ on every compact time interval of $\mathbb{R}_+$. Hence by virtue of the compactness of $\partial v(x(t))$  and the fact that $l_k(t)\in\partial v(x(t))$ for every $t$ and $k$ sufficiently large, we get $l(t)\in\partial v(x(t))$.\\
For what concerns Eq.$\eqref{c4}$ notice that for $k$ sufficiently large, for every $t$ and for every $A\in\mathcal{M}$ we have ${l_k}^T(t)Ax(t)\leq {l_k}^T(t)A(t)x(t)= 0$. Then passing to the limit as $k\rightarrow+\infty$ we get that $l^T(t)Ax(t)\leq l^T(t)A(t)x(t)= 0$, which concludes the proof of the theorem. 

\end{proof}

\begin{remark}\label{adj11}
Note that along an extremal trajectory $x(\cdot)$, the curve $l(\cdot)$ defined previously takes values in $S^0$ thanks to Item $(2)$
of Proposition~\ref{prop-v*}.
\end{remark}

We now introduce an assumption on the set of matrices that will be useful in the sequel. 
\begin{definition}[\textbf{Condition G}]\label{condG}
For every non-zero $x_0\in\mathbb{R}^n$ and $l_0\in\mathbb{R}^n$, the solution $\big(x(\cdot),l(\cdot)\big)$ of $\eqref{sys0}$-$\eqref{ad}$ such that $\dot{x}(t)=A(t)x(t)$, $\dot{l}(t)=-A^T(t)l(t)$, $\big(x(0),l(0)\big)=(x_0,l_0)$ for some $A(\cdot)$ and satisfying $\max_{A\in\mathcal{M}}{l^T(t)Ax(t)}=l^T(t)A(t)x(t)$ for every $t\geq0$ is unique.
\end{definition}
\begin{remark} Barabanov introduced in~\cite{Barabanov2} a similar but slightly stronger condition referred as \textit{Condition~C}. Indeed, this condition requires a uniqueness property not only for solutions of $\eqref{sys0}$-$\eqref{ad}$ (as in Condition~G above) but also when $\M$ is replaced by $\mathcal{M}^T:=\{A^T: A\in\mathcal{M}\}$.
\end{remark}
In the remainder of the section, we establish  results dealing with regularity properties of Barabanov norms and the uniqueness of an extremal trajectory associated with a given initial point of $S$. Note that most of these results are either stated or implicitly used in 
\cite{Barabanov2} without any further details.

\begin{proposition}
\label{p3}
Let $x(\cdot)$ be an extremal solution of~\eqref{sys0} starting at some point of differentiability $x_0$ of $v(\cdot)$. Then the following results hold true:
\begin{enumerate}
\item The norm $v(\cdot)$ is differentiable at $x(t)$ for every $t\geq0$.
\item The solution $l(\cdot)$ of $\eqref{ad}$ satisfying Conditions $\eqref{c3}$-$\eqref{c4}$ of Theorem $\ref{th2}$ is unique and $l(t)=\nabla v(x(t))$ for $t\geq0$.
\item If Condition $G$ holds true, then $x(\cdot)$ is the unique extremal solution of $\eqref{sys0}$ starting at $x_0$.
\end{enumerate}
\end{proposition}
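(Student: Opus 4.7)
The plan is to exploit the fact that for a fixed extremal trajectory $x(\cdot)$ driven by a switching law $\hat A(\cdot)$, the adjoint equation~\eqref{ad} associated with $\hat A(\cdot)$ is a linear ODE, and hence its solutions are uniquely determined by their value at any one time. Theorem~\ref{mtheorem} provides, for each $T\geq 0$ and each $\hat l\in\partial v(x(T))$, an adjoint trajectory $l(\cdot)$ on $[0,T]$ associated with $\hat A(\cdot)$ whose terminal value is $\hat l$ and which satisfies $l(t)\in\partial v(x(t))$ for every $t\in[0,T]$; in particular $l(0)\in\partial v(x_0)$.

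For Part~1, fix $T\geq 0$ and pick $\hat l_1,\hat l_2\in\partial v(x(T))$. The corresponding adjoint solutions $l_1(\cdot),l_2(\cdot)$ furnished by Theorem~\ref{mtheorem} both satisfy $l_i(0)\in\partial v(x_0)=\{\nabla v(x_0)\}$, by the assumed differentiability of $v$ at $x_0$. Being solutions of the same linear Cauchy problem for~\eqref{ad} driven by $\hat A(\cdot)$ with identical initial data, they coincide on $[0,T]$, whence $\hat l_1=l_1(T)=l_2(T)=\hat l_2$. Thus $\partial v(x(T))$ is a singleton, which is precisely differentiability of $v$ at $x(T)$.

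For Part~2, by Theorem~\ref{th2} any solution $l(\cdot)$ of~\eqref{ad} associated with $\hat A(\cdot)$ and satisfying~\eqref{c3}--\eqref{c4} takes values in $\partial v(x(t))$, which by Part~1 reduces to the singleton $\{\nabla v(x(t))\}$; this gives simultaneously the explicit formula $l(t)=\nabla v(x(t))$ and the desired uniqueness. For Part~3, suppose $y_1(\cdot)$ and $y_2(\cdot)$ are two extremal trajectories starting at $x_0$, associated with switching laws $A_1(\cdot),A_2(\cdot)$. Each admits, by Theorem~\ref{th2}, an adjoint $\lambda_i(\cdot)$ satisfying~\eqref{c3}--\eqref{c4}, and Part~2 applied to $y_i$ forces $\lambda_i(0)=\nabla v(x_0)$. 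The two pairs $(y_i(\cdot),\lambda_i(\cdot))$ therefore solve the Hamiltonian maximization system appearing in Condition~G with common initial datum $(x_0,\nabla v(x_0))$, and Condition~G immediately yields $y_1\equiv y_2$.

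The only conceptual subtlety is to remember that in Theorem~\ref{mtheorem} the adjoint is propagated by the \emph{same} switching law $\hat A(\cdot)$ driving the extremal; this is what reduces the uniqueness argument in Part~1 to uniqueness for a linear Cauchy problem rather than for the full nonlinear maximized Hamiltonian system used in Condition~G. Beyond this observation, the proof is a direct chain of applications of Theorems~\ref{mtheorem} and~\ref{th2} together with the convex-analytic fact that differentiability of the norm $v$ at a point is equivalent to $\partial v$ being a singleton there.
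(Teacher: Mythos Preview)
Your proof is correct and follows essentially the same approach as the paper: pull back subdifferentials at $x(T)$ via Theorem~\ref{mtheorem} to the singleton $\partial v(x_0)=\{\nabla v(x_0)\}$, invoke uniqueness for the linear Cauchy problem~\eqref{ad}, and then combine Theorem~\ref{th2} with Condition~G for Part~3. Your closing remark on why Part~1 only needs linear ODE uniqueness (same $\hat A(\cdot)$) rather than the full Condition~G is a helpful clarification not explicit in the original.
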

\begin{proof}
We first prove Item $1$. Let $x(\cdot)$ be an extremal solution of System $\eqref{sys0}$ associated with a switching law $A(\cdot)$ such that $x(0)=x_0$. Assume that $v$ is differentiable at $x_0$. Let $T>0$, $\hat{l}_1$ and $\hat{l}_2\in\partial v(x(T))$. By virtue of Theorem $\ref{mtheorem}$, there exist non-zero Lipschitz continuous solutions $l_1(\cdot)$ and $l_2(\cdot)$ of System $\eqref{ad}$ such that 
\begin{align*}
\dot{l_k}(t)=-A^{T}(t)l_k(t),\quad &a.e.\ t\in[0,T],\\
l_k(t)\in\partial v(x(t)),\quad &\forall t\in[0,T],\\
l_k(T)=\hat{l}_k,&
\end{align*}
for $k=1,2$. Since the norm $v$ is differentiable at $x_0$, then $l_k(0)=\nabla v(x_0)$ for $k=1,2$. Therefore by uniqueness of the Cauchy problem $\dot{l}(t)=-A^T(t)l(t)$ with $l(0)=\nabla v(x_0)$ we have $l_1(T)=l_2(T)$, i.e., $\hat{l}_1=\hat{l}_2$. Hence $v$ is differentiable at $x(T)$.

We now prove Item $2$. Notice that the solutions $l(\cdot)$ of  $\eqref{ad}$ satisfying Theorem $\ref{th2}$ verify  $l(t)\in\partial v(x(t))$ for every $t\geq 0$. According to Item $1$, we have $\partial v(x(t))=\{
\nabla v(x(t))
\}$ for all $t\geq0$. Hence, $l(\cdot)$ is unique and $l(t)=\nabla v(x(t))$ for all $t\geq0$.

To conclude the proof of Proposition $\ref{p3}$, we assume that Condition $G$ holds true. Let $y(\cdot)$ be an extremal solution starting at $x_0$. By Theorem $\ref{th2}$ and by Condition $G$ applied to $(x_0,\nabla v(x_0))$, we get $x(\cdot)=y(\cdot)$.

\end{proof}

Regularity of Barabanov norms is an interesting and natural issue to address.
For $x_0\in S$ we define the subset $\mathcal{V}_{x_0}$ of $\mathbb{R}^n$ by
\begin{equation}
\mathcal{V}_{x_0}:=\left\{l \in \mathbb{R}^n : 
\begin{array}{l}
\exists t_0>0,\exists x(\cdot)~\text{extremal with } x(t_0) = x_0,\\ (t_j)_{j\geq 1}\text{s.t.}~\displaystyle{\lim_{j\rightarrow+\infty} t_j=t_0^-}\text{and }l=\lim_{j\rightarrow+\infty}\frac{x(t_j)-x_0}{t_j-t_0}
\end{array}
\right\}.
\end{equation}
\begin{remark} 
The set $\mathcal{V}_{x_0}$ can be empty since Theorem~\ref{th1} does not guarantee the existence of extremal trajectories reaching $x_0$. 
\end{remark}
The next proposition provides a sufficient condition  for the differentiability of $v(\cdot)$  at a point $x_0$.
\begin{proposition}
\label{mar}
Let $x_0\in S$. Assume that $\mathcal{V}_{x_0}$ is not empty and contains $(n-1)$ linearly independent elements. Then $v(\cdot)$ is differentiable at $x_0$.
\end{proposition}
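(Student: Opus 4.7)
The plan is to show that $\partial v(x_0)$ reduces to a single point, which for a norm is equivalent to differentiability at $x_0$. The pivotal claim I would establish is that every $\hat{l}\in\partial v(x_0)$ satisfies $\hat{l}^T w=0$ for all $w\in\mathcal{V}_{x_0}$. Once this is secured, the hypothesis that $\mathcal{V}_{x_0}$ contains $n-1$ linearly independent vectors forces $\partial v(x_0)$ into the one-dimensional subspace $\bigl(\mathrm{span}(\mathcal{V}_{x_0})\bigr)^{\perp}$; combined with the normalization $\hat{l}^T x_0=v(x_0)=1$ (which is consistent because $\partial v(x_0)$ is non-empty), this pins $\hat{l}$ uniquely.

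To prove the orthogonality claim, fix $\hat{l}\in\partial v(x_0)$ and $w\in\mathcal{V}_{x_0}$ realized by an extremal $x(\cdot)$, a time $t_0>0$ with $x(t_0)=x_0$, and a sequence $t_j\to t_0^-$. I would first obtain $\hat{l}^T w\ge 0$ directly from the definition of the subdifferential: since $v$ is constant along the extremal,
\[
\hat{l}^T\bigl(x(t_j)-x_0\bigr)\le v(x(t_j))-v(x_0)=0,
\]
and dividing by $t_j-t_0<0$ before passing to the limit yields the desired inequality.

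For the reverse inequality $\hat{l}^T w\le 0$, I would invoke Theorem \ref{mtheorem} with $T=t_0$ and final covector $\hat{l}$ to obtain a Lipschitz solution $l(\cdot)$ of the adjoint equation on $[0,t_0]$ satisfying $l(t_0)=\hat{l}$, $l(t)\in\partial v(x(t))$ for every $t$, and $l^T(t)A(t)x(t)=0$ almost everywhere. Combining the dynamics with the last identity shows that $l(t)^T x(t)$ is constant on $[0,t_0]$, hence equal to $\hat{l}^T x_0=1$. Since by Remark \ref{adj11} one has $l(t)\in S^0$, the bound $l(t)^T x_0\le v^*(l(t))v(x_0)=1$ produces $l(t)^T(x(t)-x_0)\ge 0$ throughout $[0,t_0]$. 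Evaluating at $t=t_j$, dividing by $t_j-t_0<0$, and using continuity of $l(\cdot)$ at $t_0$ gives $\hat{l}^T w\le 0$ in the limit.

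I expect the main obstacle to be locating the correct source for the reverse inequality: the subdifferential inequality alone only delivers one side, and extracting the complementary bound relies on the variational data carried by the adjoint curve supplied by the maximum principle, concretely the conservation law $l(t)^T x(t)\equiv 1$ along the extremal. Once the orthogonality $\hat{l}\perp \mathcal{V}_{x_0}$ is in hand, the reduction of $\partial v(x_0)$ to a singleton is a routine step in finite-dimensional convex geometry.
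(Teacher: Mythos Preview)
Your argument is correct. The overall scheme matches the paper's: show that every $\hat{l}\in\partial v(x_0)$ annihilates $\mathcal{V}_{x_0}$, then use the $(n-1)$ linearly independent directions together with the normalization $\hat{l}^Tx_0=1$ to pin down $\hat{l}$ uniquely. The inequality $\hat{l}^Tw\ge 0$ is obtained identically in both proofs, from the subdifferential inequality along the incoming extremal.

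The difference lies in how you obtain $\hat{l}^Tw\le 0$. The paper first proves the inclusion $\mathcal{V}_{x_0}\subset\mathcal{M}x_0$ via a Hahn--Banach separation argument (this is the bulk of their proof), and then invokes the pointwise maximality condition $l_0^TAx_0\le 0$ for all $A\in\mathcal{M}$ to conclude. You bypass the inclusion entirely: you run the adjoint curve $l(\cdot)$ of Theorem~\ref{mtheorem} backward from $\hat{l}$, use the conservation $l(t)^Tx(t)\equiv 1$ (which, incidentally, follows already from the adjoint dynamics alone, or simply from $l(t)\in\partial v(x(t))$ and $v(x(t))=1$, without the maximality identity), and combine it with $l(t)^Tx_0\le v^*(l(t))=1$ to get $l(t)^T(x(t)-x_0)\ge 0$, hence $\hat{l}^Tw\le 0$ after dividing and passing to the limit. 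Your route is slightly more economical for the proposition at hand; the paper's route has the side benefit of establishing the structural fact $\mathcal{V}_{x_0}\subset\mathcal{M}x_0$, which may be of independent interest.
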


\begin{proof}
{
We first show that $\mathcal{V}_{x_0}\subset\mathcal{M}x_0:=\{Ax_0:A\in\mathcal{M}\}$. Indeed, assume by contradiction that there exists $y\in\mathcal{V}_{x_0}$ and $y\notin\mathcal{M}x_0$. Thus, by virtue of Hahn-Banach theorem, there exist $w\in\mathbb{R}^n$ and $\eta\in\mathbb{R}$ such that  $ w^T y<\eta<w^T Ax_0$ for all $A\in\mathcal{M}$. Thanks to the definition of $\mathcal{V}_{x_0}$ there exist $t_0>0$ and an extremal solution
$x(\cdot)$ of $\eqref{sys}$ for some switching law $A(\cdot)$ such that $x(t_0)=x_0$ and a sequence $(t_j)_{j\geq1}$ such that $t_j\rightarrow t_0^-$ as $j$ goes to infinity satisfying 
$y=\lim_{j\rightarrow+\infty}\frac{x(t_j)-x_0}{t_j-t_0}$.
We also have 
$$
\frac{x(t_j)-x_0}{t_j-t_0}=\frac{1}{t_j-t_0}\int_{t_0}^{t_j}{A(\tau)x(\tau)d\tau}=\frac{1}{t_j-t_0}\int_{t_0}^{t_j}{A(\tau)(x(\tau)-x_0)d\tau}+\frac{1}{t_j-t_0}\int_{t_0}^{t_j}{A(\tau)x_0d\tau},
$$ 
and 
$$
\lim_{j\rightarrow+\infty}\frac{1}{t_j-t_0}\int_{t_0}^{t_j}{A(\tau)(x(\tau)-x_0)d\tau}=0,
$$
so that $y=\lim_{j\rightarrow+\infty}\frac{1}{t_j-t_0}\int_{t_0}^{t_j}{A(\tau)x_0d\tau}$. Notice that for each $\tau\in(t_j,t_0)$, $w^T A(\tau)x_0>\eta$. Hence, by integrating and passing to the limit as $j$ goes to infinity, we get $w^T y\geq\eta$, leading to a contradiction. Thus $\mathcal{V}_{x_0}\subset\mathcal{M}x_0$.

Let us now check that if $l_0\in\partial v(x_0)$ and $y\in\mathcal{V}_{x_0}$ then $l_0^T y=0$. By Theorem~\ref{mtheorem} one has that   $l_0^T Ax_0\leq 0$ for each $A\in\mathcal{M}$, and in particular $l_0^T y\leq 0$. For the opposite inequality it is enough to observe that $l_0^T(x(t)-x_0)\leq v(x(t))-v(x_0)=0$ for any extremal trajectory $x(\cdot)$ and thus $l_0^T\big(\frac{x(t)-x_0}{t-t_0}\big)\geq 0$ for $t<t_0$. The desired inequality is obtained passing to the limit along the sequence $(t_j)_{j\geq 1}$.

Finally, under the assumptions of Proposition~\ref{mar}, there is a unique vector $l_0$ satisfying $l_0^T y=0$ for every $y\in\mathcal{V}_{x_0}$ and such that $l_0^Tx_0=v(x_0)=1$.
Hence $\partial v(x_0)=\{l_0\}$ and the differentiability of $v(\cdot)$ at $x_0$ is proved.
}

\end{proof}
Notice that in the particular case $n=2$ the previous result states that $v(\cdot)$ is differentiable at any point reached by an extremal trajectory. For $n=3$ differentiability at $x_0$ is instead guaranteed as soon as two extremal trajectories reach $x_0$ from two different directions. If the two incoming extremal trajectories reach $x_0$ 
with the same direction, we can still prove differentiability of $v$ at $x_0$ under the additional assumption that $\mathcal {M}$ is made of non-singular matrices, as shown below.

\begin{definition}
We say that two extremal solutions $x_1(\cdot)$ and $x_2(\cdot)$ of System~\eqref{sys0} intersect each other if there exist $t_1,t_2>0$ and $\epsilon>0$ such that $x_1(t_1)=x_2(t_2)$ and $x_1(s_1)\neq x_2(s_2)$ for every $s_1\in[t_1-\epsilon,t_1)$ and $s_2\in[t_2-\epsilon,t_2)$.
\end{definition}

\begin{proposition}
\label{p4} 
Assume that $n=3$ and every matrix of $\mathcal{M}$ is non-singular. If two extremal solutions $x_1(\cdot)$ and $x_2(\cdot)$ of~\eqref{sys0} intersect each other at some $z\in\mathbb{R}^3$, then $v(\cdot)$ is differentiable at $z$. If Condition $G$ holds, one has forwards uniqueness for extremal trajectories starting from $z$. 
\end{proposition}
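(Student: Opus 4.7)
The plan is to reduce Item~1 to Proposition~\ref{mar} by producing two linearly independent vectors in $\mathcal V_z$, and then to get Item~2 for free from Proposition~\ref{p3}(3).

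First, from each of the two intersecting trajectories I extract an incoming tangent in $\mathcal V_z$. By the argument in the proof of Proposition~\ref{mar}, any such element is of the form $A z$ with $A\in\mathcal M$, and by the non-singularity hypothesis we have $Az\neq 0$ since $z\neq 0$. Thus I obtain $y_1=A_1 z$ and $y_2=A_2 z$ in $\mathcal V_z$, both non-zero. If $y_1$ and $y_2$ are linearly independent, Proposition~\ref{mar} directly yields the differentiability of $v$ at $z$, and we are done.

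The remaining case is $y_1=\mu y_2$ for some $\mu\in\mathbb R^*$. I would split it according to the sign of $\mu$. If $\mu<0$, then $A_1 z+|\mu|A_2 z=0$, so the convex combination $\widehat A:=(A_1+|\mu|A_2)/(1+|\mu|)$ lies in $\mathcal M$ (by convexity) and satisfies $\widehat A z=0$, contradicting the assumption that every element of $\mathcal M$ is non-singular. Hence the antiparallel sub-case is ruled out entirely. If $\mu>0$, I argue by contradiction: assume $v$ is not differentiable at $z$, so $\partial v(z)$ contains a non-trivial segment $[l^-,l^+]$. The proof of Proposition~\ref{mar} shows that every $y\in\mathcal V_z$ is annihilated by every $l\in\partial v(z)$; combined with the maximum-principle inequality $l^T Az\le 0$ for all $A\in\mathcal M$, this forces both $A_1$ and $A_2$ to realise the maximum of $l\mapsto l^T Az$ uniformly on $[l^-,l^+]$, and all elements of $\mathcal V_z$ to lie on the 1-dimensional line $\mathrm{span}(l^-,l^+)^\perp$.

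To obtain a contradiction I use the genuine content of the intersect hypothesis: $x_1$ and $x_2$ are distinct on $[t_1-\varepsilon,t_1)\times[t_2-\varepsilon,t_2)$. Non-singularity of $\mathcal M$ makes the fundamental matrices of the flows along $x_1,x_2$ invertible and the backward flow locally well defined. Since the two curves have parallel first-order tangents at $z$ but do not meet near $z$, a chord/intermediate-value argument on the two-dimensional sphere $S$ (using the backward flow to parameterise both curves by distance to $z$) produces sequences $s_n\to t_1^-,\sigma_n\to t_2^-$ along which an appropriate renormalisation of $x_1(s_n)-x_2(\sigma_n)$ converges to a vector of $\mathcal V_z$ transverse to $\mathrm{span}(y_1)$. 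This contradicts the conclusion that $\mathcal V_z$ is contained in a line. Once Item~1 is established, Item~2 is immediate: Proposition~\ref{p3}(3) together with Condition~G gives forward uniqueness for extremal trajectories starting at $z$.

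The main obstacle is the last step of the sub-case $\mu>0$: exhibiting explicitly the transverse element of $\mathcal V_z$ requires a delicate chord construction near $z$ that uses at once non-singularity (to invert the local flow), the convexity of $\mathcal M$, and the two-dimensional topology of $S$. Everything else is bookkeeping built on Propositions \ref{mar} and \ref{p3} together with Theorems~\ref{mtheorem}--\ref{th2}.
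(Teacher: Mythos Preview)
Your reduction to Proposition~\ref{mar} works cleanly in the easy cases. The extraction of $y_1,y_2\in\mathcal V_z$ via subsequential limits is fine (the inclusion $\mathcal V_z\subset\mathcal M z$ gives the representation $y_i=A_i z$), the linearly independent case is immediate, and your antiparallel argument is correct and neat. Item~2 via Proposition~\ref{p3}(3) is also fine.

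The genuine gap is your treatment of the parallel case $\mu>0$. The object you propose to build, a limit of renormalised chords $x_1(s_n)-x_2(\sigma_n)$, is \emph{not} an element of $\mathcal V_z$: by definition $\mathcal V_z$ consists only of limits of difference quotients $(x(t_j)-z)/(t_j-t_0)$ along a \emph{single} extremal reaching $z$. So even if your chord limit exists and is transverse to $y_1$, Proposition~\ref{mar} does not apply to it. Nor is it clear that such a chord limit is annihilated by every $l\in\partial v(z)$: from $l^T x_i(s)\le 1=l^T z$ you only get $l^T(x_i(s)-z)\le 0$, and the difference of two non-positive quantities is uncontrolled. You flag this step yourself as ``the main obstacle'' and give only a heuristic; as it stands the argument does not close.

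The paper's proof takes a different route that bypasses $\mathcal V_z$ entirely in the parallel case. Using $\rho(\mathcal M)=0$ together with non-singularity, one first shows $\mathcal M z\cap\mathbb R_+ z=\emptyset$ and then separates these two convex sets by a hyperplane $\{w^T y=\beta\}$, obtaining $w^T A z>\beta>0$ for all $A\in\mathcal M$ and $w^T z\le 0$. This makes $t\mapsto w^T x_i(t)$ strictly increasing near $t_i$, and one can enclose a small planar region $\mathcal T\subset S$ bounded by the two incoming arcs $\mathcal P_1,\mathcal P_2$ and a short arc of $\{w^T y=w^T z-\xi\}\cap S$. Any extremal starting at a point of differentiability $\bar x$ in the interior of $\mathcal T$ is forced (by the monotone linear functional $w^T(\cdot)$) to exit through $\mathcal P_1\cup\mathcal P_2$, hence to hit $x_1(\cdot)$ or $x_2(\cdot)$ before time $t_i$. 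Proposition~\ref{p3}(1) then propagates differentiability forward along that trajectory all the way to $z$. This trapping-plus-propagation argument is what replaces your missing chord construction; it uses only the topology of $S$ and the separation of $\mathcal M z$ from $\mathbb R_+ z$, and does not require producing a second independent vector in $\mathcal V_z$.
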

\begin{proof} Let $x_1(\cdot)$ respectively $x_2(\cdot)$ be two extremal solutions of $\eqref{sys0}$ associated with $A_1(\cdot)$ respectively $A_2(\cdot)$, $t_1>0$ and $t_2>0$ such that $z$:=$x_1(t_1)$=$x_2(t_2)$. Since $\rho(\mathcal{M})=0$, we claim that $\mathcal{M}z\cap\mathbb{R}_+z=\emptyset$. If it were not the case, then there would exist $\lambda\geq0$ and $A\in\mathcal{M}$ such that $Az=\lambda z$. Notice that $\lambda>0$ since $A$ is non-singular. Consider the solution of System~\eqref{sys0} $e^{At}z=e^{t\lambda}z$ for all $t\geq0$. Then $\rho(\mathcal{M})\geq\lambda>0$ contradicting $\rho(\mathcal{M})=0$. This proves the claim. Notice that the set $\mathcal{M}z$ is compact and convex while $\mathbb{R}_{+}z$ is closed and convex, then there exist $w\in\mathbb{R}^n$, $\beta\in\mathbb{R}$ such that $\lambda w^T z<\beta<w^T Az$ for all  
$A\in\mathcal{M}$ and  $\lambda\in\mathbb{R}_+$. In particular $\beta>0$, $w^T z\leq 0$ and $w^T Az>\beta>0$ for all $A\in\mathcal{M}$.  

Since $\mathcal{M}$ is compact, one deduces by a standard continuity argument that, for $i=1,2$, there exists $0\leq\eta_i\leq t_i$ such that the map $t\in(t_i-\eta_i,t_i)\to w^T x_i(t)$ is strictly increasing.
Let $\xi>0$ be sufficiently small such that $w^T z-\xi> w^T x_i(t_i-\eta_i)$ for $i=1,2$. Then, there exists a unique $(\tau_1,\tau_2)\in(t_1-\eta_1,t_1)\times (t_2-\eta_2,t_2)$ such that $w^T z-\xi= w^T x_i(\tau_i)$ for $i=1,2$. Denote by $I:=\{x\in S: w^T x=w^T z-\xi\}$, $\mathcal{C}$ the shortest arc in $I$ joining $x_1(\tau_1)$ and $x_2(\tau_2)$, $\Gamma$ the closed curve constituted of $\mathcal{C}$, $\mathcal{P}_1$ and $\mathcal{P}_2$ where $\mathcal{P}_i:=\{x_i(t):t\in[\tau_i,t_i]\}$ for $i=1,2$ and finally we denote  by $\mathcal{T}$ the union of $\Gamma$ and its interior.

We next claim that if $\bar{x}$ is a point of differentiability of $v(\cdot)$ in $\mathcal{T}\setminus\Gamma$ and $\bar{\gamma}(\cdot)$ is an extremal solution of System $\eqref{sys0}$ such that $\bar{\gamma}(0)=\bar{x}$, then $\bar{\gamma}$ intersects $\mathcal{P}_1\cup\mathcal{P}_2$ (see Figure~\ref{preuve1}). To see that, first notice that there exists some $\bar{t}>0$ such that $\bar{\gamma}(\bar{t})\in\Gamma$. (Indeed, otherwise $\bar{\gamma}(t)\in \mathcal{T}\setminus\Gamma$ for all $t\geq 0$, which is not possible since the map $t\rightarrow w^T\bar{\gamma}(t)$ is uniformly strictly increasing in $\mathcal{T}\setminus\Gamma$.) Finally $\bar{\gamma}(\bar{t})\notin\mathcal{C}$ since $w^T\bar{\gamma}(\bar{t})> w^T\bar{\gamma}(0)\geq w^T z-\xi$. This proves that $\bar{\gamma}(\bar{t})\in \mathcal{P}_1\cup\mathcal{P}_2$.

We conclude the proof of Proposition~\ref{p4} by assuming that $\bar{\gamma}(\bar{t})\in\mathcal{P}_1$. Then $\bar{\gamma}(\bar{t})=x_1(\bar{t}_1)$ for some $\bar{t}_1\in(\tau_1,t_1)$. Since $v(\cdot)$ is differentiable at $\bar{x}$ then by Proposition $\ref{p3}$ $v(\cdot)$ is differentiable at $\bar{\gamma}(\bar{t})$. In others word $v(\cdot)$ is differentiable at $x_1(\bar{t_1})$ implying again that $v(\cdot)$ is differentiable at $x_1(t_1)=z$.

\end{proof}

\begin{figure}
\centering
\input{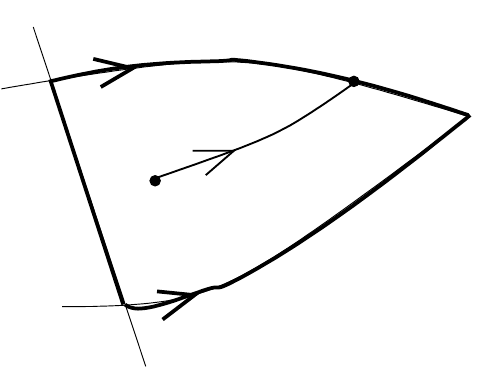_t}
\caption{Proof of Proposition~\ref{p4}}\label{preuve1}
\end{figure}
We deduce from the above proposition the following corollary, which will be used several times in the sequel.
\begin{corollary}
\label{c2}
Assume that the hypotheses of Proposition~\ref{p4} and Condition $G$ hold true. Let $\Gamma$ be a cycle on $S$ (i.e. the support of a periodic trajectory of System~\eqref{sys0}) and let $S_1$ and $S_2$ denote the two connected components of $S\setminus\Gamma$. If $z(\cdot)$ is an extremal solution on $S$ such that $z(0)\in S_1$ then $z(t)\in S_1\cup\Gamma$ for all $t\geq0$. Moreover if $z(t_*)\in \Gamma$ for some $t_*$ then 
$z(t)\in \Gamma$ for $t\geq t_*$.
\end{corollary}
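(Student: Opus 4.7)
The plan is to combine the Jordan-curve separation of $S$ by $\Gamma$ (recall that $S$, as the boundary of the convex compact set $\{v\leq 1\}\subset\R^3$, is homeomorphic to $S^2$) with Proposition~\ref{p4} and the forward uniqueness of extremal trajectories given by Item~3 of Proposition~\ref{p3} under Condition~G. The key observation is that if an extremal trajectory starting in $S_1$ ever reaches $\Gamma$, its contact with the periodic extremal $\gamma(\cdot)$ supporting $\Gamma$ will produce an intersection in the sense of the definition preceding Proposition~\ref{p4}; that proposition then guarantees differentiability of $v(\cdot)$ at the contact point, and forward uniqueness pins the trajectory to $\Gamma$ from that time onwards.

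To establish the first claim I would argue by contradiction. Assume there exists $t_1>0$ with $z(t_1)\in S_2$ and set $t_0:=\inf\{t\in[0,t_1]\,:\,z(t)\notin S_1\}$. Since $S_1$ is open and $z(0)\in S_1$, one has $t_0>0$; since $S_2$ is also open and $z(t)\in S_1$ for $t\in[0,t_0)$, continuity forces $z(t_0)\in\Gamma$. Let $\gamma(\cdot)$ be a periodic extremal with support $\Gamma$ and pick, possibly after shifting by the period, $s_0>0$ such that $\gamma(s_0)=z(t_0)$. For every sufficiently small $\epsilon>0$ and every $s_1\in[t_0-\epsilon,t_0)$, one has $z(s_1)\in S_1$, while $\gamma(s_2)\in\Gamma$ for every $s_2$, so $z(s_1)\neq\gamma(s_2)$. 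Hence $z(\cdot)$ and $\gamma(\cdot)$ intersect each other at $z(t_0)=\gamma(s_0)$. Proposition~\ref{p4} and Item~3 of Proposition~\ref{p3} (the latter invoking Condition~G) then yield differentiability of $v(\cdot)$ at $z(t_0)$ together with forward uniqueness of extremal trajectories starting from $z(t_0)$. Since $t\mapsto\gamma(t-t_0+s_0)$ is such an extremal, we conclude $z(t)=\gamma(t-t_0+s_0)$ for all $t\geq t_0$, so $z(t)\in\Gamma$ for all $t\geq t_0$, contradicting $z(t_1)\in S_2$. This proves $z(t)\in S_1\cup\Gamma$ for every $t\geq 0$.

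For the second claim I would apply the same reasoning at the first time the trajectory meets $\Gamma$. Suppose $z(t_*)\in\Gamma$ and set $\underline t:=\inf\{t\geq 0:z(t)\in\Gamma\}\in[0,t_*]$. Using the first claim and exactly the same continuity/intersection argument as above with $\underline t$ in place of $t_0$, one gets $\underline t>0$, $z(t)\in S_1$ for $t\in[0,\underline t)$, and an intersection of $z(\cdot)$ with $\gamma(\cdot)$ at $z(\underline t)$. Proposition~\ref{p4} combined with Condition~G then gives $z(t)\in\Gamma$ for every $t\geq\underline t$, hence in particular for every $t\geq t_*$. The main obstacle in the whole argument is the careful verification of the intersection condition appearing in the definition preceding Proposition~\ref{p4}; once this topological step is secured, the conclusion follows directly from the previously established results.
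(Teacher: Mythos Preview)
Your proof is correct and follows essentially the same strategy as the paper: locate the first time the extremal trajectory $z(\cdot)$ touches $\Gamma$, then apply Proposition~\ref{p4} (which already contains the forward-uniqueness conclusion under Condition~G, so the additional appeal to Proposition~\ref{p3} is harmless but redundant) to conclude that $z(\cdot)$ coincides with the periodic trajectory thereafter. The only cosmetic difference is that the paper first disposes of the case $\{z(t):t\geq 0\}\cap\Gamma=\emptyset$ by a direct connectedness argument, whereas you fold this into a contradiction; your explicit verification of the ``intersection'' hypothesis via $z(s_1)\in S_1$ and $\gamma(s_2)\in\Gamma$ is in fact a point the paper leaves implicit.
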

\begin{proof}
We first assume that $\{z(t):t\geq0\}\cap\Gamma=\emptyset$. Then $\{z(t):t\geq0\}\subset S_1\cup S_2$. Since $\{z(t):t\geq0\}$ is a connected subset of $S$ and $S_1,S_2$ are two open disjoints subset of $S$ then either $\{z(t):t\geq0\}\subset S_1$ or $\{z(t):t\geq0\}\subset S_2$. The second case is not possible since $z(0)\in S_1$ which implies that $z(t)\in S_1$ for all $t\geq0$.\\
We next assume that $\{z(t):t\geq0\}\cap\Gamma\neq\emptyset$. Then $z(t_0)\in\Gamma$ for some $t_0$. Denote by $\{x(t);t\in[0,T]\}$ a parametrization of $\Gamma$ with $T$ its period and by $t_{*}=\min\{t\in[0,t_0]: z(t)\in\Gamma\}$. Since $z(t_{*})=x(t_1)$ for some $t_1$ then by Proposition $\ref{p4}$ we have $z(t_{*}+t)=x(t_1+t)$ for all $t\geq0$. Therefore $z(t)\in S_1$ for $t\in[0,t^*)$ and $z(t)\in\Gamma$ for all $t\geq t_{*}$, which concludes the proof.

\end{proof}

\section{Open problems related to the geometry of Barabanov balls}\label{s3}
In this section, we present some open problems for which we provide partial answers. For simplicity of notations, 
we will deal with the Barabanov norm $v(\cdot)$ defined in~\eqref{norm}, although the results do not depend on the choice of a specific Barabanov norm.

\subsection{Uniqueness of Barabanov norms}
According to Theorem~\ref{th1}, there always exists at least one Barabanov norm. Moreover, it is clear that if $v(\cdot)$ is a Barabanov norm, then $\lambda v(\cdot)$ is a Barabanov norm as well for every positive $\lambda$. Therefore, uniqueness of Barabanov norms can only hold up to homogeneity. Our first question, for which a partial answer will be given later, is the following.  
\smallskip

\textbf{Open problem 1:}\textit{Under which conditions the Barabanov norm is unique up to homogeneity, i.e., for every Barabanov norm $w(\cdot)$ there exists $\lambda>0$ such that $w(\cdot)=\lambda v(\cdot)$ where $v(\cdot)$ is the Barabanov norm defined in~\eqref{norm}?}

\smallskip

In the following we provide some sufficient conditions for uniqueness. In order to state them,  we need to consider the union of all possible $\omega$-limit sets of extremal trajectories on $S$,
\begin{equation}
\label{eq:ome}\Omega:=\underset{\{x(\cdot):x(t)\in S\}\ }\cup\omega(x(\cdot)).
\end{equation}
\begin{theorem}
\label{pr1}
Assume that there exists a dense subset $\hat\Omega$ of $\Omega$ such that for every $z_1,z_2\in\hat\Omega$ , there exists an integer $N>0$ and extremal trajectories $x_1(\cdot),\dots,x_N(\cdot)$ with $z_1\in\omega(x_1(\cdot)),z_2\in \omega(x_N(\cdot))$ and $\omega(x_i(\cdot))\cap \omega(x_{i+1}(\cdot))\neq\emptyset$ for $i=1,\dots,N-1$.
Then the Barabanov norm is unique up to homogeneity.
\end{theorem}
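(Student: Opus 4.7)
My plan is to fix an arbitrary Barabanov norm $w$ and show that $w = cv$ for some constant $c > 0$. The first step is to prove that $w$ is constant on $\Omega$. Since $w$ is a Barabanov norm it is non-increasing along every trajectory of~\eqref{sys0}; hence along any $v$-extremal trajectory $x(\cdot)$ (which remains on $S$) the scalar function $w(x(t))$ is non-increasing and converges, and by continuity of $w$ it equals this limit at every point of $\omega(x(\cdot))$. Thus $w$ is constant on each of the $\omega$-limit sets appearing in~\eqref{eq:ome}. If two such sets intersect, the two constants coincide; iterating along the chain of extremal trajectories provided by the hypothesis gives $w(z_1) = w(z_2)$ for every $z_1, z_2 \in \hat\Omega$, and density plus continuity promote this to $w \equiv c$ on $\Omega$ for some $c > 0$. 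Up to replacing $w$ by $w/c$, I may assume $c = 1$, and since $v \equiv 1$ on $\Omega \subseteq S$, proving $w \equiv 1$ on $S$ combined with $1$-homogeneity will yield $w = v$ on $\mathbb{R}^n$.

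The lower bound $w(x_0) \geq 1$ on $S$ is immediate: property~$2$ of Theorem~\ref{th1} applied to $v$ produces a $v$-extremal trajectory $x(\cdot)$ issuing from $x_0$, entirely contained in $S$, hence with $\omega(x(\cdot)) \subseteq \Omega$; since $w(x(t))$ is non-increasing and converges to $1$ on this $\omega$-limit set, $w(x_0) \geq 1$. For the upper bound $w(x_0) \leq 1$ I plan to run the symmetric argument. Apply property~$2$ now to $w$ to obtain a $w$-extremal trajectory $y(\cdot)$ with $y(0) = x_0$, so that $w(y(t)) \equiv w(x_0)$. Along $y$ the function $v$ is non-increasing and converges to some $\mu \in [0,1]$, and necessarily $\mu > 0$ because otherwise $y(t) \to 0$ would contradict $w(y(t)) = w(x_0) > 0$. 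Picking $t_n \to \infty$ with $y(t_n) \to \bar z \in \omega(y(\cdot))$, I consider the shifted trajectories $y_n := y(t_n + \cdot)$. These are uniformly bounded (since $v(y(\cdot)) \leq 1$) and equicontinuous (since $\mathcal{M}$ is compact), so by Arzel\`a--Ascoli together with a diagonal argument a subsequence converges uniformly on compacts to a trajectory $\tilde y$ of~\eqref{sys0} defined on all of $\mathbb{R}$ with $\tilde y(0) = \bar z$. Passing to the limit one gets $v(\tilde y(t)) \equiv \mu$ and $w(\tilde y(t)) \equiv w(x_0)$.

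The key final step exploits the linearity of~\eqref{sys0}: the rescaled curve $\hat y := \tilde y/\mu$ is again a trajectory of~\eqref{sys0}, is $v$-extremal on $S$, and satisfies $w(\hat y(t)) \equiv w(x_0)/\mu$. Hence $\omega(\hat y) \subseteq \Omega$, and at any point of this non-empty $\omega$-limit set $w \equiv 1$, which forces $w(x_0)/\mu = 1$, i.e.\ $w(x_0) = \mu \leq 1$. Combining the two bounds gives $w \equiv 1$ on $S$, and by homogeneity $w = v$ on $\mathbb{R}^n$; the original Barabanov norm therefore equals $cv$. The step I expect to require most care is the construction of the bi-infinite limit trajectory $\tilde y$: one must know that the set of trajectories of~\eqref{sys0} (equivalently, of the linear differential inclusion $\dot x \in \mathcal{M} x$) is closed under uniform convergence on compacts, which follows from the compactness and convexity of $\mathcal{M}$ via a weak-$\ast$ limit of the associated switching laws, and one must verify that the identities $v(y(t_n+t)) \to \mu$ and $w(y(t_n+t)) \equiv w(x_0)$ indeed pass to the limit for every real $t$, including negative times. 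Aside from this compactness step, the argument is a symmetric interplay between $v$- and $w$-extremal trajectories; the chain hypothesis itself enters only once, in establishing the common value of $w$ on $\Omega$.
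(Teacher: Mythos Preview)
Your argument is correct, but the route you take for the upper bound differs from the paper's and is more laborious than necessary. The paper avoids the Arzel\`a--Ascoli/weak-$\ast$ extraction and the rescaling step entirely by a single clever choice: it sets $\bar\lambda:=\max_{x\in S} w(x)$, picks $x_0\in S$ realising this maximum, and launches the $w$-extremal $\hat x(\cdot)$ from $x_0$. Since $v$ is non-increasing along $\hat x$ while $w$ stays at its maximal value $\bar\lambda$ on $S$, the trajectory is forced to remain on $S$; thus $\hat x$ is simultaneously $v$-extremal, so $\omega(\hat x(\cdot))\subset\Omega$ immediately, and the constant value of $w$ on $\Omega$ is identified as $\bar\lambda$. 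The lower bound $w\geq\bar\lambda$ on $S$ (your step) then finishes the proof. What the paper's trick buys is brevity: no limit trajectory, no rescaling, no need to argue $\mu>0$. What your approach buys is that it treats every $x_0\in S$ uniformly and makes explicit that $w(x_0)$ equals the limiting $v$-value along the $w$-extremal from $x_0$, a byproduct the paper's argument does not yield. Note also that your bi-infinite limit on all of $\mathbb{R}$ is unnecessary: $\hat y$ restricted to $[0,\infty)$ already suffices for $\omega(\hat y)\subset\Omega$.
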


\begin{proof}
{
Let $v_1(\cdot)$ and $v_2(\cdot)$ be two Barabanov norms for System $\eqref{sys0}$. Without loss of generality we identify $v_1(\cdot)$ with the Barabanov norm $v(\cdot)$ defined by~\eqref{norm}. 
We define
 \begin{align}
\bar{\lambda}:=\min\{\lambda>0: {v_1}^{-1}(1)\subset {v_2}^{-1}(\left[0,\lambda\right])\}.\label{eq-lam}
\end{align}
Standard compactness arguments show that $\bar{\lambda}$ is well defined and 
${v_1}^{-1}(1)\cap {v_2}^{-1}(\bar{\lambda})$ is non-empty. Then consider $x_0\in {v_1}^{-1}(1)\cap {v_2}^{-1}(\bar{\lambda})$ and $\hat x(\cdot)$ a $v_2$-extremal starting at $x_0$. By \eqref{eq-lam} and monotonicity of $v_1(\hat x(\cdot))$ the support of $\hat x(\cdot)$ must be included in the set ${v_1}^{-1}(1)\cap {v_2}^{-1}(\bar\lambda)$. One therefore gets that $\omega(\hat x(\cdot))\subset {v_1}^{-1}(1)\cap {v_2}^{-1}(\bar\lambda)$.
On the other hand by monotonicity of $v_2(x(\cdot))$ along any trajectory $x(\cdot)$ of the system one has that $v_2(\cdot)$ is constant on $\omega(x(\cdot))$. Thus,  under the hypotheses of Theorem~\ref{pr1}, $v_2(\cdot)$ is constant on $\Omega$ and, since $\omega(\hat x(\cdot))\subset\Omega$,  its value is $\bar{\lambda}$.
Now, given a point $x\in v_1^{-1}(1)$, one can consider a $v_1$-extremal trajectory starting from $x$ and, since the value of $v_2(\cdot)$ at the corresponding $\omega$-limit
 is $\bar\lambda$, it turns out that $v_2(x)\geq \bar\lambda$. Equation~\eqref{eq-lam} then implies that $v_2(x)=\bar\lambda$. Hence we have proved that $v_1^{-1}(1)=v_2^{-1}(\bar\lambda)$, which concludes the proof. }

\end{proof}

\noindent From the previous result one gets the following consequence.

\begin{corollary}
\label{cy}
Assume that there exists a finite set of extremal trajectories $x_1(\cdot),\cdots,x_N(\cdot)$ on $S$ such that $\Omega=\cup_{i=1,\cdots,N}{\omega(x_i(\cdot))}$ is connected. Then the Barabanov norm is unique up to homogeneity.
\end{corollary}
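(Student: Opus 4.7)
The plan is to verify directly the hypothesis of Theorem~\ref{pr1} by taking $\hat\Omega = \Omega$ itself, which is trivially dense in $\Omega$. The only thing to check is then the existence, for any pair $z_1,z_2\in\Omega$, of a finite chain of extremal trajectories as in the statement of Theorem~\ref{pr1}. This will come from the connectedness of $\Omega$ combined with the finiteness of the covering by the $\omega$-limit sets.

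The first step is to introduce the combinatorial ``intersection graph'' $G$ with vertex set $\{1,\dots,N\}$ and an edge between $i$ and $j$ whenever $\omega(x_i(\cdot))\cap\omega(x_j(\cdot))\neq\emptyset$. The central claim is that $G$ is connected. I would argue by contradiction: if not, the vertex set splits into two non-empty subsets $I$ and $J$ with no edges between them. Since each $\omega(x_i(\cdot))$ is compact by Remark~\ref{classic}, the sets $F_I:=\bigcup_{i\in I}\omega(x_i(\cdot))$ and $F_J:=\bigcup_{j\in J}\omega(x_j(\cdot))$ are closed. The absence of edges between $I$ and $J$ forces $F_I\cap F_J=\emptyset$, while their union equals $\Omega$, contradicting the connectedness of $\Omega$.

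Once $G$ is known to be connected, the conclusion is almost immediate. Given $z_1,z_2\in\Omega$, pick indices $i_0$ and $i_k$ with $z_1\in\omega(x_{i_0}(\cdot))$ and $z_2\in\omega(x_{i_k}(\cdot))$; connectedness of $G$ provides a path $i_0,i_1,\dots,i_k$ in $G$, and by construction the extremal trajectories $x_{i_0}(\cdot),\dots,x_{i_k}(\cdot)$ satisfy the chain condition $\omega(x_{i_j}(\cdot))\cap\omega(x_{i_{j+1}}(\cdot))\neq\emptyset$ required by Theorem~\ref{pr1}. (The degenerate case $z_1,z_2\in\omega(x_{i_0}(\cdot))$ is covered by a single trajectory.) Applying Theorem~\ref{pr1} with $\hat\Omega=\Omega$ yields uniqueness of the Barabanov norm up to homogeneity.

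There is no real obstacle: the corollary is essentially a graph-theoretic reduction to Theorem~\ref{pr1}, and the only nontrivial ingredient beyond the previous theorem is the standard fact that each $\omega$-limit set is closed, which allows the disconnection argument to go through.
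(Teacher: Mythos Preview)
Your proof is correct and follows essentially the same approach as the paper's: both arguments rest on the observation that, since each $\omega(x_i(\cdot))$ is closed and $\Omega$ is connected, the index set $\{1,\dots,N\}$ cannot be split into two groups whose corresponding $\omega$-limit sets are disjoint, which then yields the chain required by Theorem~\ref{pr1}. Your phrasing via the intersection graph is a bit more transparent than the paper's hands-on inductive reordering, but the underlying idea is identical.
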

\begin{proof} Let us observe that for  any $\mathcal{I}\subset \{1,\dots,N\}$ different from $\emptyset$ and  $\{1,\dots,N\}$ one has $\big(\cup_{j\in \mathcal{I}}\omega(x_j(\cdot))\big)\cap \big(\cup_{j\notin \mathcal{I}}\omega(x_j(\cdot))\big)\neq\emptyset$. It is a consequence of the connectedness of $\Omega$ and the fact that each $\omega(x_j(\cdot))$ is closed. In particular there exists $i\notin\mathcal{I}$ such that $\big(\cup_{j\in \mathcal{I}}\omega(x_j(\cdot))\big)\cap\omega(x_i(\cdot))\neq\emptyset$. Let $z_1,z_2$ be arbitrary points of $\Omega$. By what precedes we can inductively construct a reordering $i_1,\dots,i_N$ of $1,\dots,N$ such that $z_1\in\omega(x_{i_1}(\cdot))$ and $\big(\cup_{j=1,\dots,k-1}\omega(x_{i_j}(\cdot))\big)\cap \omega(x_{i_{k}}(\cdot))\neq\emptyset$ for $k=2,\dots,N$. In particular, without loss of generality, $z_2\in\omega(x_{i_{k_1}}(\cdot))$ for some $k_1>1$, and there exists $k_2<k_1$ such that $\omega(x_{i_{k_2}}(\cdot))\cap \omega(x_{i_{k_1}}(\cdot))\neq\emptyset$. Thus we can inductively construct a finite sequence $k_1,\dots,k_L$ such that $k_{j+1}<k_j$, $\omega(x_{i_{k_{j+1}}}(\cdot))\cap \omega(x_{i_{k_j}}(\cdot))\neq\emptyset$ and $k_L=1$, and the assumptions of the proposition are then verified.

\end{proof} 

\begin{remark}
The assumptions of the previous result are verified for instance when the set $\Omega$ is formed by a unique limit cycle (but this is not the only case, as shown in the example below).
\end{remark}

\begin{example}
As in \cite[Example 1]{Barabanov2}, let $\M:=conv\{A_1,A_2\}$ 
\[A_1=\left(\begin{array}{ccc} 0 & 1 & 0 \\ -1 & 0 & 0 \\ 0 & 0 & -1\end{array}\right) \qquad A_2=\left(\begin{array}{ccc} 0 & 0 & 1 \\  0 & -1 & 0 \\ -1 & 0 & 0\end{array}\right).\]
The system is irreducible and stable but not asymptotically stable (it admits $\|x\|^2$ as a Lyapunov function and there are periodic trajectories). Thus the Barabanov norm in~\eqref{norm} is well defined. 
Note that the Barabanov sphere $S$ must contain the two circles \[\{(x_1,x_2,x_3)\in\mathbb{R}^3:x_1^2+x_2^2=1,\ x_3=0\}\ \mbox{and }\ \{(x_1,x_2,x_3)\in\mathbb{R}^3:x_1^2+x_3^2=1,\ x_2=0\}.\]

Let us see that the Barabanov norm is unique up to homogeneity. We claim that the $\omega$-limit set of any extremal trajectory is contained in the union of the circles defined above.
Since these circles coincide with the intersection of the sphere $S$ with the planes defined by $x_3=0$ and $x_2=0$ it is enough to show that $\min\{|x_2(t)|,|x_3(t)|\}$ converges to $0$ as $t$ goes to infinity.
Indeed, let $V(x):=\|x\|^2$. Then a simple computation leads to $\dot V(x(t)) \leq -\min\{x_2(t)^2,x_3(t)^2\}$ and, since $V$ is positive definite, $F(t):=\int_0^{t} \min\{x_2(\tau)^2,x_3(\tau)^2\}d\tau$ is a (monotone) bounded function. Since $\Vert x(t)\Vert\leq 1$ for all positive times, then $x(\cdot)$ is uniformly continuous as well as $F'(t)=\min\{x_2(t)^2,x_3(t)^2\}$ is uniformly continuous. Hence by Barbalat's lemma we have that $\lim_{t\to\infty} \min\{x_2(t)^2,x_3(t)^2\}=0$, which proves the claim. The hypotheses of Theorem~\ref{pr1} are then satisfied.
\end{example}

\begin{remark}
An adaptation of the above result can be made under the weaker assumption that the union of all possible $\omega$-limit sets of extremal trajectories on a Barabanov sphere is a union $\Omega\cup-\Omega$ where $\Omega$ is a connected set satisfying the assumptions of the theorem. 
However up to now we do not know any example satisfying this generalized assumption.
\end{remark}
At the light of the previous proposition the following variant of the {\bf Open Problem~1} is provided. 

\smallskip

\textbf{Open problem 2}: \textit{Is it possible to weaken the assumption of Theorem~\ref{pr1}, at least when $n=3$, by just asking that $\Omega$ is connected?}

\smallskip

Besides the cases studied in this section, the uniqueness of the Barabanov norm for continuous-time switched systems remains an open question. The main difficulty lies in the fact that Barabanov norms are usually difficult to compute, especially for systems of dimension larger than two.
For $n=2$ a simple example of non-uniqueness of the Barabanov norm is the following.

\begin{example}
Let $\M:=conv\{A_1,A_2,A_3\}$ with 
\[A_1\!=\!\left(\!\begin{array}{cc}-1 & 0 \\ 0 & 0\end{array}\!\right),~A_2\!=\!\left(\!\begin{array}{cc} 0 & 0 \\ 0 & -1\end{array}\!\right),~A_3\!=\!\left(\!\begin{array}{cc}-\alpha & 1 \\ -1 & -\alpha\end{array}\!\right).\]
It is easy to see that, taking  $\alpha\geq 1$,  any norm $v_{\beta}(x):=\max\{|x_1|,\beta |x_2|\}$ with $\beta\in[\frac1{\alpha},\alpha]$ is a Barabanov norm of the system. 
Moreover, one can show that the Barabanov norm defined in Eq.~\eqref{norm} is equal to $v_1(\cdot)$ and the corresponding $\omega$-limit set defined in Eq.~\eqref{eq:ome} reduces to the four points $(-1,0)$, $(0,1)$, $(1,0)$ and $(0,-1)$, which is clearly disconnected.
Note that  $v_{\beta}(\cdot)$ is a Barabanov norm even for the system corresponding to $conv\{A_1,A_2\}$, which is reducible.
\end{example}
\subsection{Supremum versus Maximum in the definition of Barabanov norm}
\label{sec-sup}

An important problem linked to the definition of the Barabanov norm given in \eqref{norm} consists in understanding under which hypotheses on $\mathcal{M}$ one has that, for every $y\in\mathbb{R}^n$, the  supremum in $\eqref{norm}$ is attained by a solution $x(\cdot)$ of System~\eqref{sys0}. If this is the case, these solutions must trivially be extremal solutions of System~\eqref{sys0} and then the analysis of the Barabanov norm defined in~\eqref{norm} would only depend on the asymptotic behaviour of extremal solutions of System~\eqref{sys0}.
As shown by the example and the discussion below, the above issue is not trivial at all and it  was actually overlooked in \cite{Barabanov3}, yielding a fatal gap in the main argument of that paper.

\begin{example}
Suppose that $\mathcal{M}:=conv\{A,B\}$ with 
\[A=\left(\begin{array}{ccc} 0 & 0 \\ 0 & -1\end{array}\right)\qquad B=\left(\begin{array}{ccc} \alpha & 3 \\   -0.6 &  0.7 \end{array}\right),\]
where $\alpha\sim 0.8896$ is chosen in such a way that $\rho(\mathcal{M})=0$. The latter condition is equivalent here to the fact that the trajectory of $B$ starting from the point $(-1,0)$ ``touches'' tangentially  the line $x_1=1$. In this case, it is easy to see that the closed curve constructed in Figure~\ref{fig1} by gluing together four trajectories of the system is the level set $V$ of a Barabanov norm. Moreover, the extremal trajectories of the system tend either to $(1,0)^T$ or to $(-1,0)^T$. 
On the other hand it is possible to construct trajectories of the system starting from $V$, turning around the origin an infinite number of times and staying arbitrarily close to $V$. One deduces that the Barabanov norm on $V$ defined in~\eqref{norm} is equal to the maximum of the Euclidean norm on $V$, which is strictly bigger than $1$.
\end{example}
Note that the matrix $A$ in the previous example is singular. It is actually possible to see, by using for instance the results of \bblue{\cite{BBM,Boscain}}, that for $n=2$, whenever  $\mathcal{M}:=conv\{A,B\}$, $A,B$ are non-singular and $\rho(\M)=0$, the supremum is always reached. This justifies the following question.

\smallskip

\textbf{Open problem 3:} \textit{Assume that $\mathcal{M}$ is made of non-singular $\mathbb{R}^{n\times n}$ matrices. Is it true that  for every $y\in\mathbb{R}^n$ the supremum in~\eqref{norm} is achieved?} 

\smallskip

\begin{figure}
\begin{center}
\includegraphics[scale=0.9]{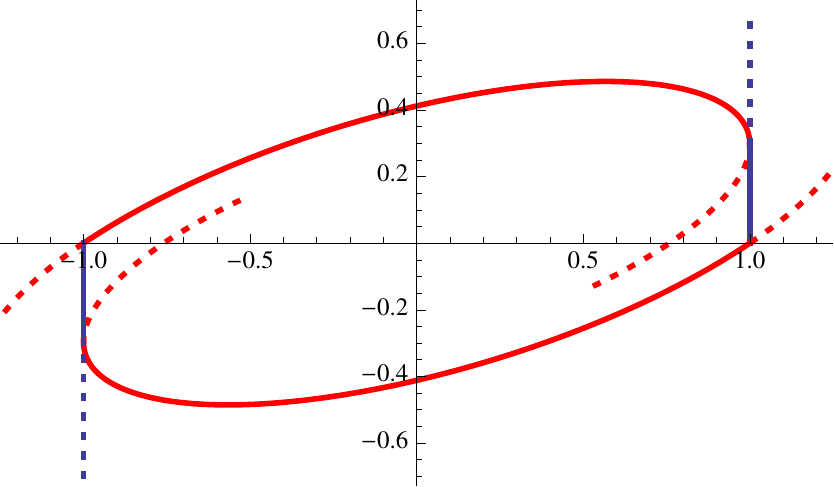}
\caption{Example where the supremum in $\eqref{norm}$ is not attained and the Barabanov norm is not strictly convex.}
\label{fig1}
\end{center}
\end{figure}

At the light of the above discussion, we can explain why the argument of \cite[Lemma 9, page 1444]{Barabanov3} presents a gap serious enough to prevent the main result of that paper to have a full valid proof. Recall first of all that \cite{Barabanov3} deals with the linear switched system {\eqref{sys0}} associated with $\mathcal{M}=conv\{A,A+bc^T\}$ where $A$ and $A+bc^T$ are $3\times 3$ Hurwitz matrices, $b,c\in \mathbb{R}^3$ such that $(A,b)$ and $(A^T,c)$ are controllable and $\rho(\mathcal{M})=0$. These assumptions imply that  $\mathcal{M}$ is irreducible. The main result of \cite{Barabanov3} states that, under the previous assumptions, there exists a central symmetric bang-bang trajectory $\gamma$ with four bang arcs (i.e. arcs corresponding to $A(t)\equiv A$ or $A(t)\equiv A+bc^T$) such that every extremal trajectory of {\eqref{sys0}} converges to $\gamma$. In the course of the argument, N. E. Barabanov cleverly introduces auxiliary semi-norms $v_m$ defined for every non zero $m\in\mathbb{R}^3$ as follows: for $x\in\mathbb{R}^3$, 
\begin{equation}\label{semi}
v_m(x)=\sup\left(\limsup_{t\rightarrow\infty} m^Tx(t)\right),
\end{equation}
where the $\sup$ is taken over all trajectories $x(\cdot)$ of {\eqref{sys0}} starting at $x$. It can be shown easily (cf. \cite[Theorem 3, page 1444]{Barabanov3}) that, for every non-zero $m\in\mathbb{R}^3$, $(i)$ $v_m$ is a semi-norm; $(ii)$ its evaluation along any trajectory of  {\eqref{sys0}} is non-increasing; $(iii)$ for every $x_0\in \mathbb{R}^3$, there exists a trajectory $x(\cdot)$ of {\eqref{sys0}} starting at $x_0$ along which $v_m$ is constant (and thus called $v_m$-extremal) as well as a trajectory $l(\cdot)$ of the adjoint system associated with $\mathcal{M}$ such that the contents of Theorem \ref{th2} hold true with $v_m$ instead of $v$ and similarly for Proposition~\ref{p3} if $v_m(x_0)>0$. 

Then comes Lemma $9$, page $1444$ in \cite{Barabanov3}. Since, on the one hand, this lemma is instrumental for the rest of Barabanov's argument  but with a gap in the proof, and the other hand, the proof given in 
\cite{Barabanov3} is rather short, we fully reproduce both the statement and the proof below.

\begin{lemma}[Lemma 9 in \cite{Barabanov3}]
\label{mar2}
Let $n=3$, $\mathcal{M}=conv\{A,A+bc^T\}$ with $b,c\in \mathbb{R}^3$ such that $(A,b)$ and $(A^T,c)$ are controllable and $\rho(\mathcal{M})=0$.
Let $m$ be a non-zero vector of $\mathbb{R}^3$. For every periodic trajectory $(x(\cdot),l(\cdot))$ of {\eqref{sys0}}, $v_m$ is constant along $x(\cdot)$ and 
\begin{equation}\label{semi1}
v_m(x(\cdot))=\max\{0,\max\ m^Tx(t),\ t\geq 0\}\}.
\end{equation}
\end{lemma}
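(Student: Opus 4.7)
The plan is to separate the claim into three parts: (a) $v_m$ is constant along $x(\cdot)$, (b) this constant dominates $\max\{0, \max_{t\geq 0} m^T x(t)\}$, and (c) it is dominated by the same quantity.

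Part (a) is immediate from the two facts already recorded just before the lemma: monotonicity of $v_m$ along every trajectory (property (ii)) forces $t \mapsto v_m(x(t))$ to be non-increasing, while periodicity of $x$ with some period $T$ forces the same function to be $T$-periodic. A non-increasing periodic function is constant.

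For part (b), I would plug two specific trajectories starting from $x(0)$ into the definition \eqref{semi}. The periodic trajectory $x$ itself yields $\limsup_{t\to\infty} m^T x(t) = \max_{s\in[0,T]} m^T x(s)$, which is exactly $\max_{t\geq 0} m^T x(t)$. The constant-switching trajectory $t\mapsto e^{At}x(0)$ tends to the origin (since $A$ is Hurwitz) and therefore contributes $0$ to the supremum. Taking the sup over these two witnesses already gives the required inequality.

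Part (c) is the substantive one, and is where I expect the main obstacle. The natural strategy, which mirrors Barabanov's, is: produce a $v_m$-extremal trajectory $\hat x$ starting at $x(0)$ that realizes the supremum in \eqref{semi}; then apply the $v_m$-analog of Proposition~\ref{p3} together with a Condition~G-type uniqueness for extremal trajectories to argue that $\hat x$ must asymptotically merge with the orbit of $x$; finally conclude $\limsup_{t\to\infty} m^T \hat x(t) \leq \max_{t\geq 0} m^T x(t)$. The difficulty is precisely the one the authors flag in Section~\ref{sec-sup}: the supremum defining $v_m$ need not be attained, so a $\hat x$ realizing it may simply fail to exist. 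A plausible workaround is to pick a sequence $(\tilde x_n)$ of trajectories almost realizing the supremum, extract subsequential $\omega$-limit points, and exploit the invariance of $\omega$-limit sets of switched systems to produce an extremal trajectory $\bar x$ starting at a point $z$ whose $m^T z$ matches the limiting value of $\limsup_{t\to\infty} m^T \tilde x_n$; the proof then reduces to showing that $\bar x$ stays on the periodic orbit of $x$. This last rigidity step fails in general without extra hypotheses on $\mathcal{M}$ (typically non-singularity together with Condition~G), and it is exactly the gap discussed in the paragraphs following Lemma~\ref{mar2}. I would therefore expect any honest proof of the statement as written to require such additional structural assumptions.
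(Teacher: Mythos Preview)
Your parts (a) and (b) are correct and essentially match what the paper records as straightforward. Your diagnosis of the gap in part (c) is also exactly the point the paper makes: the supremum defining $v_m$ need not be attained, and without that the argument collapses. So on the substantive issue you and the paper agree.

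However, the route you sketch for (c) is \emph{not} Barabanov's argument as reproduced in the paper. You propose to take a $v_m$-extremal trajectory $\hat x$ starting at $x(0)$, then use Condition~G-type uniqueness to force $\hat x$ to merge with the periodic orbit. Barabanov's actual argument is topological rather than dynamical: he uses the periodic orbit $x(\cdot)$ to split the Barabanov sphere $S$ into two closed regions $V$ and $W$, with $W$ chosen so that $\max\{m^Ty:y\in W\}\geq v_m(x(0))$; convexity of $v_m$ then forces $\max\{m^Ty:y\in V\}=\max_t m^Tx(t)<v_m(x(0))$. He next picks points $y_k\in V$ of differentiability of $v_m$ converging to $x(0)$, takes the $v_m$-extremal trajectories $x_k(\cdot)$ starting there, and uses $l_k(t)=\nabla v_m(x_k(t))$ together with the barrier provided by the periodic orbit to trap each $x_k(\cdot)$ inside $V$ for all time. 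This yields $\sup_t m^T x_k(t)\leq \max\{m^Ty:y\in V\}<v_m(x(0))$, while $v_m(y_k)\to v_m(x(0))$. The contradiction would close \emph{if} one knew $v_m(y_k)=\sup_t m^T x_k(t)$, i.e.\ if the supremum in \eqref{semi} were attained along $x_k(\cdot)$ --- and that is the unjustified step.

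So Barabanov's approach buys a clean geometric trapping argument (no need to argue about merging or $\omega$-limits), while your approach would need a separate rigidity step but is more direct about where the sup-vs-max issue enters. Both founder on the same missing ingredient, which is precisely the paper's Open Problem~3 (for $v_m$ in place of $v$).
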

Here is the argument of N. E. Barabanov, quoted verbatim: 

{\it Suppose that the assertion fails 
to be valid for some periodic solution $(x(\cdot),l(\cdot))$. Then $v_m(x(0))>0$ and 
$v_m(x(\cdot))>\max\{0,\max\ \{m^Tx(t),\ t\geq 0\}\}$. Let $V$ and $W$ denote the closed domains into which the unit sphere $S$ is divided by the curve $x(\cdot)$. Specifically, let $W$ 
be the domain for which $\max\{m^Ty,\ y\in W\}\geq v_m(x(0))$. Then $\max\{m^Ty,\ y\in V\}=\max\{m^Tx(t),\ t\geq 0\}<v_m(x(0))$,
since the function $v_m$ is convex. Let $(y_k)$, $k\geq 0$, be a sequence of points of $V$ where $v_m$ is differentiable and converging to $x(0)$; let $(x_k(\cdot),l_k(\cdot))$ be the extremal solutions starting at $y_k$ such that $v_m(x_k(\cdot))$ is constant. Then $v_m(y_k)$  tends to $v_m(x(0))$. Moreover, we have $l_k(t)=\nabla v_m(x_k(t))$; hence $x_k(t)\in V$ for all $t\geq 0$. Thus $\sup\{m^Tx_k(t),\ t\geq 0\}\leq \max\{m^Ty,\ y\in V\}<v_m(x(0))$.
The contradiction obtained proves the assertion.}

One can easily see that everything above is correct except the very last statement. The only way to get a contradiction is that $v_m(x_k(\cdot))=\sup\{m^Tx_k(t),\ t\geq 0\}$ along the sequence of trajectories $(x_k(\cdot))_{k\geq 0}$. It therefore implicitely assumes a positive answer to the {\bf Open problem~3} at $y_k$ (with $v$ replaced by $v_m$), maybe because there is a unique $v_m$-extremal trajectory starting at $y_k$. However, there is no argument in \cite{Barabanov3} towards that conclusion.

\subsection{Strict convexity of Barabanov balls}
\label{s5}In this section, we focus on the strict convexity of Barabanov balls. 
There is another noticeable feature of Example 1 given previously: the Barabanov unit ball (or, equivalently, the Barabanov norm) is not strictly convex since the Barabanov unit sphere contains segments. For that particular example, this comes from the fact that the matrix $A$ is singular. Hence we ask the following question, for which partial answers are collected below.

\smallskip

\textbf{Open problem 4:} \textit{
Assume that $\mathcal{M}$ is made by non-singular $\mathbb{R}^{n\times n}$ matrices.  Is it true that the Barabanov balls are strictly convex?}

\smallskip

We next provide a technical result needed to establish the main result of this section, Proposition~\ref{sc-prop1} below.
\begin{lemma}\label{sc-lem1}
Let $x_0,x_1\in S$ such that $x_0\neq x_1$ and $R(\cdot)$ be the {fundamental matrix} associated with some switching law $A(\cdot)$. Suppose that the open segment $(R(t)x_0,R(t)x_1)$ intersects $S$ for some 
 $t\geq 0$. Then the whole segment $[R(t)x_0,R(t)x_1]$ belongs to $S$.
 
 Moreover, letting $x_\beta:=\beta x_1+(1-\beta)x_0$, if $R(\cdot)x_{\alpha}$ is an extremal solution of System~\eqref{sys0} for some $\alpha\in(0,1)$ then  $R(\cdot)x_{\beta}$ is also an extremal solution for every $\beta\in [0,1]$.
\end{lemma}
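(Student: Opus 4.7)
The plan is to combine the convexity of the Barabanov norm $v(\cdot)$ with its monotonicity along trajectories of~\eqref{sys0}, together with the linearity identity $R(t)x_\beta=(1-\beta)R(t)x_0+\beta R(t)x_1$ for $\beta\in[0,1]$, which shows that every point of the segment $[R(t)x_0,R(t)x_1]$ is of the form $R(t)x_\beta$.

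For the first statement, monotonicity of $v$ along trajectories gives $v(R(t)x_0)\le v(x_0)=1$ and $v(R(t)x_1)\le v(x_1)=1$. Assume the open segment meets $S$ at some interior point, i.e.\ there exists $\beta^*\in(0,1)$ with $v(R(t)x_{\beta^*})=1$. Convexity of $v$ then yields
\[
1=v\bigl((1-\beta^*)R(t)x_0+\beta^*R(t)x_1\bigr)\le(1-\beta^*)v(R(t)x_0)+\beta^*v(R(t)x_1)\le 1,
\]
so equality must hold throughout. This forces both $v(R(t)x_0)=v(R(t)x_1)=1$ and the convex function $g:\beta\mapsto v(R(t)x_\beta)-1$ on $[0,1]$ to satisfy $g(0)=g(\beta^*)=g(1)=0$ and $g\le 0$ (by convexity applied to the endpoint values). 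A real convex function that is non-positive on an interval and vanishes at both endpoints and at one interior point must be identically zero, and hence $v(R(t)x_\beta)=1$ for every $\beta\in[0,1]$, i.e.\ $[R(t)x_0,R(t)x_1]\subset S$.

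For the second statement I would apply the first one at \emph{every} time. The idea is that the extremality of $R(\cdot)x_\alpha$ propagates the intersection hypothesis from a single time to all times. In detail, by hypothesis there exists $s\ge 0$ at which the open segment $(R(s)x_0,R(s)x_1)$ meets $S$; the first statement applied at time $s$ gives $v(R(s)x_\alpha)=1$, and combining this with $v(R(t)x_\alpha)=v(x_\alpha)$ for all $t\ge 0$ we obtain $v(x_\alpha)=1$, hence $x_\alpha\in S$. The first statement applied at time $0$ (where the open segment $(x_0,x_1)$ now meets $S$ at $x_\alpha$) then yields $v(x_\beta)=1$ for every $\beta\in[0,1]$. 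Moreover, for any $t\ge 0$ the point $R(t)x_\alpha$ still lies in $S$ by extremality, so the open segment $(R(t)x_0,R(t)x_1)$ meets $S$ at its interior point $R(t)x_\alpha$; applying the first statement at time $t$ yields $v(R(t)x_\beta)=1=v(x_\beta)$, which is precisely the extremality of each $R(\cdot)x_\beta$.

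The crucial step, and what I expect to be the main obstacle, is the equality-in-convexity argument in the first paragraph. What is needed is the elementary fact that a convex real-valued function defined on a segment, which is dominated by an affine function and coincides with it at an interior point, must coincide with it on the whole segment. Once this is in place, the second statement follows essentially by iterating the first one in time, with the extremality of $R(\cdot)x_\alpha$ guaranteeing that the intersection hypothesis persists for every $t\ge 0$.
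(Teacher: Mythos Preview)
Your proof is correct and follows essentially the same approach as the paper's. Both arguments combine the monotonicity of $v$ along trajectories (giving $v(R(t)x_i)\le 1$) with convexity of $v$; the paper shows $v(z_\beta)\ge 1$ by writing the known point $z\in S$ as a convex combination of $z_\beta$ and an endpoint, while you first force the endpoints onto $S$ and then invoke the elementary fact that a convex function vanishing at both endpoints and at an interior point of an interval is identically zero --- these are equivalent organizations of the same computation. Your detailed treatment of the ``Moreover'' part (which the paper dismisses as ``a trivial consequence of the first one'') is also correct: you rightly use the first hypothesis to get $v(x_\alpha)=1$, and then apply the first statement at every $t\ge 0$ via the extremality of $R(\cdot)x_\alpha$.
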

\begin{proof}
For $\beta\in(0,1)$ let us define $z_\beta:=\beta R(t)x_1+(1-\beta)R(t)x_0$. If $z\in S\cap (R(t)x_0,R(t)x_1)$ then $z=z_{\alpha}$   for some $\alpha\in (0,1)$. 
By convexity of $v$ we have that $v(z_\beta)\leq1$ for each $\beta\in(0,1)$
Let us prove that $v(z_\beta)\geq1$. Since $z_\beta\in\big(R(t)x_0,R(t)x_1\big)$, then either $z\in[z_\beta,R(t)x_1)$ or $z\in(R(t)x_0,z_\beta]$. If $z\in[z_\beta,R(t)x_1)$, then 
\[
1=v(z)\leq\delta v(z_\beta)+(1-\delta)v(R(t)x_1)\leq \delta v(z_\beta)+ 1-\delta,
\]
for some $\delta\in (0,1]$ implying that $v(z_\beta)\geq1$. The second case can be treated with the same arguments. This proves the first part of the lemma. The second part is then a trivial consequence of the first one.

\end{proof}

We can now state our main result related to the strict convexity of Barabanov balls.
\begin{proposition}\label{sc-prop1}
Assume that $\mathcal{M}$ is a convex compact irreducible subset of $\mathbb{R}^{n\times n}$,  not containing singular matrices and $\rho(\mathcal{M})=0$. Then, the intersection of the Barabanov unit sphere $S$ with any hyperplane $P$ has empty relative interior in $P$.
\end{proposition}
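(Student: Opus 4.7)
The plan is to argue by contradiction: assume $S$ contains an $(n-1)$-dimensional flat face, and use the non-singularity of matrices in $\mathcal{M}$ to derive a contradiction from the extremality equations.

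Suppose $S\cap P$ has non-empty relative interior in $P$ for some hyperplane $P$. A standard convex analysis argument forces $P$ to be a supporting hyperplane of the unit ball $B:=\{v\leq 1\}$, so I write $P=\{x\in\mathbb{R}^n:l_0^T x=1\}$ with $l_0\in S^0$, and let $F:=B\cap P=S\cap P$. By assumption $F$ is an $(n-1)$-dimensional face of $B$; at every $x\in\mathrm{relint}(F)$ the unique supporting hyperplane is $P$, so $\partial v(x)=\{l_0\}$ and $v$ is differentiable at $x$ with $\nabla v(x)=l_0$. Fix such an $x_0$ and an extremal trajectory $x(\cdot)=R(\cdot)x_0$ of \eqref{sys0} with switching law $A(\cdot)$ and fundamental matrix $R(\cdot)$. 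By Theorem~\ref{th2} and Proposition~\ref{p3}(2), the associated adjoint $l(\cdot)$ is unique, takes values in $S^0$, and satisfies $l(t)=R(t)^{-T}l_0=\nabla v(x(t))$ for every $t\geq 0$.

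The crucial step is to show $R(t)F\subset S$ for all $t\geq 0$. For any $x_1\in F$ one has
\[
l(t)^T R(t)x_1=l_0^T R(t)^{-1}R(t)x_1=l_0^T x_1=1.
\]
Combining $v^*(l(t))=1$ (Remark~\ref{adj11}) with $v(R(t)x_1)\leq 1$ (monotonicity of the Barabanov norm along trajectories), the duality estimate $l(t)^T R(t)x_1\leq v^*(l(t))\,v(R(t)x_1)$ collapses to an equality, forcing $v(R(t)x_1)=1$. Hence each $R(\cdot)x_1$ is itself an extremal trajectory driven by the same switching law $A(\cdot)$; when $x_1\in\mathrm{relint}(F)$, it starts at a differentiability point of $v$, so by Proposition~\ref{p3}(2) its adjoint coincides with $l(\cdot)$.

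To conclude, I would select $n$ points $x_0,x_1,\dots,x_{n-1}\in\mathrm{relint}(F)$ that are linearly independent as vectors of $\mathbb{R}^n$. This is possible since $\dim F=n-1$ and $l_0^T x_0=1\neq 0$: the differences $x_1-x_0,\dots,x_{n-1}-x_0$ can be chosen to form a basis of $\{l_0^T y=0\}$, and together with $x_0$ they span $\mathbb{R}^n$. Theorem~\ref{th2} applied to each extremal $R(\cdot)x_i$ yields
\[
l(t)^T A(t)R(t)x_i=0\quad\text{for}\ t\notin N_i,
\]
where each $N_i\subset[0,\infty)$ has zero measure. Outside the null set $N:=\bigcup_{i=0}^{n-1}N_i$, the row vector $l(t)^T A(t)R(t)$ annihilates a basis of $\mathbb{R}^n$ and therefore vanishes; invertibility of $R(t)$ gives $A(t)^T l(t)=0$ almost everywhere, and non-singularity of $A(t)\in\mathcal{M}$ then forces $l(t)=0$, contradicting $l(t)\in S^0$.

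The delicate step is the invariance $R(t)F\subset S$, which simultaneously uses the conjugacy identity $R(t)^T l(t)=l_0$, the saturation $v^*(l(t))=1$ along extremals, and the monotonicity of $v$. The non-singularity hypothesis intervenes only at the very end and is indispensable: the two-dimensional example in Section~\ref{sec-sup} shows that singular matrices in $\mathcal{M}$ can produce genuine $(n-1)$-dimensional faces on the Barabanov sphere.
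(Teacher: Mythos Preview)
Your proof is correct and reaches the same contradiction as the paper's, but the machinery differs in two places.

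For the propagation step $R(t)F\subset S$, the paper relies on the segment Lemma~\ref{sc-lem1}: since $x_0$ lies in the relative interior of $S\cap P$, every segment in $S\cap P$ through $x_0$ can be pushed forward into $S$ by the extremal flow. You bypass this lemma entirely and use the adjoint instead: the conserved pairing $l(t)^T R(t)x_1=l_0^T x_1=1$, combined with $v^*(l(t))=1$ and $v(R(t)x_1)\leq 1$, forces $v(R(t)x_1)=1$ directly via duality. This is arguably cleaner, as it draws only on the PMP framework already assembled in Section~\ref{s2}.

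For the contradiction, the paper argues geometrically at a single instant: since $S$ locally coincides with $P$ near any point of $\mathrm{relint}(S\cap P)$, the relation $R(t)x\in S$ for small $t$ upgrades to $R(t)x\in P$, so the velocity $A(0)x$ is tangent to $P$, i.e.\ $l_0^T A(0)x=0$ on an open cone, whence $l_0^T A(0)=0$. You instead invoke the Hamiltonian condition of Theorem~\ref{th2} together with the uniqueness of the adjoint from Proposition~\ref{p3}(2), obtaining $l(t)^T A(t)R(t)x_i=0$ for a spanning family at almost every $t$; this avoids the (implicit) step that $S=P$ locally near the face, at the price of handling the null sets $N_i$. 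Both routes land on the same equation $A^T l=0$ with $A\in\mathcal{M}$ non-singular and $l\neq 0$.
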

\begin{proof}
The conclusion is clearly true if $0\in P$. So assume  that the conclusion is false for some hyperplane $P$ not containing $0$. Let  $x_0$ be a point in the interior of $S\cap P$ admitting an extremal trajectory $R(\cdot)x_0$ where $R(\cdot)$ is the {fundamental matrix} associated with some switching law $A(\cdot)$. Without loss of generality we assume that $R(\cdot)$ is differentiable at $t=0$ with $\dot R(0)=A(0)\in \mathcal{M}$. By assumption and by Lemma~\ref{sc-lem1}  for any $x\in S\cap P$ there exists a segment  in  $S\cap P$ connecting $x$ and $x_0$ and containing $x_0$ in its interior. 
Then, again by Lemma~\ref{sc-lem1},  $R(t)x\in S \cap P$ for small $t$ and for all $x$ in the interior of $S\cap P$, which
implies that $A(0)x$ is tangent to $P$, that is $l^T A(0)x=0$ where $l$ is orthogonal to $P$. But then this is also true on a cone of $\mathbb{R}^n$ with non-empty interior and thus on the whole $\mathbb{R}^n$, which is impossible since $A(0)$ is non-singular. 

\end{proof}

{The following corollary of Proposition~\ref{sc-prop1} provides an answer to the {\bf Open Problem~4} when $n=2$.}
\begin{corollary}
\label{c}
If $n=2$ and the hypotheses of Proposition~\ref{sc-prop1} hold true, then the Barabanov balls are strictly convex.  
\end{corollary}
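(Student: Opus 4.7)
The plan is to reduce strict convexity of the Barabanov ball to the non-existence of nontrivial line segments contained in $S$, and then invoke Proposition~\ref{sc-prop1} directly in the special case $n=2$, where hyperplanes are precisely affine lines. Recall that for a symmetric convex body in $\mathbb{R}^2$ (which is always the case for the unit ball of a norm) strict convexity is equivalent to the statement that the boundary $S$ contains no nondegenerate line segment; this is a standard consequence of the fact that any segment on $S$ would fail to be a supporting set reduced to a single extreme point.

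With this reduction in hand, the argument is by contradiction. I would assume that $S$ contains some nondegenerate closed segment $[a,b]$ with $a\neq b$, and let $P$ be the affine line of $\mathbb{R}^2$ passing through $a$ and $b$. Because $n=2$, this line $P$ is a hyperplane in the sense of Proposition~\ref{sc-prop1}. By construction $[a,b]\subset S\cap P$, and the open segment $(a,b)$ is contained in the relative interior of $S\cap P$ inside $P$, so $S\cap P$ has non-empty relative interior in $P$. This directly contradicts Proposition~\ref{sc-prop1}, and therefore no such segment can exist, proving that the Barabanov ball is strictly convex.

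The main conceptual obstacle has already been handled by Proposition~\ref{sc-prop1}: the non-degeneracy of $\mathcal{M}$ (all matrices nonsingular) was used to rule out the existence of flat faces of $S$ of codimension one, via the tangency argument involving $l^T A(0)x = 0$. In two dimensions, a flat face on $S$ is exactly a segment, so the corollary is essentially a reinterpretation of the proposition. Thus there is no genuine new difficulty here; the only point that deserves a brief comment is that when $a$ and $b$ are antipodal, the line $P=\R a$ does pass through the origin, but then the segment $[a,b]$ must cross $0$, which is impossible since $0\notin S$; hence we may always assume $0\notin P$, which matches the setting of Proposition~\ref{sc-prop1}.
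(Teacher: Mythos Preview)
Your proof is correct and is exactly the immediate deduction the paper intends: in dimension $2$ a hyperplane is an affine line, so a nondegenerate segment on $S$ would give $S\cap P$ nonempty relative interior in $P$, contradicting Proposition~\ref{sc-prop1}. Your remark on the antipodal case is fine but not even needed, since the statement of Proposition~\ref{sc-prop1} already covers the case $0\in P$ (its proof dispatches that case in the first line).
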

Addressing the issue of strict convexity of Barabanov balls appears to be a complicated task in the general case. The following result shows that the Barabanov unit ball is strictly convex under some regularity condition on $\mathcal{M}$.
\begin{theorem}
\label{sc} 
If $\mathcal{M}$ is a $\mathcal{C}^1$ convex compact domain of $\mathbb{R}^{n\times n}$, irreducible and with $\rho(\mathcal{M})=0$, then Barabanov balls are strictly convex. 
\end{theorem}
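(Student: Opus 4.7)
The plan is to argue by contradiction. Suppose that $S$ contains a non-trivial segment $[x_0,x_1]$ with $x_0\neq x_1$. By Theorem~\ref{th1} there is a $v$-extremal trajectory starting at the midpoint $x_{1/2}:=(x_0+x_1)/2$, described by some fundamental matrix $R(\cdot)$ associated with a switching law $A(\cdot)$. Lemma~\ref{sc-lem1} would then upgrade this single extremal into a whole one-parameter family: $R(\cdot)x_\beta$ is $v$-extremal for every $\beta\in[0,1]$ and $[R(t)x_0,R(t)x_1]\subset S$ for every $t\geq 0$; in particular all these extremals share the same switching law $A(\cdot)$.

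Next, for any fixed $T>0$ I would construct a single adjoint valid for the whole family. I would pick a supporting linear functional $\hat l$ of $S$ at any relative-interior point of the segment $[R(T)x_0,R(T)x_1]$; a short convexity argument shows that any element of $\partial v$ at such a point is automatically constant on the whole segment, so $\hat l\in \partial v(R(T)x_\beta)$ for every $\beta\in[0,1]$. Applying Theorem~\ref{mtheorem} on $[0,T]$ to each extremal $R(\cdot)x_\beta$ with the common terminal value $\hat l$ and the common switching law $A(\cdot)$, the resulting adjoints all solve the same linear ODE $\dot l=-A^T(t)l$ with the same terminal data, hence coincide with a single Lipschitz curve $l(\cdot)$. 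The PMP then yields, for a.e.\ $t\in[0,T]$ and every $\beta\in[0,1]$, $\max_{A\in\mathcal{M}}l(t)^T A R(t)x_\beta = l(t)^T A(t) R(t)x_\beta = 0$, so $A(t)$ is the common maximizer over $\mathcal{M}$ of the entire family of linear functionals $F_\beta:A\mapsto l(t)^T A R(t)x_\beta$.

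The final step would exploit the $C^1$ hypothesis on $\mathcal{M}$. Each $F_\beta$ corresponds, via the Frobenius pairing on $\mathbb{R}^{n\times n}$, to the rank-one matrix $l(t)(R(t)x_\beta)^T$, which is non-zero since $l(t)$ and $R(t)x_\beta$ are both non-zero. As $\mathcal{M}$ is a $C^1$ convex body, its outward normal cone at the boundary point $A(t)$ is a single ray, so each of these rank-one matrices must be a strictly positive multiple of one common direction. In particular $l(t)(R(t)x_1)^T$ and $l(t)(R(t)x_0)^T$ would be positive scalar multiples of each other, and $l(t)\neq 0$ then forces $R(t)x_1=k(t)R(t)x_0$ for some $k(t)>0$. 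Non-singularity of $R(t)$ yields $x_1=k(t)x_0$, so $k$ is independent of $t$, and the normalization $v(x_0)=v(x_1)=1$ forces $k=1$ and hence $x_0=x_1$, a contradiction.

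The main obstacle will be the construction of the single adjoint curve $l(\cdot)$ that simultaneously satisfies Theorem~\ref{mtheorem} for every extremal in the family $R(\cdot)x_\beta$. This rests on the conjunction of two facts, neither of them hard in isolation: that every supporting functional at an interior point of a flat face of $S$ supports the whole face, and that Lemma~\ref{sc-lem1} gives all the trajectories $R(\cdot)x_\beta$ the same switching law, so that the adjoint ODE is the same for every $\beta$. Once this common adjoint is secured, the $C^1$ regularity of $\partial\mathcal{M}$ provides exactly the rigidity (uniqueness of the outward normal at $A(t)$) needed to rule out any flat face on $S$.
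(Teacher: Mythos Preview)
Your proof is correct and follows the same strategy as the paper: assume a segment on $S$, propagate it via Lemma~\ref{sc-lem1} into a family of extremals with common switching law $A(\cdot)$, apply the PMP, and use the $C^1$ smoothness of $\partial\mathcal{M}$ (unique outward normal at the boundary point $A(t)$) to force $R(t)x_0$ and $R(t)x_1$ to be collinear, whence $x_0=x_1$. The only variation is cosmetic: you construct a single adjoint $l(\cdot)$ for the whole family by fixing a common terminal value $\hat l$ supporting the flat face at time $T$, whereas the paper allows each $x_\mu$ its own adjoint $l_\mu$ and observes directly that the rank-one normal $x_\mu(t)l_\mu(t)^T$ at $A(t)$ pins down $x_\mu(t)\in S$ up to sign regardless of $l_\mu$.
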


\begin{proof}
By contradiction assume that there exist two distinct points $x_0, x_1\in S$ and $\lambda\in(0,1)$ such that $x_\lambda=\lambda x_1+(1-\lambda)x_0\in S$. For every $(x,l)\in S\times(\mathbb{R}^n\setminus\{0\})$, we define the linear functional $\phi_{(x,l)}(A)= l^T Ax$ on $\mathbb{R}^{n\times n}$. Notice that $\forall A\in\mathcal{M}$ there exists at most one supporting hyperplane $H$ of  $\mathcal{M}$ at $A$ given by 
$$
H=\big\{B\in\mathbb{R}^{n\times n}:\phi_{\left(\bar{x}(A),\bar{l}(A)\right)}(B)=0\big\},
$$
for some point  $(\bar{x}(A),\bar{l}(A))\in S\times(\mathbb{R}^n\setminus\{0\})$. One has also that $(\bar{x}(A),\bar{l}(A))$ is uniquely defined if it exists.
Let $x_{\lambda}(\cdot)=R(\cdot)x_{\lambda}$ be an extremal solution. Then $x_\mu(\cdot):=R(\cdot)x_{\mu}$ is extremal for all $\mu\in[0,1]$, where $x_\mu=\mu x_1+(1-\mu)x_0$. By using \eqref{c4}, one gets $x_\mu(t)=\pm \bar{x}(A(t))$ for a.e. $t$ and every $\mu\in [0,1]$ which implies that $x_0(t)=\pm x_1(t)$ for $t$ sufficiently small. Since $[x_0,x_1]\subset S$, then $x_0=x_1$. Hence the contradiction proves Theorem~\ref{sc}. 

\end{proof}

\section{A Poincar\'e-Bendixson Theorem for extremal solutions}
\label{s6}
In this section we  first show a characterization of the extremal flows in the framework of linear differential inclusions in $\mathbb{R}^n$. Then, for $n=3$, we prove a Poincar\'e-Bendixson theorem for extremal solutions of System~\eqref{sys0}. 
Similar results have been implicitly assumed and used in \cite{Barabanov3,Barabanov2}.

Recall that an absolutely continuous function is a solution of Equation~\eqref{sys0} if and only if it satisfies the differential inclusion
\[\dot x(t)\in F_{\mathcal{M}}(x),\]
where  $F_{\mathcal{M}}(x):=\{Ax: A\in\mathcal{M}\}$ is a multifunction defined on $\mathbb{R}^n$ and taking values on the power set of $\mathbb{R}^n$.
In particular it is easy to see that, for every $x\in\mathbb{R}^n$, $F_{\mathcal{M}}(x)$ is non-empty, compact and convex.
Moreover, $F_{\mathcal{M}}(\cdot)$ turns out to be upper semicontinuous on $\mathbb{R}^n$. 
Let us briefly recall, as it will be useful later, the definition of upper semicontinuity for multifunctions. We say that a multifunction  $F(\cdot)$ defined on $D\subset \mathbb{R}^k$ and taking (closed) values on the power set of $\mathbb{R}^h$ is upper semicontinuous at $x\in D$ if $\lim_{y\rightarrow x}\beta(F(y),F(x))= 0$. If $F(\cdot)$ is upper semicontinuous at every $x\in D$, we say that $F(\cdot)$ is upper semicontinuous on~$D$. Here $\beta(A,B)=\sup_{a\in A}{d(a,B)}$, where $d(a,B)$ is the Euclidean distance between the point  $a\in A$ and the set $B$. (Note that $\beta(\cdot)$ is not formally a distance since it is not symmetric and $\beta(A,B)=0$ just implies $A\subset B$.)

Extremal trajectories can also be described as solutions of a suitable differential inclusion, as we will see next. 
Let $x\in \mathbb{R}^n$ and $A\in\mathbb{R}^{n\times n}$. We say that $A$  verifies $\mathcal{P}(x)$ if there exists $l\in\partial v(x)$ such that $l^T Ax=\max_{B\in\mathcal{M}}{l^T Bx}=0$.
Consider now the linear differential inclusion
\begin{align}
\label{1}
\dot{x}\in\Hat{F}_{\mathcal{M}}(x):=\{Ax: A\in\mathcal{M}, A~\text{verifies}~\mathcal{P}(x)\}.
\end{align}
\begin{definition}
A solution $x(\cdot)$ of System $\eqref{sys0}$ is said to be solution of System $\eqref{1}$ if  it is associated with a switching law $A(\cdot)$ such that $A(t)$ satisfies $\mathcal{P}(x(t))$
for a.e. time $t$ in the domain of~$x(\cdot)$.
\end{definition}

Based on Theorem~\ref{th2}, the following result provides a necessary and sufficient condition for a solution of System~\eqref{sys0} to be extremal.
\begin{proposition}
The solutions of System~\eqref{1} coincide with the extremal solutions of System~\eqref{sys0}.
\end{proposition}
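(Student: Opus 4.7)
The proof splits into two implications. For the forward direction, suppose $x(\cdot)$ is a $v$-extremal solution of \eqref{sys0} associated with some switching law $A(\cdot)$. Theorem~\ref{th2} supplies a non-zero solution $l(\cdot)$ of the adjoint system such that $l(t)\in\partial v(x(t))$ for every $t\geq 0$ and $l^T(t) A(t) x(t) = \max_{B\in\mathcal{M}} l^T(t) B x(t) = 0$ for a.e. $t\geq 0$. Choosing $l:=l(t)$ at each such $t$, the matrix $A(t)$ satisfies $\mathcal{P}(x(t))$, so $x(\cdot)$ is a solution of \eqref{1}. This direction is really just a repackaging of the maximum principle established earlier.

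For the converse, let $x(\cdot)$ be a solution of \eqref{1} with switching law $A(\cdot)$. By definition, for a.e. $t$ there exists $l(t)\in\partial v(x(t))$ with $l(t)^T A(t) x(t)=0$. The goal is to show that $v(x(\cdot))$ is constant. Since $x(\cdot)$ is absolutely continuous and $v(\cdot)$ is Lipschitz on bounded sets, the composition $\varphi(t):=v(x(t))$ is absolutely continuous, hence differentiable almost everywhere. By Item~1 of Theorem~\ref{th1}, $\varphi$ is non-increasing, so $\varphi'(t)\leq 0$ at every differentiability point.

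To get the reverse inequality I would use the subgradient inequality at $x(t)$: for $h>0$,
\begin{equation*}
v(x(t+h)) - v(x(t)) \geq l(t)^T\bigl(x(t+h)-x(t)\bigr).
\end{equation*}
At a point $t$ at which $x(\cdot)$ is differentiable with $\dot x(t)=A(t)x(t)$, the condition $\mathcal{P}(x(t))$ holds and $\varphi$ is differentiable (which is the case for a.e. $t$ in the common intersection of three full-measure sets), dividing by $h$ and letting $h\to 0^+$ yields
\begin{equation*}
\varphi'(t) \geq l(t)^T \dot x(t) = l(t)^T A(t) x(t) = 0.
\end{equation*}
Combined with $\varphi'(t)\leq 0$, this gives $\varphi'(t)=0$ a.e., and absolute continuity forces $\varphi$ to be constant. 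Hence $x(\cdot)$ is $v$-extremal.

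The only non-routine point is the justification that $\varphi$ is absolutely continuous and differentiable a.e., and that the one-sided comparison above transfers to its classical derivative. This reduces to the fact that $v$ is a norm (hence globally Lipschitz) and $x(\cdot)$ is absolutely continuous, plus the standard observation that the right Dini derivative of $\varphi$ is bounded below by $l(t)^T \dot x(t)$ for every selection $l(t)\in\partial v(x(t))$. Once this is set, the equivalence follows without further difficulty.
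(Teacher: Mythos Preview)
Your proof is correct and follows essentially the same approach as the paper: both directions are handled identically, with the forward implication immediate from Theorem~\ref{th2} and the converse obtained by showing that $\varphi(t)=v(x(t))$ is absolutely continuous and has zero derivative a.e., using the subgradient inequality $\varphi(t+h)-\varphi(t)\geq l(t)^T(x(t+h)-x(t))$ together with the non-increase of $\varphi$.
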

\begin{proof}
By virtue of Theorem~\ref{th2}, if $x(\cdot)$ is extremal then it is clearly a solution of System $\eqref{1}$.

On the other hand, let $x(\cdot)$ be a solution of $\eqref{1}$ associated with some switching law $A(\cdot)$ such that $A(t)$ verifies $\mathcal{P}(x(t))$ for a.e. $t$. We prove that $x(\cdot)$ is extremal. Notice that  the map $\varphi:t\mapsto v(x(t))$ is absolutely continuous on every compact interval $[a,b]\subset\mathbb{R}_+$ since $v(\cdot)$ is Lipschitz and $x(\cdot)$ is absolutely continuous. Therefore, to prove that $\varphi(\cdot)$ is constant on $[a,b]$, it is enough to show that $\varphi(\cdot)$ has zero derivative for a.e. $t\in[a,b]$. Let $t_0\in[a,b]$ be a point of differentiability of $\varphi(\cdot),\ x(\cdot)$ and such that $A(t_0)$ verifies $\mathcal{P}(x(t_0))$. Then there exists $l_0\in\partial v(x(t_0))$ such that $l_0^{T} A(t_0)x(t_0)=0$. Since 
$
\varphi(t)-\varphi(t_0)\geq l_0^T(x(t)-x(t_0))
$
for every $t\in[a,b],$
one deduces that  
$
\frac{\varphi(t)-\varphi(t_0)}{t-t_0}\geq l_0^{T}\left(\frac{x(t)-x(t_0)}{t-t_0}\right)
$
for $t>t_0$.
Passing to the limit as $t\rightarrow t_0^+$, we get $\dot{\varphi}(t_0)\geq l_0^{T}\dot{x}(t_0)=l_0^{T}A(t_0)x(t_0)=0$. On the other hand, since $v(\cdot)$ is non increasing along $x(\cdot)$ then $\dot{\varphi}(t_0)\leq 0$. Hence $\dot\varphi(t)=0$ for a.e.    $t\in[a,b]$. This proves that $x(\cdot)$ is extremal.

\end{proof}

 In the last part of this section, we focus on the asymptotic behaviour of the extremal solutions of System~\eqref{sys0}, and in particular we state a Poincar\'e-Bendixson theorem for extremal trajectories.
 From now on we will assume $n=3$, so that extremal trajectories live on a two-dimensional (Lipschitz) surface. 
\begin{remark}
The classical Poincar\'e-Bendixson results for planar differential inclusions (see e.g. \cite[Theorem~3, page~137]{Filippov}) do not apply in our case, since, besides the fact that our system is defined on a non-smooth manifold instead of $\mathbb{R}^2$, we cannot ensure that some usual requirements such as the convexity of $\Hat{F}_{\mathcal{M}}(x)$ or the upper semicontinuity of $\Hat{F}_{\mathcal{M}}(\cdot)$ are satisfied.
\end{remark}
\begin{definition}\label{def-TS}
A transverse section $\Sigma$ of $S$ for System~\eqref{1} is a connected subset of the intersection of $S$ with a plane $P$ such that for every $x\in\Sigma$ and $y\in\Hat{F}_{\mathcal{M}}(x)$ we have $w^T y>0$ where $w$ is an orthogonal vector to $P$ (see~Figure~\ref{f-trans}).
\begin{figure}
\begin{center}
\includegraphics[scale=0.4]{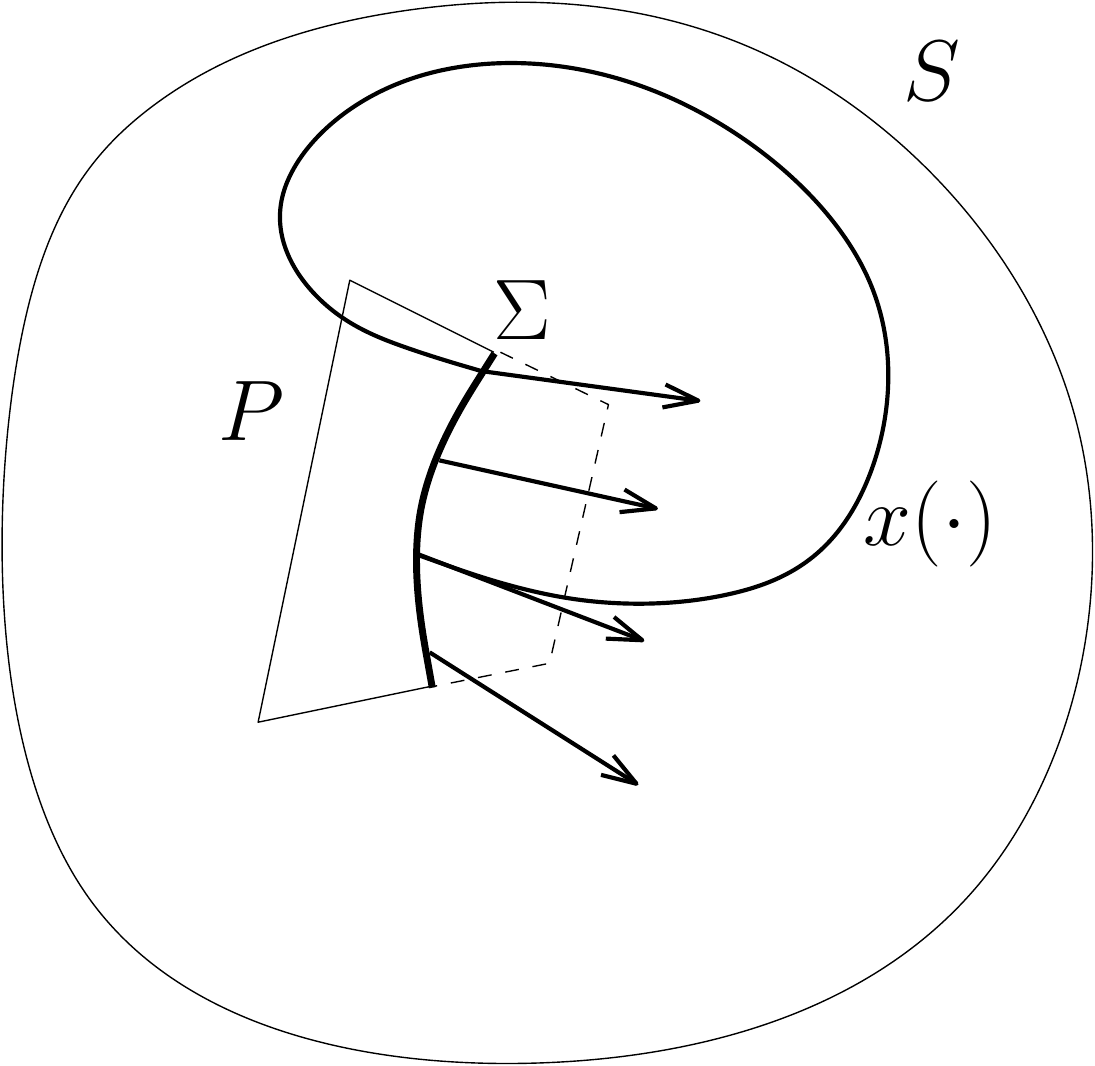}
\caption{Transverse section and proof of Poincar\'e-Bendixson theorem.}
\label{f-trans}
\end{center}
\end{figure}   
\end{definition}
\begin{definition}
We say that $z\in S$ is a stationary point of System~\eqref{sys0} if $0\in F_{\mathcal{M}}(z)$. Otherwise, we say that $z$ is a nonstationary point.
\end{definition}
\blue{The following lemma allows one to follow a strategy which is similar to the classical one in order to prove a Poincar\'e-Bendixson result.}
\begin{lemma}
\label{st}
For every nonstationary point $z\in S$, there exists a local transverse section $\Sigma$ of $S$ containing $z$.
\end{lemma}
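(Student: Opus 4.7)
The plan is to construct a plane $P$ through $z$ separating the velocity set at $z$ from the origin, and to take $\Sigma$ as a connected component of $S\cap P$ near $z$.

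First, because $z$ is nonstationary, $0 \notin F_{\M}(z) = \M z$. The set $\M z$ is the image of the compact convex set $\M$ under the continuous linear map $A \mapsto Az$, so it is compact and convex. The strict separation form of Hahn--Banach therefore yields $w \in \mathbb{R}^n \setminus \{0\}$ and $\alpha > 0$ such that $w^T y \geq \alpha$ for every $y \in \M z$.

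Second, since $\M$ is compact, the function $\psi(x) := \min_{A \in \M} w^T Ax$ is continuous on $\mathbb{R}^n$, and $\psi(z) \geq \alpha > 0$. Hence there is an open neighborhood $U$ of $z$ on which $\psi > 0$. In particular $w^T y > 0$ for every $x \in U$ and every $y \in F_{\M}(x)$, and \emph{a fortiori} for every $y \in \Hat{F}_{\M}(x)\subset F_{\M}(x)$.

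Third, set $P := \{x : w^T(x-z) = 0\}$ and take $\Sigma$ to be the connected component of $(S \cap P) \cap U$ containing $z$. By construction $\Sigma \subset S \cap P$ is connected, contains $z$, and satisfies the transversality condition of Definition~\ref{def-TS}, so it qualifies as a local transverse section.

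The main obstacle beyond this formal construction is to ensure that $\Sigma$ is genuinely one-dimensional---which is what is ultimately needed for the Poincar\'e--Bendixson argument. This amounts to showing that $P$ meets the interior of the Barabanov ball $B := \{v \leq 1\}$, in which case $P\cap B$ is a $2$-dimensional convex set in $P$ and $S \cap P=\partial(P\cap B)$ is a Jordan curve through $z$. The strict inequality $w^T A z > \alpha$ for all $A\in\M$ already rules out the half-space inclusion $B \subset \{x : w^T(x-z) \leq 0\}$, since along any solution of \eqref{sys0} starting at $z$ the derivative of $t\mapsto w^T(x(t)-z)$ equals $w^T A(t)z>0$. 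The only remaining degenerate case is $B \subset \{x : w^T(x-z) \geq 0\}$ with $P \cap B = \{z\}$, i.e.\ $z$ is a ``vertex'' of $B$ exposed by $-w$. Avoiding this case amounts to choosing $w$ inside the open separating cone $K = \{w : w^T y > 0 \ \forall y \in \M z\}$ and outside $-\mathrm{cone}(\partial v(z))$, which is typically of smaller dimension than $K$; a generic choice inside $K$ then delivers a true one-dimensional transverse section through $z$.
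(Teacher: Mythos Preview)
Your first three paragraphs are correct and take essentially the same route as the paper: Hahn--Banach separation of $0$ from the compact convex set $\mathcal{M}z$, followed by a continuity argument extending the strict inequality $w^TAx>0$ to a neighborhood of $z$. The paper phrases the continuity step via upper semicontinuity of the multifunction $F_{\mathcal{M}}$ and the positive distance $d_0$ from $F_{\mathcal{M}}(z)$ to the separating plane; your use of the continuous function $\psi(x)=\min_{A\in\mathcal{M}}w^TAx$ is equivalent and slightly more direct. Your explicit choice of the plane $P=\{x:w^T(x-z)=0\}$ through $z$ is in fact cleaner than the paper's wording, which takes the separating hyperplane $\{x:w^Tx=\nu\}$ itself and does not verify that $z$ belongs to it.

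Your final paragraph goes beyond the lemma as stated: Definition~\ref{def-TS} does not require $\Sigma$ to be more than a singleton, and the paper does not address this issue. The argument you sketch there (``typically of smaller dimension'', ``a generic choice'') is heuristic and would need more work to be made rigorous---for instance, it is not obvious that the open cone $K$ is never contained in $-\mathrm{cone}(\partial v(z))$ when $z$ is a vertex of the Barabanov ball. However, since this concerns a usability question rather than the statement being proved, it does not affect the validity of your proof of the lemma.
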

\begin{proof}
Let $z$ be a nonstationary point of System~\eqref{sys0}. Since the set $F_{\mathcal{M}}(z)$ is convex and compact, by Hahn-Banach theorem, there exists a plane $P$ which strictly separate $0$ and $F_{\mathcal{M}}(z)$. Define
$
P:=\{x\in\mathbb{R}^3: w^T x=\nu\}
$
for some $w\in\mathbb{R}^3$ and $\nu\in\mathbb{R}$. Without loss of generality, assume that $\nu>0$. Then, the point $0$ lies in the region $\{x: w^Tx<\nu\}$, and the set $F_{\mathcal{M}}(z)$ lies in the region $\{x:w^T x>\nu\}$. Since $P$ is closed, $F_{\mathcal{M}}(z)$ is compact and $P\cap F_{\mathcal{M}}(z)=\emptyset$,  then $d_0:=\min_{y\in F_{\mathcal{M}}(z)}d(y,P)>0$. By the fact that $F_{\mathcal{M}}$ is upper semicontinous, there exists $\varepsilon_0>0$ so that if $x$ verifies $\|x-z\|<\varepsilon_0$, one has  $\beta(F_{\mathcal{M}}(x),F_{\mathcal{M}}(z)) < d_0$. It results that, for every $x\in\mathbb{R}^3$ such that $\|x-z\|<\varepsilon_0$, $F_{\mathcal{M}}(x)$ is a subset of $\{y\in\mathbb{R}^3:w^T y>\nu\}$. 
Hence, for every $x\in P\cap \{x\in S:\|x-z\|<\varepsilon_0\}$ and  $y\in\Hat{F}_{\mathcal{M}}(x)$ we have $w^T y>0$, which proves the lemma.

\end{proof}
We now state our Poincar\'e-Bendixson result for extremal solutions of System~\eqref{sys0}.
\begin{theorem}
\label{p2}
Assume that $n=3$, Condition $G$ holds true and every matrix of $\mathcal{M}$ is non-singular. Then every extremal solution of System $\eqref{sys0}$ tends to a periodic solution of~\eqref{sys0}.
\end{theorem}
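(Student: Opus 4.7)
My plan is to adapt the classical Poincar\'e--Bendixson strategy to extremal trajectories on the Barabanov sphere $S$, which is a topological $2$-sphere as the boundary of the convex body $\{v\le 1\}$. First, since every $A\in\mathcal{M}$ is non-singular and no $z\in S$ is zero, no point of $S$ is stationary, so by Lemma~\ref{st} every point of $S$ admits a local transverse section. Second, I would establish positive invariance of the $\omega$-limit set $\omega:=\omega(x(\cdot))$: if $x(t_n)\to y\in\omega$ with $t_n\to\infty$, the shifts $x(t_n+\cdot)$ are equi-Lipschitz and equibounded, and Arzel\`a--Ascoli combined with the upper semicontinuity and convex-compact values of $F_{\mathcal{M}}$ extracts a limit extremal $y(\cdot)$ with $y(0)=y$ and $y([0,\infty))\subset\omega$.

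The crux is a monotonicity lemma for intersections with a transverse section $\Sigma$. Given an extremal $z(\cdot)$ meeting $\Sigma$ at times $s_1<s_2$ with $z(s_1)\ne z(s_2)$, one glues the trajectory arc from $z(s_1)$ to $z(s_2)$ with the segment $[z(s_1),z(s_2)]\subset\Sigma$ into a Jordan curve $J\subset S$, which separates $S$ into two components. Transversality forces all extremal crossings of $\Sigma$ to occur in a single direction, while Proposition~\ref{p4} prevents $z(\cdot)$ from re-meeting its own arc without becoming periodic (in which case the theorem is already proven). A standard topological trapping argument then places the next crossing $z(s_3)$ on the far side of $z(s_2)$ along $\Sigma$, yielding monotonicity. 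From this I would deduce that for $y\in\omega$ and any section $\Sigma$ through $y$, the crossings $x(\cdot)\cap\Sigma$ converge monotonically to $y$ and $\omega\cap\Sigma=\{y\}$; in particular, any extremal trajectory contained in $\omega$ meets any transverse section at most once.

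Applying this to the extremal $y(\cdot)\subset\omega$ from the first step, I would pick $w\in\omega(y(\cdot))\subset\omega$ and a section $\Sigma'$ through $w$. A flow-box argument yields crossings $y(r_n)\in\Sigma'$ with $y(r_n)\to w$, and since $\omega\cap\Sigma'=\{w\}$ all these crossings equal $w$. Proposition~\ref{p4} applied to the self-intersection $y(r_1)=y(r_2)=w$ (or standard ODE coincidence if the shifts of $y(\cdot)$ already agree on an interval) then gives periodicity of $y(\cdot)$, whose trace $\Gamma\subset\omega$ is a Jordan curve on $S$.

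It remains to show $\omega=\Gamma$ and to derive the convergence of $x(\cdot)$ to $\Gamma$. If $x(\cdot)$ ever reaches $\Gamma$, Corollary~\ref{c2} keeps it there and the conclusion is immediate. Otherwise $x(\cdot)$ stays in one component $S_1$ of $S\setminus\Gamma$, and a putative point $p\in\omega\setminus\Gamma$ would yield, by the same periodicity argument, a second periodic orbit $\Gamma'\subset\omega\cap S_1$ disjoint from $\Gamma$ (two periodic orbits sharing a point must coincide by forward uniqueness). A section crossing the annular region between $\Gamma$ and $\Gamma'$ would then be met infinitely often by $x(\cdot)$ in both monotone directions, a contradiction. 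Hence $\omega=\Gamma$ and $x(\cdot)$ tends to the periodic solution tracing $\Gamma$. The main obstacle is the monotonicity step: extremal trajectories are only forward-unique at intersection points (Proposition~\ref{p4}) and may be non-unique backward, so the Jordan curve/trapping argument cannot rely on any global flow uniqueness and must be justified via Proposition~\ref{p4} together with Corollary~\ref{c2} to rule out spurious re-crossings of a trajectory arc.
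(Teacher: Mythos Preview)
Your overall scheme is exactly the paper's: no stationary points, invariance of $\omega(x(\cdot))$ under the extremal ``flow'' (the paper's Lemma~\ref{union}), monotone crossings of a transverse section and single-point intersection $\omega\cap\Sigma$ (the paper's Lemma~\ref{3}), and then periodicity inside $\omega$. The divergence, and the real gap, is in your last two paragraphs.

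Your step~5 only shows that the extremal $y(\cdot)\subset\omega$ through a point $y\in\omega$ is \emph{eventually} periodic: you obtain $y(r_1)=y(r_2)=w$, hence $y$ is periodic on $[r_1,\infty)$, with periodic trace $\Gamma$. You then argue in step~6 that a point $p\in\omega\setminus\Gamma$ yields, by ``the same periodicity argument'', a second periodic orbit $\Gamma'\neq\Gamma$. But the same argument applied to the extremal $p(\cdot)\subset\omega$ only gives periodicity of $p(\cdot)$ on $[s_1,\infty)$ for some $s_1>0$, and nothing prevents $p(\cdot)$ from reaching $\Gamma$ in finite time; by Corollary~\ref{c2} and forward uniqueness it would then merge with $\Gamma$, so the periodic part would be $\Gamma$ again and no distinct $\Gamma'$ is produced. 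The paper closes precisely this gap (Lemma~\ref{4}): for every $z\in\omega$ the extremal $x_z(\cdot)$ is periodic \emph{from time $0$}. The extra step is a section argument at $x_z(t^*-\varepsilon)$, where $t^*$ is the first self-intersection time: a transverse section there (taken with the same normal direction as a section at $x_z(t^*)$) is necessarily crossed by the nearby periodic curve $\Gamma\subset\omega$, but $\omega$ meets the section in a single point, forcing $x_z(t^*-\varepsilon)\in\Gamma$ and contradicting minimality of $t^*$. Once every point of $\omega$ lies on a periodic orbit, the paper's conclusion is clean: an accumulation of distinct cycles is ruled out by the single-intersection property of $\omega$ with a section, so $\omega$ is a finite union of pairwise disjoint closed cycles, and connectedness forces it to be a single one.

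Your proposed contradiction in step~6 is also imprecise even if $\Gamma'\neq\Gamma$ were granted: a ``section crossing the annular region'' that both $\Gamma$ and $\Gamma'$ meet need not exist, since transverse sections here are local objects, and ``met in both monotone directions'' is not a well-defined obstruction. The cleaner route is the one above (or, alternatively, the classical trapping argument: once $\Gamma\subset\omega$ is found, consecutive crossings of $x(\cdot)$ with a section at a point of $\Gamma$ bound shrinking invariant regions, forcing $\omega=\Gamma$ directly without invoking a second cycle).
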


If $x(\cdot)$ is a non-injective solution of  $\eqref{1}$ starting at $x_0\in S$ and associated with $A(\cdot)$, then, by Proposition $\ref{p4}$, $x(\cdot)$ is periodic on $[T,+\infty)$ for some $T\geq0$ implying that $\omega(x(\cdot))$ is a periodic trajectory, and the conclusion of Theorem~$\ref{p2}$ holds.
Thus, without loss of generality,  we will prove the theorem under the assumption  that the trajectory $x(\cdot)$ does not intersect itself, that is $x(t_1)\neq x(t_2)$ if $t_1\neq t_2$. We give the following lemma without proof because the results contained in it are standard and can be either found in $\cite{Filippov}$ or easily derived from Proposition~\ref{p4}.

\begin{lemma}
\label{3}
Given an extremal trajectory $x(\cdot)$ the following results hold true.
\begin{itemize}
\item The $\omega$-limit set $\omega(x(\cdot))$ is a compact and connected subset of $S$.
\item If $x(\cdot)$ intersects a transverse section $\Sigma$ several times, the intersection points are placed  monotonically on $\Sigma$, i.e., for any increasing sequence of positive numbers $(t_i)_{i\geq 0}$ such that $x(t_i)\in\Sigma$ for every $i\geq 0$, $x(t_i)$ belongs to the arc in $\Sigma$ between $x(t_{i-1})$ and $x(t_{i+1})$ for every $i\geq 1$.
\item The $\omega$-limit set $\omega(x(\cdot))$ of the trajectory $x(\cdot)$ can intersect a transverse section $\Sigma$ at no more than one point. Moreover, if $z$ is the point of intersection of $\omega(x(\cdot))$ and $\Sigma$ then $x(\cdot)$ intersects $\Sigma$ only at some points 
$$
z_i=x(t_i),~~~\\\\\ t_i<t_{i+1},~~\text{for}~~~\\i\geq 1,~~~\\ t_i\rightarrow+\infty,
$$
such that the sequence $(z_i)_{i\geq 1}$ tends to $z$ monotonically on $\Sigma$.
\end{itemize}
\end{lemma}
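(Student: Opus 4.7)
The plan is to address the three bullets in the order in which they appear, using only the compactness of $S$, the continuity of $x(\cdot)$, the Jordan curve theorem on the topological $2$-sphere $S$, and the forward-uniqueness furnished by Proposition~\ref{p4}.

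For the first item, I would rely only on the continuity of $x(\cdot)$ and the fact that $x(t)\in S$, so the trajectory lives in a compact set. Writing
$\omega(x(\cdot))=\bigcap_{T\geq 0}\overline{\{x(t):t\geq T\}}$
as a nested intersection of non-empty compact subsets of $S$ immediately yields non-emptiness and compactness. For connectedness I argue by contradiction: if $\omega(x(\cdot))=K_1\cup K_2$ with $K_1,K_2$ disjoint, non-empty and compact, pick disjoint open neighbourhoods $U_1,U_2$ of $K_1,K_2$ in $S$; by definition of the $\omega$-limit set $x(t)$ enters each $U_i$ infinitely often, so by continuity it passes through the compact set $S\setminus(U_1\cup U_2)$ at a divergent sequence of times, producing an $\omega$-limit point in $S\setminus(U_1\cup U_2)$, contradicting $\omega(x(\cdot))\subset U_1\cup U_2$.

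For the monotonicity claim, I would use the Jordan curve theorem on $S$ (which is homeomorphic to $S^2$ since it is the boundary of a convex body of $\mathbb{R}^3$). Fix two consecutive crossing times $t_1<t_2$ with $x(t_i)\in\Sigma$, let $\gamma:=\{x(t):t\in[t_1,t_2]\}$, and let $\sigma\subset\Sigma$ be the sub-arc joining $x(t_1)$ and $x(t_2)$. Since we are reasoning on a non-self-intersecting trajectory, $J:=\gamma\cup\sigma$ is a simple closed curve, and the Jordan curve theorem gives two connected components $\Omega_+,\Omega_-$ of $S\setminus J$. The transversality condition $w^T\dot x>0$ forces the trajectory to leave $\sigma$ at time $t_2^+$ into the component, say $\Omega_+$, lying on the positive side of $\Sigma$. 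I then need to show the trajectory remains in $\Omega_+\cup J$ for $t>t_2$ until the next intersection with $\Sigma$. It cannot cross $\gamma$ backwards because that would mean two extremal trajectories meet at a point of $\gamma$, and Proposition~\ref{p4} would then force them to coincide afterwards, contradicting injectivity of $x(\cdot)$; and it cannot re-cross $\sigma$ either, because on $\Sigma$ all extremal velocities satisfy $w^T\dot x>0$, ruling out a crossing in the opposite direction. Therefore the next return to $\Sigma$ lies strictly on the far side of $x(t_2)$ with respect to $x(t_1)$ on $\Sigma$, which is exactly the monotonicity claim.

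For the third item, suppose for contradiction that $\omega(x(\cdot))$ contains two distinct points $z,z'\in\Sigma$. Both are non-stationary (since $\dot x$ has non-vanishing transversal component along $\Sigma$), so by Lemma~\ref{st} I can shrink $\Sigma$ around $z$ and $z'$ to obtain local transverse sections. The definition of $\omega$-limit set then produces divergent sequences of times at which $x(\cdot)$ crosses $\Sigma$ arbitrarily close to $z$ and arbitrarily close to $z'$. But by the second item the whole family of crossings of $x(\cdot)$ with $\Sigma$ forms a monotone sequence on $\Sigma$, hence it can accumulate at a single point only, contradicting the existence of two distinct accumulation points. This forces $z=z'$. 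Letting $z$ be this unique accumulation point and $(z_i)_{i\geq 1}=(x(t_i))_{i\geq 1}$ the successive intersections of $x(\cdot)$ with $\Sigma$, monotonicity already ensures $(z_i)$ converges monotonically on $\Sigma$ to $z$; and the sequence $(t_i)$ must diverge, for otherwise it would have a finite accumulation point at which $x(\cdot)$ hits $\Sigma$ transversally infinitely often in a bounded time, which is impossible since transversality provides a uniform lower bound on the time needed to return to $\Sigma$ after a crossing.

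The main obstacle is the Jordan-curve step in the second item, since it rests entirely on forward non-intersection of $x(\cdot)$ with its past. This is precisely what Proposition~\ref{p4}, together with Condition~G and the non-singularity assumption, provides; without these hypotheses one would be forced to appeal to Filippov-type arguments for planar differential inclusions, which do not apply here because $\hat F_{\mathcal{M}}$ is in general neither convex-valued nor upper semicontinuous, as already stressed in Remark after Definition~\ref{def-TS}.
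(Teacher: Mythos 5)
Your proof is correct and is precisely the classical Poincar\'e--Bendixson argument (the ``Bendixson sack'' built from the Jordan curve theorem on $S$, which is homeomorphic to the $2$-sphere, with forward uniqueness supplied by Proposition~\ref{p4}) that the paper alludes to: the authors state Lemma~\ref{3} \emph{without proof}, declaring the results standard and citing \cite{Filippov} together with Proposition~\ref{p4}, so your write-up simply supplies the omitted details along the intended route. The only point left implicit in your second bullet is the verification that, of the two sub-arcs of $\Sigma\setminus\sigma$, the one adjacent to $x(t_1)$ lies in the component of $S\setminus J$ \emph{not} entered by the trajectory at $t_2^+$ while the one adjacent to $x(t_2)$ lies in the entered component (a short local sector analysis at $x(t_1)$ and $x(t_2)$ using $w^T\dot x_{\pm}>0$), which is what actually forces the next return to land beyond $x(t_2)$ rather than merely outside $\sigma$.
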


The following two lemmas are crucial in the proof of Theorem~\ref{p2}
\begin{lemma}\label{union}
The $\omega$-limit set $\omega(x(\cdot))$ of $x(\cdot)$ is a union of extremal trajectories.
\end{lemma}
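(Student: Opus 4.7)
The plan is to combine weak-$*$ compactness of the set of switching laws with an Arzelà--Ascoli argument on the associated trajectories. Fix $\bar x\in\omega(x(\cdot))$ and a sequence $t_n\to+\infty$ with $x(t_n)\to\bar x$. Introduce the shifted trajectories $x_n(t):=x(t+t_n)$, each of which is an extremal solution of~\eqref{sys0} with initial datum converging to $\bar x$ and associated with the shifted switching law $A_n(t):=A(t+t_n)\in\mathcal M$.

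First I would extract a limit trajectory. Since all $x_n(\cdot)$ take values in the compact Barabanov sphere $S$ and satisfy $\|\dot x_n(t)\|\leq C$ for $C:=\sup_{y\in S,\,A\in\mathcal M}\|Ay\|$, Arzelà--Ascoli together with a diagonal argument produces a subsequence converging locally uniformly on $\mathbb R_+$ to some absolutely continuous $\bar x(\cdot)$ with $\bar x(0)=\bar x$. Simultaneously, thanks to the convexity and compactness of $\mathcal M$ and the separability of $L^1_{\mathrm{loc}}(\mathbb R_+,\mathbb R^{n\times n})$, a further subsequence of $(A_n)$ converges weak-$*$ to some $\bar A(\cdot)$ taking values in $\mathcal M$. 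Passing to the limit in the integral equation $x_n(t)=x_n(0)+\int_0^tA_n(s)x_n(s)\,ds$ via the decomposition $A_n(s)x_n(s)=A_n(s)\bar x(s)+A_n(s)(x_n(s)-\bar x(s))$ (where the first piece uses weak-$*$ convergence against $\bar x(\cdot)\in L^1_{\mathrm{loc}}$ and the second uses uniform smallness of $x_n-\bar x$) identifies $\bar x(\cdot)$ as a solution of~\eqref{sys0} associated with $\bar A(\cdot)$.

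It remains to verify that $\bar x(\cdot)$ is extremal and contained in $\omega(x(\cdot))$. For any fixed $t\geq 0$, $\bar x(t)=\lim_n x(t+t_n)$ with $t+t_n\to+\infty$, whence $\bar x(t)\in\omega(x(\cdot))$. For extremality, the continuity of $v$ together with the identity $v(x_n(t))=v(x(t+t_n))=v(x(0))$ valid for all $n$ and $t$ yields $v(\bar x(t))\equiv v(x(0))$ in the limit. Hence through every point of $\omega(x(\cdot))$ passes an entirely contained extremal trajectory, which is precisely the statement of the lemma. The step I expect to be the main obstacle is the closure property for the integral equation under weak-$*$ convergence of the switching laws; it relies crucially on the convexity of $\mathcal M$, without which weak-$*$ limits of $\mathcal M$-valued measurable functions need not remain admissible.
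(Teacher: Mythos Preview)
Your proof is correct and follows essentially the same approach as the paper: shift the switching laws and trajectories by $t_n$, use weak-$*$ compactness (Banach--Alaoglu) on the switching laws together with uniform convergence of the trajectories on compact intervals, and pass to the limit to obtain an extremal trajectory through each point of $\omega(x(\cdot))$. Your write-up is in fact more explicit than the paper's on the passage to the limit in the integral equation and on the verification that $\bar x(t)\in\omega(x(\cdot))$ for every $t\geq 0$.
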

\begin{proof}
Let $z\in\omega(x(\cdot))$ and $t_i\rightarrow+\infty$ a sequence of times such that $x(t_i)\rightarrow z$. By Banach-Alaoglu theorem up to subsequences $A(t_i+\cdot)\overset{w^\ast}{\rightharpoonup} \bar{A}(\cdot)\in L^\infty(\mathbb{R}_+,\mathcal{M})$.
Therefore, $x(t_i+\cdot)$ converges uniformly on compact time intervals to a solution $\bar{x}(\cdot)$ associated with $\bar{A}(\cdot)$ and such that $\bar{x}(0)=z$. Notice that since $x(\cdot)$ is extremal then passing to the limit, $\bar{x}(\cdot)$ is also extremal. This proves that $\omega(x(\cdot))$ is the union of extremal trajectories.

\end{proof}
\begin{remark}
\label{rema}
By the same argument of the previous lemma, if $z(\cdot)$ is a solution of System~$\eqref{sys0}$ and if we use $v_{z(\cdot)}$ to denote $\lim_{t\rightarrow \infty}v(z(t))$,
then the $\omega$-limit set of $z(\cdot)$  is the union of extremal solutions on $v^{-1}(v_{z(\cdot)})$.
\end{remark}
\begin{lemma}
\label{4}
For any $z\in\omega(x(\cdot))$ there exists, in $\omega(x(\cdot))$, a periodic extremal solution $x_{z}(\cdot)$ with $x_{z}(0)=z$.
\end{lemma}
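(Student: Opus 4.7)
The plan is to construct a periodic extremal through $z$ as the limit of suitable time-translations of $x(\cdot)$. Since $\mathcal M$ consists of non-singular matrices, no point of $S$ is stationary; in particular $z$ is non-stationary and by Lemma~\ref{st} admits a transverse section $\Sigma_z\subset S$. By Lemma~\ref{3}, $x(\cdot)$ crosses $\Sigma_z$ at a sequence of points $z_n=x(\tau_n)$ with $\tau_n\to\infty$ and $z_n\to z$ monotonically along $\Sigma_z$.

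Setting $y_n(t):=x(\tau_n+t)$ and $A_n(t):=A(\tau_n+t)$, a Banach--Alaoglu argument of the same kind as in the proof of Lemma~\ref{union} yields, along a subsequence, a weak-$*$ limit $A_\infty\in L^\infty(\mathbb{R}_+,\mathcal M)$ and an extremal $y_\infty$ with $y_\infty(0)=z$, such that $y_n\to y_\infty$ uniformly on compact time intervals and $y_\infty([0,\infty))\subset\omega(x(\cdot))$. Let $T_n:=\tau_{n+1}-\tau_n$; by transversality of $\Sigma_z$ and compactness of $\mathcal M$ there is a uniform positive lower bound $c$ on $T_n$, and one may pass to a further subsequence so that $T_n\to T^{\ast}\in[c,+\infty]$.

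In the favourable case $T^{\ast}<\infty$, uniform convergence on $[0,T^{\ast}+1]$ gives $y_\infty(T^{\ast})=\lim_n y_n(T_n)=\lim_n z_{n+1}=z$, so both $y_\infty(\cdot)$ and its time-shift $y_\infty(T^{\ast}+\cdot)$ are extremal trajectories starting at $z$. Proposition~\ref{p4} applied to their intersection at $z$ yields differentiability of $v$ at $z$, and the forward-uniqueness statement of Proposition~\ref{p3} (available thanks to Condition~G) then forces $y_\infty(T^{\ast}+t)=y_\infty(t)$ for all $t\ge 0$. Hence $y_\infty$ is a $T^{\ast}$-periodic extremal through $z$ lying in $\omega(x(\cdot))$, which serves as the required $x_z$.

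The main obstacle is to rule out the pathological case $T^{\ast}=\infty$. In that case one iterates the construction on $y_\infty$: choose $w\in\omega(y_\infty)\subset\omega(x(\cdot))$ (non-empty and necessarily non-stationary), take a transverse section $\Sigma_w$ through $w$, and observe that every crossing of $\Sigma_w$ by $y_\infty$ lies in $\omega(x(\cdot))\cap\Sigma_w=\{w\}$ by Lemma~\ref{3}. Two consecutive returns $y_\infty(s_1)=y_\infty(s_2)=w$ with $s_1<s_2$, combined with Proposition~\ref{p4}, yield that $y_\infty$ is periodic for $t\ge s_1$ along a closed orbit $\Gamma\subset\omega(x(\cdot))$. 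It then remains the delicate geometric step of showing $z\in\Gamma$: since $T_n\to\infty$, $x(\cdot)$ spends arbitrarily long time intervals between consecutive crossings of $\Sigma_z$ close to $\Gamma$ (the only compact invariant set on which $y_\infty$ can accumulate, in the absence of stationary points), forcing $\Gamma$ to meet $\Sigma_z$, and by Lemma~\ref{3} this intersection must be precisely $\{z\}$. The periodic extremal traversing $\Gamma$ from $z$ is then the desired $x_z(\cdot)$.
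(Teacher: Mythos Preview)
Your argument follows the same underlying route as the paper (extract an extremal $y_\infty$ through $z$ inside $\omega(x(\cdot))$, then use a transverse section together with Lemma~\ref{3}), but wraps it in an unnecessary case distinction and leaves a real gap at the end.

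The gap is in the final paragraph. In the case $T^*=\infty$ you obtain a cycle $\Gamma\subset\omega(x(\cdot))$ on which $y_\infty$ eventually settles, and you must show $z\in\Gamma$. Your justification---``$x(\cdot)$ spends arbitrarily long time intervals close to $\Gamma$, forcing $\Gamma$ to meet $\Sigma_z$''---is not a valid deduction: $\Sigma_z$ is a short arc near $z$, and if $z\notin\Gamma$ there is no reason for $\Gamma$ to pass through $\Sigma_z$, no matter how long $x(\cdot)$ lingers near $\Gamma$ between its returns to $\Sigma_z$. The paper handles this step (showing $x_z$ is periodic on all of $\mathbb{R}_+$, not just on $[t^*,\infty)$) by a local ``step-back'' argument: take a transverse section $\Sigma'$ at $x_z(t^*-\varepsilon)$ for small $\varepsilon>0$; since this point is close to $x_z(t^*)\in\Gamma$, the cycle $\Gamma$ itself must cross $\Sigma'$, so $x_z$ meets $\Sigma'$ both at time $t^*-\varepsilon$ and later along $\Gamma$; Lemma~\ref{3} forces these crossings to coincide, hence $x_z(t^*-\varepsilon)\in\Gamma$, contradicting minimality of $t^*$ unless $t^*=0$. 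Replacing your last paragraph by this argument fixes the proof. Incidentally, applying this reasoning directly (choose $\bar z\in\omega(y_\infty)$ from the outset, as the paper does) eliminates the case split on $T_n$ altogether.

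A smaller wrinkle: in the case $T^*<\infty$, your invocation of Proposition~\ref{p4} is formally off, since $y_\infty(\cdot)$ and its shift $y_\infty(T^*+\cdot)$ both \emph{start} at $z$ rather than arrive there in the sense of the definition preceding Proposition~\ref{p4}. You do have one genuine arrival ($y_\infty(T^*)=z$), and the conclusion can be rescued, but not quite as written.
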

\begin{proof}
Consider an extremal trajectory $x_{z}(\cdot)$ starting at $z$ as given by Lemma~\ref{union}.
Since  $\{x_{z}(t): t\geq0\}$ is a subset of the compact set $\omega(x(\cdot))$, it admits an accumulation point $\bar{z}\in\omega(x_{z}(\cdot))$. Let $\Sigma$ be a transverse section at $\bar{z}$. Note that all the points of intersection of $x_{z}(\cdot)$ with $\Sigma$ are in $\omega(x(\cdot))$. Therefore, by Lemma $\ref{3}$, all these points coincide with $\bar{z}$ so that we deduce that  $x_{z}(\cdot)$ is periodic on $[t^*,+\infty)$ for some $t^*\geq 0$. 
Since, whenever $t^*>0$ and $\varepsilon\in (0,t^*]$ is small enough, any transverse section at $x_{z}(t^*-\varepsilon)$ has a unique point of intersection with $x_{z}(\cdot)$, one easily deduces that $x_{z}(\cdot)$ is periodic on $\mathbb{R}_+$. 

\end{proof}
\textit{Proof of Theorem~\ref{p2}.} Let $x_{z}(\cdot)$ be as in Lemma~\ref{4} and $T_z$ be its period. We define 
$$
\Gamma_z:=\{x_{z}(t):t\in[0,T_z]\},~~\Gamma:=\{x(t):t\geq0\}.
$$
Without loss of generality we assume that $\Gamma\cap\Gamma_z=\emptyset$ for every $z$. Indeed if it is not the case, we get $\omega(x(\cdot))=\Gamma_z$ by Corollary~\ref{c2}. Hence Theorem~\ref{p2} is proved.

Assume now that there exists an infinite number of two by two disjoint cycles. We select among them a countable sequence $(\Gamma_{n})_{n\geq 1}$ and, for each $n$, we pick a point $x_n\in\Gamma_n$. Since $\Gamma_n\subset\omega(x(\cdot))$, then up to a subsequence one has that $x_n\rightarrow\bar{x}\in\omega(x(\cdot))$ as $n\rightarrow+\infty$. Let $\Gamma_{\bar{x}}$ be a periodic trajectory passing through $\bar{x}$ and  $\Sigma$ a local transverse section passing through $\bar{x}$. Then for $n$ large enough $\Gamma_n$ intersects $\Sigma$ at some $y_n\in\omega(x(\cdot))$. By virtue of Lemma $\ref{3}$, $\omega(x(\cdot))$ intersects $\Sigma$ at only one point which implies that for $n$ sufficiently large, $y_n=\bar{x}$. Hence $\Gamma_n=\Gamma_{\bar{x}}$ contradicting the fact that the $\Gamma_n$ are two by two disjoint.

We thus get that there exists a finite number of distinct periodic trajectories in $\omega(x(\cdot))$. Since $\omega(x(\cdot))$ is connected and the supports of these periodic trajectories  are pairwise disjoint and closed, we conclude the proof of Theorem~\ref{p2}.

$\hspace{\fill}\Box$

\section{Asymptotic properties of Barabanov linear switched systems}
\label{s-33}
In this section we analyze a special case of System~\eqref{sys0} introduced by Barabanov in~\cite{Barabanov3}. 
\subsection{Definition and statement of the main results}
\begin{definition}[\textbf{Barabanov linear switched system}]\label{Bar-LSS}
A Barabanov linear switched system is a linear switched system associated with the convex and compact set $\mathcal{M}$ of matrices 
defined as
$$
\mathcal{M}:=conv\{A,A+bc^T\}=\{A+ubc^T:u\in[0,1]\},
$$
where $A\in \R^{3\times 3}$ and $b,c\in\R^3$ verify the following: $A$ and $A+bc^T$ are Hurwitz, the pairs $(A,b)$ and $(A^T,c)$ are controllable and $\rho(\M)=0$.
Trajectories of this switched system are solutions of 
\begin{align}
\label{sys}
\dot{x}(t)= Ax(t)+u(t)bc^Tx(t),
\end{align}
where $x(\cdot)$ takes values in $\R^3$ and $u(\cdot)$ is a measurable function taking values on $[0,1]$.
 \end{definition}
Note that with the above assumptions, it is easy to see that the set $\M$ defining a Barabanov linear switched system must be irreducible. We use $v(\cdot)$ and $S$ to denote the Barabanov norm defined in Eq.~\eqref{norm} and the corresponding unit sphere, respectively.

Associated with System~\eqref{sys}, we introduce its adjoint system 
\begin{align}
\label{ad1}
\dot{l}(t)=-A^T l(t)-u(t)cb^T l(t).
\end{align}
In this section, we study the asymptotic behaviour of the extremal solutions of the above switching system. To describe our results, we need the following definitions.
\begin{definition}
\label{def-s1}
Consider an extremal trajectory $x(\cdot):\mathbb{R}_+\rightarrow \mathbb{R}^3$ and the corresponding matrix function $A(\cdot):\mathbb{R}_+\rightarrow \mathcal{M}$. 
\begin{itemize}
\item A time $t\geq 0$ is said to be  regular if there exists $\varepsilon>0$ such that $A(\cdot)$ is constant on $(t-\varepsilon,t+\varepsilon)\cap \mathbb{R}_+$. A time $t\geq 0$ which is not regular is called a switching time.
\item A switching time $t>0$ is said to be  isolated if there exists $\varepsilon>0$ such that $A(\cdot)$ is constant on both $(t-\varepsilon,t)$ and $(t,t+\varepsilon)$ (with different values for the constants in $\mathcal{M}$).  
\item A bang arc is a piece of extremal trajectory defined on a time interval where $A(\cdot)$ is constant. If all the switching times of an extremal trajectory are isolated, then the trajectory is made of bang arcs and is said to be a bang-bang trajectory.
\end{itemize}
\end{definition}
Note that these definitions can still be introduced for linear switched systems in any dimension $n\geq 2$.
 
We now state the main results of this section.  
\begin{theorem}\label{main-th}
Consider a Barabanov linear switched system defined in \eqref{sys} and $S$ its unit Barabanov sphere. The following alternative holds true:
\begin{itemize}
\item[$(a)$] either there exists on $S$ a $1$-parameter family of periodic trajectories $s\mapsto\gamma_s(\cdot)$ defined on a closed interval $[0,s_*]$ which is injective and continuous as a function with values in $\mathcal{C}^0([0,\tau])$ for any $\tau>0$ and each curve $\gamma_s(\cdot)$ has, on its period, four bang arcs for $s\in (0,s_*)$ and two bang arcs for $s\in\{0,s_*\}$;
\item[$(b)$] or there exists a finite number of periodic trajectories on $S$, each of them having  four bang arcs.
\end{itemize}
\end{theorem}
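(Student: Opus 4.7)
The plan is to classify the periodic trajectories of \eqref{sys} on $S$ by combining the announced Corollary~\ref{cor1} (every periodic trajectory is bang--bang with two or four arcs) with the real-analyticity of the bang-bang closure equations and the Jordan-type nesting provided by Corollary~\ref{c2}.

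First I would encode periodic trajectories analytically. A 4-bang periodic orbit on $S$ is specified by an initial point $x_0\in S$, four durations $(t_1,\dots,t_4)\in\R_+^4$ and an initial mode, subject to the closure equation $e^{t_4 A_{u_4}}\cdots e^{t_1 A_{u_1}}x_0=x_0$ together with the three switching conditions on the adjoint trajectory furnished by Theorem~\ref{th2}. Since $(t,u)\mapsto e^{tA_u}$ is real-analytic, the sets $\mathcal{P}_4$ and $\mathcal{P}_2$ of 4-bang and 2-bang periodic data form real-analytic subvarieties of a finite-dimensional analytic manifold; a dimension count (four duration parameters minus three switching equations, modulo time translation) indicates that the generic dimension of $\mathcal{P}_4$ is one and that of $\mathcal{P}_2$ is zero.

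Next I would exploit the nesting structure on $S$: by Proposition~\ref{p4} and Condition~G two distinct periodic trajectories on $S$ cannot cross, and by the Jordan curve theorem each separates $S$ into two topological disks. Consequently a connected component of $\mathcal{P}_4$ corresponds to a nested family of periodic trajectories on $S$, which, by the implicit function theorem applied to the analytic system above, is locally parameterized by a single transverse coordinate $s$. Extending this parameterization maximally to $s\in(s_-,s_+)$, I would analyze the boundary behaviour: compactness of $S$ together with uniform boundedness of the switching times (obtained from the spectral properties of $A$ and $A+bc^T$ combined with $\rho(\mathcal{M})=0$) lets one extract a limit trajectory as $s\to s_\pm$. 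If no switching time collapsed, this limit would itself be a regular 4-bang periodic orbit at which the implicit function theorem extends the family, contradicting maximality. Hence one of the four durations must tend to zero at each endpoint, producing a 2-bang limit and yielding the closed parameter interval $[0,s_*]$ claimed in case $(a)$, with the continuity in $\mathcal{C}^0([0,\tau])$ being a consequence of the analytic dependence of $e^{tA_u}$ on its data.

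The dichotomy then follows from a case analysis on the existence of a 2-bang periodic trajectory on $S$. If one exists, it lies at the endpoint of a family constructed as above, giving case $(a)$. If none exists, every connected component of $\mathcal{P}_4$ has empty boundary in $S$; by the preceding step this forces each such component to consist of a single point, and an analytic subvariety of a compact analytic manifold containing only isolated points is finite, yielding case $(b)$. The main obstacle is Step three: rigorously ruling out degenerate limiting behaviour of a maximal family, such as unbounded period, simultaneous collapse of several switching times, or accumulation at a stationary point of $\mathcal{M}$. This will require combining the uniform bounds on the bang durations supplied by the PMP switching conditions with the Hurwitz hypothesis on $A$ and $A+bc^T$, and a delicate desingularization at points where $\mathcal{P}_4$ fails to be a smooth 1-manifold.
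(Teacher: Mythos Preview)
Your overall architecture---encode periodic orbits as zeros of an analytic system, use Jordan-curve nesting on $S$, and argue that maximal families terminate when a switching time collapses---is close to the paper's, but two concrete gaps prevent it from going through. First, you parameterize by $(x_0,t_1,\dots,t_4)$ with $x_0\in S$ and claim $\mathcal{P}_4,\mathcal{P}_2$ are analytic subvarieties of an analytic manifold; but the Barabanov sphere $S$ is only Lipschitz a priori, so this space carries no analytic structure and you cannot appeal to analytic-variety theory on it. The paper avoids this by encoding a periodic trajectory through the four durations alone, via the single scalar equation $f(t_1,\dots,t_4)=\det\big(e^{t_4A_2}e^{t_3A_1}e^{t_2A_2}e^{t_1A_1}-I_3\big)=0$ on $\R^4$; the initial point is then recovered as the (unique, by Proposition~\ref{resolv}) eigenvector for the eigenvalue $1$. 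Second, your local 1-parameter structure comes from the implicit function theorem plus a heuristic dimension count, but you never verify the non-degeneracy the IFT requires, and at singular points it simply fails. The paper replaces this by the \L ojasiewicz structure theorem and resolution of singularities, which stratify $Z_f$ without any transversality hypothesis.

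The more serious missing idea is how to exclude closed one-dimensional components (circles) and higher-dimensional strata in $Z_f\cap(\R_+)^4$. Your sentence ``by the preceding step this forces each such component to consist of a single point'' does not follow: a boundaryless one-dimensional component could be a circle, and your maximal-interval argument says nothing about that case. The paper handles this through a dedicated geometric lemma (Lemma~\ref{lem!!}): no periodic trajectory passes through the point $x_*\in P_c\cap S$ with $c^Tx_*=c^TAx_*=0$. Sending each point of a putative Jordan curve in $Z_f^+$ to the corresponding starting point on $P_c\cap S$ yields a continuous injective curve confined to one half-arc $\{c^TAx>0\}$ whose closure would be forced to contain $\pm x_*$, a contradiction; the same device kills strata of dimension $\geq 2$. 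Only after this does one know that each non-isolated component is a compact interval with endpoints on $\partial(\R_+)^4$, giving the 2-bang limits of case~$(a)$. Finally, your dichotomy is inverted relative to the paper's: the correct split is ``infinitely many periodic trajectories versus finitely many'', and your claim that an existing 2-bang orbit necessarily sits at the endpoint of a 4-bang family is precisely what has to be proved, not assumed.
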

In the previous result we actually suspect that the alternative described by Item $(a)$ never occurs. 
{
\begin{proposition}
\label{omega0}
Every trajectory of a Barabanov linear switched system of the form~\eqref{sys} converges either to zero or to a periodic trajectory or to the set union of a $1$-parameter family of periodic trajectories (as described in Item $(a)$ in Theorem~\ref{main-th}).
\end{proposition}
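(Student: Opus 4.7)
\textbf{Proof plan for Proposition~\ref{omega0}.} The plan is to reduce the analysis of a general trajectory $z(\cdot)$ to that of the extremal trajectories contained in its $\omega$-limit set $\omega(z(\cdot))$ on the Barabanov sphere $S$, and then to invoke the Poincar\'e--Bendixson Theorem~\ref{p2} together with the classification of periodic orbits given by Theorem~\ref{main-th}.

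First, since $v(z(\cdot))$ is non-increasing by Theorem~\ref{th1}, the limit $v_\infty:=\lim_{t\to+\infty}v(z(t))\geq 0$ exists. If $v_\infty=0$ then $z(t)\to 0$ and we are done. Otherwise, a homothety reduces the problem to the case $v_\infty=1$, so that $\omega(z(\cdot))\subset S$. By Remark~\ref{classic}, $\omega(z(\cdot))$ is non-empty, compact and connected, and by Remark~\ref{rema} it is a union of extremal trajectories; more precisely, the weak-$\ast$ extraction argument of Lemma~\ref{union}, applied to shifts $z(t_i+\cdot)$ of $z(\cdot)$ rather than of an extremal trajectory, shows that through every point of $\omega(z(\cdot))$ passes an extremal trajectory entirely contained in $\omega(z(\cdot))$. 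Theorem~\ref{p2} then guarantees that each such extremal trajectory converges forward to a periodic trajectory on $S$, which by closedness also lies in $\omega(z(\cdot))$. In particular $\omega(z(\cdot))$ contains at least one periodic orbit $\Gamma$.

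I would then split according to the dichotomy of Theorem~\ref{main-th}. In case~(b) the periodic orbits on $S$ are finite and isolated and the aim is to show $\omega(z(\cdot))=\Gamma$. Arguing by contradiction, pick $y\in\omega(z(\cdot))\setminus\Gamma$; by Corollary~\ref{c2} the extremal trajectory through $y$ lying in $\omega(z(\cdot))$ remains in the closure of the connected component of $S\setminus\Gamma$ containing $y$ and converges to a periodic orbit $\Gamma'\neq\Gamma$ inside that closure, so that $\Gamma'\subset\omega(z(\cdot))$ as well. I would then pick a local transverse section $\Sigma$ (Lemma~\ref{st}) at a regular interior point of a bang arc of $\Gamma$; since $\Gamma\subset\omega(z(\cdot))$, the trajectory $z(\cdot)$ crosses $\Sigma$ at a sequence of times $t_n\to\infty$ with $z(t_n)\to\Gamma\cap\Sigma$. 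The presence of the heteroclinic extremal arc in $\omega(z(\cdot))$ between $\Gamma$ and $\Gamma'$ would force the crossings of $\Sigma$ by $z(\cdot)$ to accumulate at a second, distinct point of $\Sigma$, contradicting the asymptotic monotonicity of these crossings, which is itself obtained by combining Lemma~\ref{3} with the asymptotic uniqueness of extremal trajectories near $\Gamma$ provided by Proposition~\ref{p3}. Hence $\omega(z(\cdot))=\Gamma$ in case~(b).

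In case~(a), a similar strategy applies: using the continuity of the family $\{\gamma_s\}_{s\in[0,s_*]}$ and the connectedness of $\omega(z(\cdot))$, one shows that the set of parameters $s$ for which $\gamma_s\subset\omega(z(\cdot))$ is a closed subset of $[0,s_*]$ which must reduce either to a singleton or to the whole interval, the intermediate subfamilies being ruled out by a transverse-section argument analogous to the one above, applied to a section cutting across the topological annulus foliated by $\{\gamma_s\}_{s\in[0,s_*]}$. The main obstacle throughout is clearly the justification of the ``monotonicity of crossings'' of transverse sections by $z(\cdot)$: since $z(\cdot)$ is neither extremal nor necessarily injective, the classical Jordan-curve argument underlying Lemma~\ref{3} does not apply directly and has to be replaced by a careful asymptotic comparison between $z(\cdot)$ and the essentially unique extremal flow on $\omega(z(\cdot))$, exploiting uniqueness of extremal trajectories at points of differentiability of $v$ (Proposition~\ref{p3}) together with the weak-$\ast$ compactness of shifted control laws.
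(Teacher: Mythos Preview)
Your initial reduction is fine and matches the paper: $v_\infty>0$, rescale so $\omega(z(\cdot))\subset S$, and use Remark~\ref{rema} and Theorem~\ref{p2} to get at least one periodic orbit $\Gamma\subset\omega(z(\cdot))$.

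The gap is exactly the one you flag yourself, and it is genuine: Lemma~\ref{3} is a Jordan-curve argument for an \emph{extremal} trajectory lying on the two-dimensional sphere $S$. Your trajectory $z(\cdot)$ is not on $S$ (only asymptotic to it) and is not extremal, so neither the monotonicity of crossings nor the ``$\omega$-limit meets $\Sigma$ in at most one point'' statement is available for $z(\cdot)$. Your proposed substitute---``asymptotic comparison with the essentially unique extremal flow'' via Proposition~\ref{p3}---does not close the gap: Proposition~\ref{p3} concerns extremal trajectories starting at differentiability points of $v$, and gives no uniform control on how a general trajectory shadows them over arbitrarily long times. In particular, nothing prevents $z(\cdot)$ from crossing $\Sigma$ non-monotonically while still approaching $S$. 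The case~(a) sketch is even more incomplete, since a transverse section across the annulus foliated by $\{\gamma_s\}$ need not even exist globally.

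The paper avoids this difficulty by a different mechanism. It first orders the finitely many isolated periodic orbits and continua $\tilde\Gamma_1<\cdots<\tilde\Gamma_q$ intersecting $\omega(z(\cdot))$ (Remark~\ref{order}), then proves a local dichotomy (Lemma~\ref{stability0}): on each side of a given $\tilde\Gamma_i$, either all nearby extremal trajectories converge to $\tilde\Gamma_i$ or none does. A simple combinatorial argument then produces some $\tilde\Gamma_i$ that is locally attractive on both sides. Finally, instead of monotonicity of crossings for $z(\cdot)$, the paper proves a \emph{return-time} estimate (Lemma~\ref{finite1}): once $z(\cdot)$ enters a small transverse neighborhood $\mathcal{B}_\varepsilon$ of $P_c\cap\tilde\Gamma_i$, the successive return times to $\mathcal{B}_\varepsilon$ lie in a fixed compact interval $(T_1,T_2)$. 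This is obtained by contradiction, passing to weak-$\ast$ limits of shifts of $z(\cdot)$, and it is what allows one to conclude $\omega(z(\cdot))\subset\tilde\Gamma_i$ without ever invoking planar monotonicity for $z(\cdot)$ itself.
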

}

\subsection{Preliminary results}
We will make use several times in the sequel of the following lemma.
\begin{lemma}\label{detB}
The set $\mathcal{M}=conv\{A,A+bc^T\}$ associated with a Barabanov linear switched system is made of non-singular matrices and $1+b^T(A^T)^{-1}c>0$.
\end{lemma}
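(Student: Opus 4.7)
My plan is to use the matrix determinant lemma, exploit the specific structure of Hurwitz matrices in dimension three to fix signs, and then interpolate linearly in $u$.

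First, for any $u\in\mathbb{R}$, since $A$ is invertible (being Hurwitz), the matrix determinant lemma gives
\[
\det(A+u b c^T)=\det(A)\bigl(1+u\, c^T A^{-1} b\bigr)=\det(A)\bigl(1+u\, b^T (A^T)^{-1} c\bigr).
\]
Applying this at $u=1$ yields the identity
\[
1+b^T(A^T)^{-1}c \;=\; \frac{\det(A+bc^T)}{\det(A)}.
\]

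The key step is then to determine the sign of each determinant. Since both $A$ and $A+bc^T$ are real $3\times 3$ Hurwitz matrices, their eigenvalues are either three negative reals, or one negative real together with a complex conjugate pair whose common real part is negative. In the first case the determinant is a product of three negative numbers, hence negative; in the second case the determinant equals the product of the negative real eigenvalue with $|\lambda|^2>0$, again negative. (Equivalently, the characteristic polynomial of a Hurwitz matrix has all positive coefficients, so $p(0)=(-1)^n\det(A)>0$, giving $\det(A)<0$ for $n=3$.) Therefore $\det(A)<0$ and $\det(A+bc^T)<0$, and the ratio above is strictly positive, proving $1+b^T(A^T)^{-1}c>0$.

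For the non-singularity claim, note that every matrix in $\mathcal{M}$ is of the form $A+u b c^T$ with $u\in[0,1]$, so
\[
\det(A+u b c^T)=\det(A)\bigl(1+u\, c^T A^{-1} b\bigr).
\]
The affine function $u\mapsto 1+u\,c^T A^{-1}b$ equals $1>0$ at $u=0$ and equals $1+c^TA^{-1}b>0$ at $u=1$ by the previous paragraph; by affinity it is therefore strictly positive on the whole segment $[0,1]$, so $\det(A+ubc^T)$ never vanishes on $[0,1]$. This shows that every element of $\mathcal{M}$ is non-singular. There is no serious obstacle here: once one observes the dimension-three sign computation $\det(A)<0$, both conclusions drop out together, and no use of the controllability or irreducibility hypotheses is needed at this stage.
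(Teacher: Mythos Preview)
Your proof is correct and follows essentially the same approach as the paper: both use the matrix determinant lemma to write $\det(A+ubc^T)=\det(A)(1+u\,b^T(A^T)^{-1}c)$, observe that this is affine in $u$ with negative values at $u=0$ and $u=1$ (since $A$ and $A+bc^T$ are $3\times 3$ Hurwitz), and conclude both the non-singularity on $[0,1]$ and the positivity of $1+b^T(A^T)^{-1}c$. Your version is in fact slightly more explicit than the paper's in justifying why a $3\times 3$ Hurwitz matrix has negative determinant.
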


\begin{proof}
For every $u\in[0,1]$, we denote by $\delta_u:=\det(A+ubc^T)$. Then we have 
$$
\delta_u=\det(A)\det(I_3+uA^{-1}bc^T)=(1+ub^T(A^T)^{-1}c)\det(A).
$$
It implies that $u\mapsto \delta_u$ is linear. Since both values at $u=0$ and $u=1$ are negative, $\delta_u$ must be negative as well 
for every $u\in[0,1]$, hence the conclusion.

\end{proof}

We define the following functions on $\mathbb{R}_+$:
$$
\phi_{b}(t):=b^T l(t),~~ \phi_{c}(t):=c^T x(t)~~\text{and}~~\phi(t):=\phi_{b}(t)\phi_{c}(t).
$$
The function $\phi(\cdot)$ is called the \emph{switching function} associated with the switching law $A(\cdot)$.

\begin{remark}
The maximality condition Eq.\eqref{c4} is equivalent to the following,
\begin{align}
\max_{u\in[0,1]}{u\phi(t)}=u(t)\phi(t)=-l^T(t)Ax(t)
\quad a.e.\ t\geq 0.
\end{align}
Notice that if $\phi(\bar{t})\neq0$ for some $\bar{t}\geq0$ then there exists $\epsilon>0$ such that $\phi(\cdot)$ never vanishes on $(\bar{t}-\epsilon,\bar{t}+\epsilon)\cap\mathbb{R}_+$ since $\phi(\cdot)$ is continuous. Therefore we have that  
$$
u(t)=\big(1+sgn(\phi(t))\big)/2=\big(1+sgn(\phi(\bar{t}))\big)/2, \quad \forall t\in(\bar{t}-\epsilon,\bar{t}+\epsilon)\cap\mathbb{R}_+.
$$
Hence $A(\cdot)$ is constant (equal either to $A$ or $A+bc^T$) on $(\bar{t}-\epsilon,\bar{t}+\epsilon)\cap\mathbb{R}_+$ which implies that $\bar{t}$ is a regular time of $A(\cdot)$.
\end{remark}

We then must consider the following differential inclusion 
\begin{align}
\label{sa}
\begin{array}{l}
\dot{x}(t)=Ax(t)+u(t)bc^T x(t),\\
\dot{l}(t)=-A^T l(t)-u(t)cb^T l(t),\\
u(t)\in \big(1+sgn(\phi(t))\big)/2.
\end{array}
\end{align}
We will say in the sequel that a solution $(x(\cdot),l(\cdot))$ of System~\eqref{sa} is extremal if $x(\cdot)$ is extremal in the sense of Definition~\ref{def-ext} and $l(t)\in\partial v(x(t))$ for every $t\geq0$. 
In the following we study the structure of the zeros of the switching function $\phi(\cdot)$.

\begin{lemma}
\label{lem0}
Let $\bar t\geq0$ be such that $\phi_b(\bar{t})=0$. Then $\phi_b(\cdot)$ is differentiable at $\bar t$ and we have that $\dot{\phi_b}(\bar t)=-b^T A^T l(\bar t)$. Moreover the following statements hold true:
\begin{enumerate}
\item[$(a)$] If $\dot{\phi_b}(\bar{t})\neq0$ then there exists $\epsilon>0$ such that $|\phi_b(t)|>0$ for $t\in (\bar t-\epsilon,\bar t+\epsilon)\cap\mathbb{R}_+$, $t\neq \bar t$ and $sgn\big(\phi_b|_{(\bar{t}-\epsilon,\bar{t})\cap\mathbb{R}_+}\big)=-sgn\big(\phi_b|_{(\bar{t},\bar{t}+\epsilon)}\big)$.
\item[$(b)$] If $\dot{\phi_b}(\bar{t})=0$ then $\phi_b(\cdot)$ is twice differentiable at $t=\bar t$ and $\ddot{\phi_b}(\bar{t})\neq 0$. 
In particular, there exists $\epsilon>0$ such that  $|\phi_b(t)|>0$ for $t\in(\bar t-\epsilon,\bar t+\epsilon)\cap\mathbb{R}_+$, $t\neq \bar t$ and $sgn\big(\phi_b|_{(\bar{t}-\epsilon,\bar{t})\cap\mathbb{R}_+}\big)=sgn\big(\phi_b|_{(\bar{t},\bar{t}+\epsilon)}\big)$.
\end{enumerate}
\end{lemma}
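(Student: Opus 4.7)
My plan is based on the integral form of the adjoint ODE
\begin{equation*}
l(t)-l(\bar t) = -\int_{\bar t}^{t} A^T l(s)\,ds - \int_{\bar t}^{t} u(s)\, c\,\phi_b(s)\,ds,
\end{equation*}
evaluated near a zero $\bar t$ of $\phi_b$. Dividing by $t-\bar t$, the first integral tends to $-A^T l(\bar t)$ by continuity of $l$, while the second is bounded in norm by $\|c\|\,\max_{s\in[\bar t,t]}|\phi_b(s)|$, which vanishes since $\phi_b$ is continuous and $\phi_b(\bar t)=0$. Hence $l$ is differentiable at $\bar t$ with $\dot l(\bar t)=-A^T l(\bar t)$, and so is $\phi_b(t)=b^T l(t)$, with $\dot\phi_b(\bar t)=-b^T A^T l(\bar t)$. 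This gives the main claim.

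For item $(a)$, assuming $\dot\phi_b(\bar t)\neq 0$, the first-order Taylor expansion $\phi_b(t)=\dot\phi_b(\bar t)(t-\bar t)+o(t-\bar t)$ immediately yields $|\phi_b(t)|>0$ on a punctured neighborhood of $\bar t$ and the claimed sign change across $\bar t$. For item $(b)$, I would iterate the integral formula one order higher. Under $\dot\phi_b(\bar t)=0$ we have $\phi_b(s)=o(s-\bar t)$, so the contribution $(b^Tc)\int_{\bar t}^{t} u(s)\phi_b(s)\,ds$ is $o((t-\bar t)^2)$. For the remaining term, let $g(s):=b^T A^T l(s)$; then $g(\bar t)=-\dot\phi_b(\bar t)=0$, and applying the already-proved first-order expansion of $l$ at $\bar t$ gives $g(s)=-b^T(A^T)^2 l(\bar t)(s-\bar t)+o(s-\bar t)$. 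Integrating produces
\begin{equation*}
\phi_b(t) \;=\; \tfrac{1}{2}\,b^T(A^T)^2 l(\bar t)\,(t-\bar t)^2 \;+\; o\bigl((t-\bar t)^2\bigr),
\end{equation*}
so $\phi_b$ is twice differentiable at $\bar t$ with $\ddot\phi_b(\bar t)=b^T(A^T)^2 l(\bar t)$.

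The decisive step is showing $\ddot\phi_b(\bar t)\neq 0$. Suppose, for contradiction, that it vanishes. Then, together with $\phi_b(\bar t)=0$ and $\dot\phi_b(\bar t)=0$, the vector $l(\bar t)$ is orthogonal to $b$, to $Ab$ (since $b^T A^T l(\bar t)=(Ab)^T l(\bar t)$), and to $A^2 b$ (since $b^T(A^T)^2 l(\bar t)=(A^2 b)^T l(\bar t)$). Controllability of the pair $(A,b)$, which is a standing hypothesis of a Barabanov linear switched system, means by the Kalman rank condition that $\{b, Ab, A^2 b\}$ spans $\mathbb{R}^3$, forcing $l(\bar t)=0$. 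This contradicts the fact (cf.\ Remark~\ref{adj11}) that $l(\cdot)$ takes values in the dual unit sphere $S^0$, hence never vanishes. Thus $\ddot\phi_b(\bar t)\neq 0$, and the second-order expansion above shows that $\phi_b(t)$ has the constant sign of $\ddot\phi_b(\bar t)$ on both sides of $\bar t$ in a punctured neighborhood, yielding the conclusion of $(b)$. The controllability argument at this last step is the only non-routine ingredient; all other steps are successive Taylor expansions extracted from the integrated adjoint equation.
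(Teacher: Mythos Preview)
Your proof is correct and follows essentially the same route as the paper's: both arguments extract successive Taylor expansions of $\phi_b$ at $\bar t$ from the adjoint equation $\dot l = -A^T l - u\, c\,\phi_b$, and both conclude $\ddot\phi_b(\bar t)=b^T(A^T)^2 l(\bar t)\neq 0$ via the Kalman controllability of $(A,b)$ and the nonvanishing of $l(\bar t)$. The only stylistic difference is that you work directly with the integrated form of the ODE to establish differentiability at the specific point $\bar t$, whereas the paper first writes the a.e.\ identity $\dot\phi_b(t)=-b^TA^T l(t)-u(t)(b^Tc)\phi_b(t)$ and reads off the sign of $\dot\phi_b$ (and then of $\phi_b$) near $\bar t$; this makes your version slightly more careful on the pointwise differentiability claim, but the substance is identical.
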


\begin{proof}
Since $l(\cdot)$ is absolutely continuous and by Equation~\eqref{sa}, $\dot{\phi_b}(t)$ is well-defined and equal to $-b^TA^T l(t)-u(t)(b^Tc) \phi_b( t)$ for  almost every $t>0$. In particular if $\phi_b(\bar t)=0$ then $\dot{\phi_b}(\bar t)=-b^TA^T l(\bar t)$ and $\dot\phi_b(t)$ keeps the same sign as  $\dot\phi_b(\bar t)$ in a neighborhood of $\bar t$, if the latter derivative is nonzero, proving Item $(a)$.

Let us prove Item $(b)$. Since $\dot{\phi_b}(\bar t)=0$, one has that $\phi_b(t)=o(t-\bar{t})$ in a neighborhood of $\bar{t}$. Therefore 
$\dot\phi_b(t)=b^T(A^T)^2l(\bar{t})(t-\bar{t})+o(t-\bar{t})$  in a neighborhood of $\bar{t}$. If $b^T(A^T)^2l(\bar{t})=0$ then the vectors $b,Ab$ and $A^2b$ would all be perpendicular to the non zero vector $l(\bar{t})$, contradicting the fact that the pair $(A,b)$ is controllable. Therefore $\dot\phi_b(t)$ is equivalent to $b^T(A^T)^2l(\bar{t})(t-\bar{t})$  in a neighborhood of $\bar{t}$, implying that $\ddot\phi_b(\bar{t})$ is defined and different from zero. If $\mu=sgn(b^T(A^T)^2l(\bar{t}))$,
then there exists $\varepsilon>0$ such that $\mu \dot{\phi_b}(s)<0$ for $s\in(\bar t-\epsilon,\bar t)$ and 
$\mu \dot{\phi_b}(s)>0$ for $s\in(\bar t,\bar t+\epsilon)$. Hence $\mu \phi_b(s)>\phi_b(\bar t)=0$ for $s\in(\bar t-\epsilon,\bar t+\epsilon)\cap\mathbb{R}_+$, $s\neq \bar t$. 

\end{proof}
The previous result tells us in particular that the zeros of the function $\phi_b(\cdot)$ are isolated. A result analogous to Lemma~\ref{lem0} holds  when one replaces the function $\phi_b(\cdot)$ by $\phi_c(\cdot)$, since the pair $(A^T,c)$ is controllable, and consequently the zeros of the function $\phi_b(\cdot)$ are also isolated.
We then have the following proposition.
\begin{proposition}
\label{pr1bis}
The zeros of the switching function $\phi(\cdot)$ are isolated and, in a neighborhood of any such zero $\bar{t}$, the sign of $\phi(\cdot)$ solely depends on the  value $(x(\bar{t}),l(\bar{t}))$. Moreover the functions $b^T l(\cdot)$ and $c^T x(\cdot)$ change sign infinitely many times.

\end{proposition}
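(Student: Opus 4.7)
The plan is to handle the three pieces of the statement separately, since they differ substantially in difficulty.

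For the \textbf{isolated zeros of $\phi$}, I would first derive the direct analogue of Lemma~\ref{lem0} for $\phi_c(\cdot)$: at a zero $\bar t$ of $\phi_c$ one has $\dot\phi_c(\bar t)=c^T A\,x(\bar t)$ and, if this derivative also vanishes, $\ddot\phi_c(\bar t)=c^T A^2 x(\bar t)$. If this were zero too, $x(\bar t)$ would be orthogonal to $c,A^T c,(A^T)^2c$ and hence vanish by controllability of $(A^T,c)$, contradicting $x(\bar t)\in S\setminus\{0\}$. Zeros of $\phi_c$ are therefore isolated, and the zero set of $\phi=\phi_b\phi_c$, being the union of two locally finite sets, is isolated as well. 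For the \textbf{sign determination} near a zero $\bar t$, Lemma~\ref{lem0} shows that the sign of $\phi_b$ near such a point is controlled by $\dot\phi_b(\bar t)=-(Ab)^T l(\bar t)$ or (if that vanishes) by $\ddot\phi_b(\bar t)=(A^2b)^T l(\bar t)$, both depending only on $l(\bar t)$. Symmetrically, the sign of $\phi_c$ near one of its zeros depends only on $x(\bar t)$. The local sign of $\phi$ is therefore a function of $(x(\bar t),l(\bar t))$ alone.

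For the \textbf{infinitely many sign changes} assertion, I would argue by contradiction. First, $\phi$ itself must change sign infinitely often: otherwise $\phi$ has constant sign for $t\geq T$, the maximality condition in~\eqref{sa} forces $u(\cdot)$ to be constant almost everywhere on $[T,+\infty)$, and the extremal trajectory then satisfies $\dot x=Mx$ with $M\in\{A,A+bc^T\}$ Hurwitz, so $\|x(t)\|\to 0$, contradicting $x(t)\in S$. Assume now, for contradiction, that $\phi_b$ has only finitely many sign changes (the case of $\phi_c$ is symmetric). Then $\phi_b$ keeps a constant sign, say $\phi_b\geq 0$, on some $[T,+\infty)$, and since every sign change of $\phi$ beyond $T$ must come from a sign change of one of the two factors, $\phi_c$ changes sign infinitely many times.

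Ruling out this mixed configuration is the main obstacle. My plan is to invoke the Poincar\'e-Bendixson Theorem~\ref{p2} (applicable under Condition~G together with the non-singularity of $\mathcal{M}$ from Lemma~\ref{detB}) to extract a nontrivial periodic extremal trajectory $(\bar x(\cdot),\bar l(\cdot))$ of period $P$ in the $\omega$-limit set of $(x(\cdot),l(\cdot))$, along which $\bar\phi_b\geq 0$ and $\bar\phi_c$ still changes sign. Integrating the adjoint equation $\dot{\bar l}=-A^T\bar l-u\bar\phi_b c$ over $[0,P]$ and contracting with $b^T$ produces
\[
\int_0^P \bar\phi_b(t)\,dt=-(c^T A^{-1}b)\int_0^P u(t)\bar\phi_b(t)\,dt,
\]
and since $\bar\phi_b\not\equiv 0$ (otherwise iterating the derivative relations for $b^T\bar l$ together with controllability of $(A,b)$ would force $\bar l\equiv 0$), the left-hand side is strictly positive, constraining $c^T A^{-1}b<0$. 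Integrating the primal equation symmetrically gives an analogous identity for $\bar\phi_c$; combining these with the extremality relation $l^T A x=-u\phi$, the constraint $1+b^T(A^T)^{-1}c>0$ of Lemma~\ref{detB}, and the Routh--Hurwitz sign information on $\det A<0$ and $\det(A+bc^T)<0$, one reaches a contradiction. The delicate sign analysis in this last step, where the Hurwitz property of \emph{both} mode matrices together with the controllability of \emph{both} pairs $(A,b)$ and $(A^T,c)$ are essential, is the central difficulty of the proof.
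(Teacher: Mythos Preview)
Your treatment of the first two assertions (isolated zeros of $\phi$ and the fact that the local sign is determined by $(x(\bar t),l(\bar t))$) matches the paper's approach and is correct.

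For the ``infinitely many sign changes'' part, however, your plan has two genuine problems.

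\textbf{Forward dependency.} You invoke Theorem~\ref{p2}, which requires Condition~G. In the paper, Condition~G for Barabanov systems is Proposition~\ref{bd}, whose proof uses Proposition~\ref{pr1bis}. More seriously, to obtain a \emph{periodic adjoint} $\bar l(\cdot)$ you implicitly need Proposition~\ref{bmba}, which in turn rests on Corollary~\ref{hurwitz} and the bang-bang structure---both consequences of Proposition~\ref{pr1bis}. Even if one argues that these later results only use the first part of Proposition~\ref{pr1bis} (so that a careful reorganisation avoids formal circularity), Theorem~\ref{p2} by itself gives nothing about the periodicity of $l(\cdot)$, and your integration over a period of $\bar l$ is unjustified without that extra input.

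\textbf{The contradiction is not closed.} Granting a periodic pair $(\bar x,\bar l)$ with $\bar\phi_b\ge 0$, your identity yields only $c^TA^{-1}b<0$, which is perfectly compatible with $1+b^T(A^T)^{-1}c>0$ from Lemma~\ref{detB}. The ``symmetric'' identity for $\bar\phi_c$ carries no sign information since $\bar\phi_c$ changes sign. You acknowledge that the final step is ``the central difficulty'' but do not indicate how the Hurwitz and controllability hypotheses combine to produce an actual contradiction; as written, the argument is incomplete.

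The paper's route is direct and self-contained, avoiding any Poincar\'e--Bendixson machinery. Assume $\phi_b=b^Tl>0$ on $[T,\infty)$ and set $F(t)=\int_T^t\phi_b$. The key algebraic observation is
\[
b^T(A^T)^{-1}\dot l(s)=-\phi_b(s)\bigl(1+u(s)\,b^T(A^T)^{-1}c\bigr),
\]
so that, with $\alpha=\min\{1,\,1+b^T(A^T)^{-1}c\}>0$ from Lemma~\ref{detB}, one gets $\alpha\,\phi_b(s)\le -b^T(A^T)^{-1}\dot l(s)$ and hence $F(t)\le\alpha^{-1}b^T(A^T)^{-1}(l(T)-l(t))$, bounded since $l(\cdot)\subset S^0$. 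Barbalat's lemma then gives $\phi_b(t)\to 0$. Passing to a limit $l(n+\cdot)\to l_*(\cdot)$ one obtains $b^Tl_*\equiv 0$, whence $\dot l_*=-A^Tl_*$; since $A^T$ is Hurwitz, $\|l_*(t)\|\to\infty$, contradicting $l_*(\cdot)\subset S^0$. This argument uses only Lemma~\ref{detB} and the boundedness of $l(\cdot)$, and is what you should aim for.
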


\begin{proof}
The first part of the proposition is an immediate consequence of what precedes.

Regarding the second part, we only provide an argument for $b^Tl(\cdot)$ since the corresponding one for $c^T x(\cdot)$ is identical. Let $(x(\cdot),l(\cdot))$ be an extremal solution of System~\eqref{sa}. We assume by contradiction that the function $b^T l(\cdot)$ has a constant sign on $[T,+\infty)$ for some $T\geq0$. Without loss of generality, suppose that $b^T l(\cdot)$ is positive and let us prove that $b^T l(t)\to 0$ as $t\to+\infty$.

Define on $[T,+\infty)$ the increasing $C^1$ function $F(t):=\int_{T}^{t}{b^T l(s)ds}$ for $t\geq T$. We now claim that $\lim_{t\to+\infty}{F(t)}$ exists and is finite. Notice that we have for almost every $s$, 
\begin{align}
\label{ms}
b^T (A^T)^{-1}\dot{l}(s)=b^T (A^T)^{-1}(-A^Tl(s)-u(s)cb^Tl(s))=-b^T l(s)(1+u(s)b^T(A^T)^{-1}c).
\end{align}
By Eq.~\eqref{ms}, we have $-b^T (A^T)^{-1}\dot{l}(s)\geq\alpha b^Tl(s)$ where $\alpha:=\min\{1,1+b^T (A^T)^{-1}c\}>0$, thanks to Lemma~\ref{detB}. This implies that $$F(t)\leq -\frac{1}{\alpha}\int_{T}^{t}{b^T (A^T)^{-1}\dot{l}(s)ds}=\frac{1}{\alpha}{b^T(A^T)^{-1}(l(T)-l(t))}.$$ Hence $F(\cdot)$ is bounded since $l(\cdot)$ is uniformly bounded. This shows the claim. 

Notice that $F'(\cdot)$ is absolutely  continuous with bounded derivative. By Barbalat's lemma we conclude that $F'(t)\to 0$ as $t\to+\infty$, i.e., $b^T l(t)\to 0$ as $t\to+\infty$. 

Consider the sequence of solutions $l_n(\cdot):=l(n+\cdot)$. Then $l_n(\cdot)$ converges uniformly on every compact to some solution $l_*(\cdot)$ of System~\eqref{ad1}. Notice that $l_*(\cdot)$ is contained in $S^0$ since the sequence $(l_n(\cdot))_{n\geq0}$ is entirely contained in $S^0$. Thus, by the fact that $b^Tl(s)\to 0$ as $s\to+\infty$ we deduce that $b^T l_*(s)=0$ for every $s\geq0$. Therefore $\dot{l_*}(s)=-A^T l_*(s)$ for every $s\geq0$. Hence, since $A^T$ is Hurwitz we conclude that $\|l_*(s)\|\to +\infty$ as $s\to+\infty$  which contradicts the boundedness of $l_*(\cdot)$.

\end{proof}
As a consequence, there is no chattering phenomenom for the solutions of the linear differential inclusion defined by Eq.\eqref{sa}. Another interesting consequence of Proposition~\ref{pr1bis}  is the following generalization of 
Lemma~\ref{detB}.

\begin{corollary}\label{hurwitz}
The set $\mathcal{M}=conv\{A,A+bc^T\}$ associated with a Barabanov linear switched system is made of Hurwitz matrices.
\end{corollary}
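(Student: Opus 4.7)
The plan is a proof by contradiction: suppose that some matrix $M_{u_0}:=A+u_0 bc^T$ with $u_0\in[0,1]$ fails to be Hurwitz. Since by definition $A=M_0$ and $A+bc^T=M_1$ are Hurwitz, we must have $u_0\in(0,1)$. Because $\rho(\mathcal{M})=0$, no eigenvalue of $M_{u_0}$ can have positive real part: otherwise, using an associated eigenvector in the constant-control solution $e^{M_{u_0}t}v$ would give $\rho(\mathcal{M})>0$. Together with Lemma~\ref{detB}, which excludes $0$ as an eigenvalue, and the fact that complex eigenvalues of the real matrix $M_{u_0}$ come in conjugate pairs, the only remaining possibility is a pair of purely imaginary eigenvalues $\pm i\beta$ (with $\beta>0$) plus a third real eigenvalue.

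Next, I would extract a nontrivial periodic trajectory $x(t)=e^{M_{u_0}t}x_0$ of period $2\pi/\beta$ by choosing any non-zero $x_0$ in the two-dimensional $M_{u_0}$-invariant subspace associated with $\pm i\beta$. This $x(\cdot)$ is a trajectory of~\eqref{sys} corresponding to the constant control $u(\cdot)\equiv u_0$. Since $t\mapsto v(x(t))$ is both non-increasing along trajectories of~\eqref{sys} and $2\pi/\beta$-periodic, it must be constant, so $x(\cdot)$ is extremal. Theorem~\ref{th2} then furnishes an adjoint curve $l(\cdot)$ with $l(t)\in\partial v(x(t))$ such that $(x(\cdot),l(\cdot))$ solves~\eqref{sa}, so that Proposition~\ref{pr1bis} applies to this extremal pair.

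The contradiction will come from the PMP maximality condition. From the identities $\dot x=Ax+u(t)bc^Tx$ and $\dot x=M_{u_0}x$ one reads that $(u(t)-u_0)bc^Tx(t)=0$, so $u(t)=u_0$ at every time where $\phi_c(t)=c^Tx(t)\neq 0$. For this interior value $u_0\in(0,1)$ to maximize $u\mapsto u\phi(t)$ on $[0,1]$, one needs $\phi(t)=\phi_b(t)\phi_c(t)=0$, and hence $\phi_b(t)=0$ on the whole set $\{\phi_c\neq 0\}$. By Lemma~\ref{lem0} and its analog for $\phi_c$ coming from controllability of $(A^T,c)$, the zeros of $\phi_c$ are isolated, so $\{\phi_c\neq 0\}$ is open and dense in $\mathbb{R}_+$. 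Thus $\phi_b$ would vanish on a dense subset of $\mathbb{R}_+$, contradicting the isolated-zeros statement of Lemma~\ref{lem0} (equivalently, Proposition~\ref{pr1bis}).

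I expect the main obstacle to lie in the second paragraph: confirming that the periodic orbit produced by the purely imaginary eigenvalues of $M_{u_0}$ is genuinely extremal and can be coupled with a nonzero adjoint so that Proposition~\ref{pr1bis} is applicable. Once this is in place, the argument reduces to the simple observation that a sustained interior control value $u_0\in(0,1)$ would force $\phi$ to vanish on a dense subset of $\mathbb{R}_+$, which is incompatible with the isolated-zeros structure forced by the controllability of the pairs $(A,b)$ and $(A^T,c)$.
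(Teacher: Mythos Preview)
Your proof is correct and follows essentially the same approach as the paper: argue by contradiction to obtain some $u_0\in(0,1)$ for which $A+u_0bc^T$ has a nonzero purely imaginary eigenvalue, build the associated periodic (hence extremal) trajectory, invoke Theorem~\ref{th2} to attach an adjoint, and observe that the interior control value forces the switching function $\phi$ to vanish identically, contradicting Proposition~\ref{pr1bis}. Your write-up is in fact more detailed than the paper's (which simply appeals to a ``standard continuity argument'' for the imaginary eigenvalue and directly notes $\phi\equiv 0$), but the logical skeleton is identical.
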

\begin{proof}
The argument goes by contradiction. By a standard continuity argument, if the conclusion of the lemma does not hold true, then there exists $\bar{u}\in (0,1)$ such that $A_{\bar{u}}:=A+\bar{u}bc^T$
admits a purely imaginary eigenvalue, which is non zero since $A_{\bar{u}}$ is non-singular according to Lemma~\ref{detB}. Therefore, there exists $\bar{x}\in S$ such that the curve $\gamma:t\mapsto e^{tA_{\bar{u}}}\bar{x}$ defined for $t\geq 0$ is periodic. As a consequence $\gamma(\cdot)$ must be extremal since it is an admissible trajectory of System~\eqref{sys}. Moreover, there exists an adjoint trajectory $l(\cdot)$ such that $(\gamma(\cdot),l(\cdot))$ is solution of System~\eqref{sa}. Since the corresponding function $u(\cdot)$ remains constant and equal to $\bar{u}\in(0,1)$, the function $(c^T\gamma(\cdot))(b^Tl(\cdot))$ must be identically equal to zero, which is not possible by Proposition~\ref{pr1bis}. 

\end{proof} 

The following result allows us to apply Theorem~\ref{p2} to  Barabanov linear switched systems. 

\begin{proposition}
\label{bd}
The set $\mathcal{M}$ satisfies Condition $G$ introduced in Definition~\ref{condG}.
\end{proposition}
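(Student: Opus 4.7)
The plan is to show that any two solutions $(x_i(\cdot),l_i(\cdot))$ ($i=1,2$) of the coupled system~\eqref{sa} issuing from the same initial datum $(x_0,l_0)$ must coincide. I will use a standard sup argument: let $T := \sup\{t \geq 0 : (x_1,l_1) \equiv (x_2,l_2) \text{ on }[0,t]\}$ and suppose, for contradiction, that $T<+\infty$. By continuity, $(x_1(T), l_1(T)) = (x_2(T), l_2(T)) =: (\tilde x, \tilde l)$, and it suffices to prove local uniqueness on $[T, T+\varepsilon]$ for some $\varepsilon>0$, thereby contradicting the definition of $T$.

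As noted in the remark preceding~\eqref{sa}, wherever the switching function $\phi(t) = \phi_b(t)\phi_c(t) = (b^T l(t))(c^T x(t))$ is non-zero, the maximality condition forces $u(t) = (1+\mathrm{sgn}(\phi(t)))/2$; and once $u(\cdot)$ is fixed, the system~\eqref{sa} is linear in $(x,l)$ with bounded measurable coefficients, so Cauchy-Lipschitz yields uniqueness. It therefore suffices to show that the sign of $\phi$ on $(T, T+\varepsilon)$ is prescribed by $(\tilde x, \tilde l)$ alone. If $\phi_b(T)\ne 0$ and $\phi_c(T)\ne 0$ this is immediate by continuity. If $\phi_b(T) = 0$, Lemma~\ref{lem0} asserts that $\phi_b$ does not vanish on $(T, T+\varepsilon)$ and that its sign is determined by $\dot\phi_b(T) = -b^T A^T \tilde l$ or, when this quantity vanishes as well, by $\ddot\phi_b(T)$, whose non-nullity follows from the controllability of $(A,b)$. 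The key observation, extractable from the proof of Lemma~\ref{lem0}, is that these one-sided derivatives depend only on $\tilde l$: every $u$-dependent contribution in the Taylor expansion of $\phi_b$ at $T$ carries an explicit factor of $\phi_b$, which vanishes at the relevant order. The symmetric statement holds for $\phi_c$ via the controllability of $(A^T,c)$. Thus the sign of $\phi=\phi_b\phi_c$ on $(T, T+\varepsilon)$ is an invariant of $(\tilde x, \tilde l)$, so $u_1(t) = u_2(t)$ for a.e.\ $t\in (T, T+\varepsilon)$, and Cauchy-Lipschitz forces $(x_1, l_1) \equiv (x_2, l_2)$ on $[0, T+\varepsilon)$, contradicting the definition of~$T$.

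The main technical subtlety is the degenerate case $\phi_b(T)=\phi_c(T)=0$, where $\phi$ has a zero of order at least two and one might worry about interference between the (a priori $u$-dependent) lower-order behaviors of the two factors. The resolution is to treat each factor separately and invoke a small bootstrap: from $\phi_b(T) = 0$ one estimates $\phi_b$ on $(T, T+\varepsilon)$ by integrating twice the identity $\dot\phi_b = -b^T A^T l - u(b^T c)\phi_b$ and absorbing the $u$-dependent remainder involving $\int u(\cdot)\phi_b(\cdot)$ into the leading quadratic term via a Gronwall-style inequality. This shows each factor has a $u$-independent leading order determined by controllability invariants of $(\tilde x,\tilde l)$, and consequently the product $\phi$ inherits a $u$-independent sign, closing the argument.
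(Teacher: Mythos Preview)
Your argument is correct and follows essentially the same route as the paper: the paper's proof also reduces to showing that the sign of $\phi$ on a right neighborhood of a zero depends only on the value $(x(\bar t),l(\bar t))$, which is exactly the first assertion of Proposition~\ref{pr1bis} (itself a direct consequence of Lemma~\ref{lem0} and its $\phi_c$-analogue). Your explicit continuation argument and the separate treatment of the degenerate case $\phi_b(T)=\phi_c(T)=0$ are more detailed than the paper's terse invocation of Proposition~\ref{pr1bis}, but the latter already handles each factor independently of $u$, so no ``interference'' can occur and the Gronwall bootstrap you sketch is not actually needed.
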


\begin{proof}
Let $x_0, l_0$ be non zero vectors in $\mathbb{R}^3$ and assume that there exist  two solutions $(x_1(\cdot),l_1(\cdot))$ and $(x_2(\cdot),l_2(\cdot))$ of Systems~\eqref{sys}-\eqref{ad1} starting at $(x_0,l_0)$, associated with the switching laws $A+u_i(\cdot)bc^T$ and satisfying $\max_{u\in[0,1]}{u\phi_i(t)}=u_i(t)\phi_i(t)$ for $i=1,2$ and a.e.~$t$, where $\phi_i(t)=(b^T l_i(t))(c^T x_i(t))$.

We prove that $(x_1(\cdot),l_1(\cdot))=(x_2(\cdot),l_2(\cdot))$. Notice that it is enough to show the latter equality on some $(0,\epsilon)$ with $\epsilon>0$ sufficiently small.

If $(b^T l_0)(c^T x_0)=0$ then by Proposition~\ref{pr1bis} there exists a small $\epsilon>0$ such that $u_i(t)=\big(1+sgn(\phi_i(t))\big)/2$ on $(0,\epsilon)$ where $sgn(\phi_i(t))$ has a common value for $i=1,2$. This proves that $(x_1(\cdot),l_1(\cdot))=(x_2(\cdot),l_2(\cdot))$ on $(0,\epsilon)$.

Assume now that $0\neq(b^T l_0)(c^T x_0)=\phi_1(0)=\phi_2(0)$. Then $u_i(t)=\big(1+sgn(\phi_i(0))\big)/2$ on $(0,\epsilon)$ for some $\epsilon>0$ sufficiently small. Hence $(x_1(\cdot),l_1(\cdot))=(x_2(\cdot),l_2(\cdot))$ on $(0,\epsilon)$.

\end{proof}

We give below a technical result (Proposition~\ref{resolv}) which turns out to be instrumental for many subsequent results of the paper.
To proceed, we start with a preliminary lemma and the following convention.
Given a vector $z\in\mathbb{R}^3$ from now on we use $P_z$ to denote the plane perpendicular to $z$.
\begin{lemma}
\label{l-tec}
Let $M=e^{t_p A_p}\dots e^{t_1 A_1}$ with an alternating choice of the $A_i$'s in $\{A,A+bc^T\}$, $A_1\neq A_p$ and $t_i>0$ for $i=1,\dots,p$.  Assume that $1$ is a double eigenvalue of $M$. Then 
either $\ker ((M^{-1})^T-I_3)=P_b$ or  $\ker (M-I_3)=P_c$ and in the latter case $e^{t_1 A_1} P_c \neq P_c$. Moreover there exists $k\in \{1,\dots ,p\}$ such that $e^{-t_k A_k^T} P_b \neq P_b$.
\end{lemma}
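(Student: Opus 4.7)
The plan is to split the lemma into two parts of very different difficulty: the two assertions stating that $e^{t_1 A_1} P_c \neq P_c$ and that some $e^{-t_k A_k^T} P_b \neq P_b$ are essentially consequences of controllability, whereas the ``either/or'' dichotomy on $\ker(M-I_3)$ and $\ker((M^{-1})^T-I_3)$ is the real content. I would handle the easy parts first. By controllability of $(A,b)$, the vectors $b,Ab,A^2b$ are linearly independent, so $b$ is not an eigenvector of $A$; since $(A+bc^T)b=Ab+(c^Tb)b$, the vector $b$ is an eigenvector of $A+bc^T$ iff it is of $A$, hence it is of neither. Equivalently, neither $A^T$ nor $(A+bc^T)^T=A^T+cb^T$ preserves the plane $P_b=b^\perp$, so $e^{-tA_k^T}P_b\neq P_b$ for every $t>0$ and every $k$; in particular, the required $k$ exists. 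A symmetric argument using controllability of $(A^T,c)$ shows that $A$ and $A+bc^T$ both fail to preserve $P_c$, whence $e^{t_1A_1}P_c\neq P_c$.

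For the main claim, note that $\dim\ker(M-I_3)=\dim\ker(M^T-I_3)=\dim\ker((M^{-1})^T-I_3)$, and since $1$ is assumed to be a double eigenvalue this common dimension $d$ is either $2$ (the diagonalisable case) or $1$ (Jordan case). Since $P_b$ and $P_c$ are two-dimensional, the conclusion of the lemma can only hold if $d=2$, so the main strategy is (i) to rule out $d=1$ and (ii) in the diagonalisable case, to identify the eigenplane. For step (i), I would use the alternating structure together with $A_1\neq A_p$: a Jordan block at $1$ would impose a compatibility between the outgoing direction of the last factor $e^{t_pA_p}$ and the incoming direction of $e^{t_1A_1}$ of rank-two type, which the rank-one nature of the perturbation $A-(A+bc^T)=-bc^T$ cannot produce when the first and last factors differ.

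For step (ii), I would argue by contradiction: assume both $\ker(M-I_3)\neq P_c$ and $\ker((M^{-1})^T-I_3)\neq P_b$. Then there exist $v\in\ker(M-I_3)$ with $c^Tv\neq 0$ and $w\in\ker(M^T-I_3)$ with $b^Tw\neq 0$. Following the forward orbit $v_k:=e^{t_kA_k}\cdots e^{t_1A_1}v$ (with $v_p=v_0=v$) and the backward adjoint orbit $w_k:=e^{t_{k+1}A_{k+1}^T}\cdots e^{t_pA_p^T}w$ (with $w_0=w_p=w$), the condition $c^Tv_{k}\neq 0$ and $b^Tw_{k}\neq 0$ persists for a range of indices by continuity. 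One then uses the telescoping
\[
M-I_3=\sum_{k=1}^p e^{t_pA_p}\cdots e^{t_{k+1}A_{k+1}}\bigl(e^{t_kA_k}-e^{t_kA_k'}\bigr)e^{t_{k-1}A_{k-1}}\cdots e^{t_1A_1}+\bigl(e^{t_pA_p'}\cdots e^{t_1A_1'}-I_3\bigr),
\]
together with a Duhamel expansion of each difference $e^{t_kA_k}-e^{t_kA_k'}$, to exhibit the image of $M-I_3$ as a sum of terms each of which lies in a subspace ``controlled by $b$'' while its adjoint kernel is controlled by $c$. The double-eigenvalue hypothesis forces strong cancellations in this sum, and these cancellations cannot both fix a vector off $P_c$ and an adjoint vector off $P_b$.

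The main obstacle, which I expect to be the hard part, is step (ii): exploiting the rank-one structure $bc^T$ of the perturbation between consecutive alternating factors to show that, in the diagonalisable case, the eigenplane must align with $P_c$ or its dual with $P_b$. This requires a careful bookkeeping of which directions can be reached by composing Duhamel corrections along the product, and should make essential use of the alternation $A_{k+1}\neq A_k$ and the endpoint condition $A_1\neq A_p$.
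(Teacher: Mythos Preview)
Your argument for the ``easy'' assertions contains a genuine error. From the fact that $A_k$ does not preserve $P_c$ (equivalently, that $c$ is not an eigenvector of $A_k^T$) it does \emph{not} follow that $e^{t_1 A_1} P_c \neq P_c$: the exponential $e^{tA}$ can have strictly more invariant $2$-planes than $A$ for specific values of $t$. For instance, if $A$ has a complex conjugate pair $\alpha\pm i\beta$ and $t\beta\in\pi\mathbb Z$, the $2$-plane on which $A$ acts as a spiral becomes a genuine eigenspace of $e^{tA}$, and then every plane containing the remaining real eigendirection is invariant for $e^{tA}$. The same objection applies verbatim to your claim that $e^{-t_kA_k^T}P_b\neq P_b$ for every $t>0$. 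The paper does not argue infinitesimally: for $e^{t_1A_1}P_c\neq P_c$ it uses that, in the case $\ker(M-I_3)=P_c$, every point of $P_c\cap S$ starts a periodic extremal trajectory, so $e^{t_1A_1}(P_c\cap S)\subset S$; if in addition $e^{t_1A_1}P_c=P_c$, iteration would give $e^{kt_1A_1}(P_c\cap S)\subset P_c\cap S$ for all $k$, contradicting $A_1$ Hurwitz. The ``Moreover'' is handled by a determinant argument on the restrictions to $P_b$.

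Your plan for the core dichotomy also misses the mechanism actually used. The Jordan case $d=1$ is ruled out not by the alternating structure and $A_1\neq A_p$, but by the standing Barabanov hypothesis $\rho(\mathcal M)=0$: the iterates $M^k$ are time-$kT$ maps of a system whose trajectories are bounded by the Barabanov norm, hence $\sup_k\|M^k\|<\infty$, which forces the Jordan block at $1$ to be trivial. For the identification of the eigenplane, the paper's argument is geometric and uses the extremal/adjoint structure: every $x\in\ker(M-I_3)\cap S$ generates a periodic extremal, whose periodic adjoint $l$ lies in $\ker((M^{-1})^T-I_3)\cap S^0$; since $t=0$ is a switching time one has $(b^Tl)(c^Tx)=0$, so if $\ker(M-I_3)\neq P_c$ then for $x$ with $c^Tx\neq 0$ the adjoint satisfies $b^Tl=0$, and a short connectedness/central-symmetry argument on $S^0$ shows these adjoints cannot all be collinear, forcing $\ker((M^{-1})^T-I_3)=P_b$. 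Your telescoping/Duhamel outline is a different route, but the crucial ``strong cancellations'' step is not supplied; without the Barabanov-sphere and switching-condition input, the rank-one structure alone does not obviously pin down the eigenplane.
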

\begin{proof}
Since $\sup_{k\geq 0} \|M^k\|$ is finite, the Jordan blocks corresponding to the eigenvalue $1$ must be trivial. Therefore, both $\ker ((M^{-1})^T-I_3)$ and $\ker (M-I_3)$ are two-dimensional subspaces of $\R^3$. Notice next  that for every $x\in \ker (M-I_3)\cap S$ there exists a periodic trajectory
starting at $x$ and all such trajectories have the same switching law. In particular $t=0$ is a common switching time. Moreover each such periodic trajectory is a solution of \eqref{sa} with initial condition $(x,l)$ for some adjoint vector $l\in \ker ((M^{-1})^T-I_3)$.
Assume that $\ker (M-I_3) \neq P_c$. Then $\ker (M-I_3) \cap P_c = \R x_0$ and for every $x\in\ker (M-I_3)$ non collinear with $x_0$ one has $c^Tx\neq 0$. Therefore for every $x$ non collinear with $x_0$, there exists an adjoint vector $l$ associated with $x$ such that $b^Tl=0$.
If there exists $x$ non collinear with $x_0$ with two non collinear adjoint vectors as above, one gets at once that $\ker ((M^{-1})^T-I_3)=P_b$.
Otherwise one can define a map $x\mapsto l(x)$ such that $l(x)$ is the adjoint vector associated with $x$ such that $b^T l(x)=0$. It remains to show that there exist $x_1,x_2\in\ker (M-I_3)$ non collinear with $x_0$ such that the corresponding adjoint vectors  $l(x_1),l(x_2)$ are themselves non collinear. If it were not the case then all the considered $l(x)$ would be collinear with some fixed $l_*\in S^0$. Since  $S^0$ is the unit sphere of the norm $v^*$ and every $l(x)$  belongs to $S^0$, one would get that $l(x)=\pm l_*$. One would deduce that for every $x\in \ker(R(T)-I_3)\cap S$ and not collinear with $x^0$, one has that $l_*^Tx=\pm 1$ and, by continuity $l_*^Tx_0=\pm 1$. Since $\ker(R(T)-I_3)\cap S$ is connected and the closed sets $\{x\,:\, l_*^Tx=  1\}$ and $\{x\,:\, l_*^Tx=  -1\}$ are disjoint,  $\ker(R(T)-I_3)\cap S$ must be contained inside one of them, contradicting the central symmetry of $\ker(R(T)-I_3)\cap S$.

Assume now that $e^{t_1 A_1}P_c=P_c$ then $e^{k t_1 A_1}x$ belongs to $P_c\cap S$ for every $k\geq 0$ and $x\in P_c\cap S$, contradicting the fact that $A_1$ is Hurwitz.
 
 It remains to prove the last statement of the lemma. If it is not the case, then for every $k\in \{1,\dots ,p\}$ one has $e^{-t_k A_k^T} P_b = P_b$, that is $e^{-t_k A_k^T}$ restricts to an endomorphism $B_k$ on $P_b$ with $\det(B_k)>1$. 
 Moreover  $B_p\dots B_1$ is equal to the restriction of $(M^T)^{-1}$ on $P_b$ which is by assumption the identity on $P_b$. Hence a contradiction. 

\end{proof}

\begin{proposition}
\label{resolv}
Let $x^0(\cdot)$ be a periodic solution of System~\eqref{sys} with period $T>0$ and associated with some switching law $A^0(\cdot)$. Let $R(\cdot)$ the {fundamental matrix} associated with $A^0(\cdot)$. Then the eigenvalue $1$ of $R(T)$ is simple and the two other eigenvalues have modulus strictly less than $1$.
\end{proposition}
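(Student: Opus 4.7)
The plan combines a $v$-contractivity argument with a determinant computation, reducing the claim to a degenerate case that I would then rule out using Lemma~\ref{l-tec}.

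Since $R(T)x^0(0)=x^0(0)$, the value $1$ is in the spectrum of $R(T)$. The $k$-fold concatenation of the switching law $A^0(\cdot)$ is admissible, so monotonicity of $v$ yields $v(R(T)^ky)\le v(y)$ for every $y\in\mathbb{R}^3$ and every $k\in\mathbb{N}$; the resulting boundedness of $\{R(T)^k\}_{k\ge 0}$ forces every eigenvalue $\lambda$ of $R(T)$ to satisfy $|\lambda|\le 1$, with trivial Jordan blocks for those of modulus one. By Corollary~\ref{hurwitz} and compactness of $\mathcal{M}$, the trace of any $A\in\mathcal{M}$ is bounded above by a strictly negative constant, so $\det R(T)=\exp\int_0^T\mathrm{tr}\,A^0(s)\,ds\in(0,1)$.

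Writing the spectrum of $R(T)$ as $\{1,\lambda_2,\lambda_3\}$, we obtain $\lambda_2\lambda_3=\det R(T)\in(0,1)$. If $\lambda_2,\lambda_3$ are complex conjugate, then $|\lambda_2|^2=\lambda_2\lambda_3<1$ and we are done. Otherwise they are real, of the same (positive) sign, of modulus $\le 1$ and with product strictly less than one, so the only obstruction to the statement is $\lambda_i\in\{-1,+1\}$ for some $i\in\{2,3\}$; in both cases $1$ is a double eigenvalue of $M\in\{R(T),R(2T)\}$. By Proposition~\ref{pr1bis}, $x^0(\cdot)$ is bang-bang, so after merging identical consecutive arcs the matrix $M$ writes as $e^{t_pA_p}\cdots e^{t_1A_1}$ with alternating $A_i\in\{A,A+bc^T\}$ and $A_1\ne A_p$ (alternation is preserved when passing from $R(T)$ to $R(2T)$ precisely because $A_1\ne A_p$). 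Lemma~\ref{l-tec} then gives either $\ker(M-I_3)=P_c$, in which case $e^{t_1A_1}P_c\ne P_c$, or $\ker((M^{-1})^T-I_3)=P_b$, in which case there exists $k$ with $e^{-t_kA_k^T}P_b\ne P_b$.

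The main obstacle is converting these structural conclusions into a genuine contradiction. The strategy is that every $y\in\ker(M-I_3)\cap S$ produces, via monotonicity of $v$, a periodic extremal trajectory $R(\cdot)y$ sharing the switching times of $A^0(\cdot)$; extremality forces its switching function $\phi^y=\phi_b^y\phi_c^y$ to vanish at each of those times. Lemma~\ref{l-tec} dictates, at each switching time, which factor of $\phi^y$ produces the zero, and propagating this information along the primal and adjoint flows imposes a rigid affine constraint on $\nabla v(y)$ over the $1$-dimensional curve $\ker(M-I_3)\cap S$. Combined with $\nabla v(y)^Ty=v(y)=1$ and with the controllability of $(A,b)$ and $(A^T,c)$, this overdetermines the map $y\mapsto\nabla v(y)$ and yields the sought contradiction. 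The bookkeeping is most delicate in the doubled-loop case $\lambda_i=-1$, where the constraints have to be tracked through $2p$ switching times.
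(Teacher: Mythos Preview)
Your reduction to the degenerate case is correct and arguably cleaner than the paper's. The observation that $\det R(T)=\exp\int_0^T \mathrm{tr}\,A^0(s)\,ds\in(0,1)$ (using that every matrix in $\mathcal{M}$ is Hurwitz by Corollary~\ref{hurwitz}) forces $\lambda_2\lambda_3\in(0,1)$, so the only obstruction is $\lambda_i\in\{\pm 1\}$, leading after possibly squaring to a matrix $M$ with a double eigenvalue $1$. The paper arrives at the same point without the determinant remark, simply noting that failure means either $1$ is double for $R(T)$ or both $\pm 1$ occur.

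However, your proposal stops being a proof precisely where the real work begins. After invoking Lemma~\ref{l-tec} you only sketch a strategy: track which factor of $\phi^y$ vanishes at each switching time, propagate, and hope this ``overdetermines'' $y\mapsto\nabla v(y)$. Several things are missing. First, Lemma~\ref{l-tec} controls only the initial switching time (e.g.\ $\ker(M-I_3)=P_c$ gives $c^Ty=0$ at $t=0$, and $e^{t_1A_1}P_c\ne P_c$ says something at $t=t_1$), not all of them. Second, you presuppose differentiability of $v$ along $P_c\cap S$ in speaking of $\nabla v(y)$, which is not known. Third, even granting these, you never write down the ``rigid affine constraint'' or explain why the system is inconsistent; the phrase ``overdetermines'' is not an argument. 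The doubled-loop case you flag as ``most delicate'' is left entirely untouched.

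The paper's route to the contradiction is geometric and quite different from yours. Using the last clause of Lemma~\ref{l-tec}, a cyclic shift of the initial switching time reduces to the case $\ker(M-I_3)=P_c$ with $e^{t_1A_1}P_c\ne P_c$. One then singles out the pair $\pm x_*\in P_c\cap S$ where $c^TA_1x=0$, splitting $P_c\cap S\setminus\{\pm x_*\}$ into arcs $U^\pm$ by the sign of $c^TA_1x$. An implicit-function argument shows that for $y\in U^-$ near $x_*$ the $A_1$-arc $e^{tA_1}y$ (the first bang arc of the periodic trajectory through $y$, hence extremal) returns to $P_c$ at a point $x(y)\in U^+$ after a short time $t(y)\to 0$. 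On the other hand, every point of $P_c\cap S$, in particular $x(y)$, is the endpoint of an extremal $A_p$-arc (the last bang arc of its own periodic trajectory). One then obtains an open set of points of $S$ near $x_*$ reached by two distinct extremal arcs governed by different matrices; at such points $c^Tx\ne 0$, so $A_1x\ne A_px$ and the two forward continuations are genuinely distinct extremals, contradicting the forward-uniqueness part of Proposition~\ref{p4} (Condition~G holds by Proposition~\ref{bd}). This geometric mechanism is the missing core of your argument.
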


\begin{proof}
We argue by contradiction. Then either $1$ is a double eigenvalue of $R(T)$ or both $1$ and $-1$ are eigenvalues of $R(T)$. In both cases there exists a finite concatenation $M=e^{t_p A_p}\dots e^{t_1 A_1}$ with $A_i\in \{A,A+bc^T\}$ for $i=1,\dots,p$, $A_1\neq A_p$ and $t_i>0$ such that $1$ is a double eigenvalue. We apply Lemma~\ref{l-tec} and, using the last part of it, one can always assume that $\ker (M-I_3)=P_c$ and $e^{t_1 A_1} P_c \neq P_c$, up to  changing  the initial switching time. 
We first remark that $P_c\cap P_{A_1^T c}\cap S$ is made of two antipodal points $\pm x_*$ since the pair $(A^T,c)$ is controllable. Then we choose $x_*$ such that $c^TA_1^2 x_* > 0$. Observe that $P_c\cap S\setminus \{x_*,-x_*\}$ is the disjoint union of two open subsets $U^+,U^-$ in $P_c\cap S$ such that  $c^TA_1 x > 0$ on $U^+$ and $c^TA_1 x < 0$ on $U^-$. Consider the trajectories $t\mapsto e^{t A_1} x$ where $x\in U^-$ is close enough to $x_*$ in such a way that $c^TA_1^2 x > 0$.

We claim that there exists a small enough open neighborhood $V$ of $x_*$ such that for every $y\in V^-:=V\cap U^-$ there exists $t(y)>0$ and $x(y)\in U^+$ such that $x(y) = e^{t(y) A_1} y$ and $t(\cdot)$ is continuous with $\lim_{y\to x_1} t(y)=0$. The claim is an immediate consequence of the implicit function theorem applied at the point $(0,x_*)$ to the function 
$F:\R\times P_c\to \R$ defined by
\begin{figure}
\begin{center}
\input{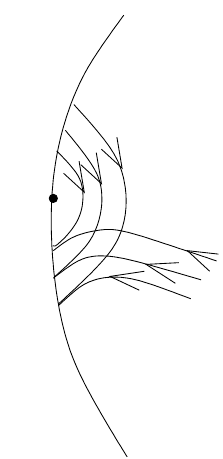_t}
\caption{Proof of Proposition~\ref{resolv}}\label{preuve0}
\end{center}
\end{figure}
$$F(t,x)=\left\{\begin{array}{ll} c^T\frac{e^{t A_1}-Id}{t}x & t\neq 0,\\
c^T A_1 x & t=0,
 \end{array}\right.$$
 and the fact that $e^{tA_1}y$ stays on $S$ for $t\in[0,t_1]$. 
By definition of $M$ we deduce that  $e^{-t_p A_p}(P_c\cap S)\subset S$ and then every point $x(y)$ with   $y\in V^-$ is reached by an extremal trajectory  $e^{t_p A_p}z$ for some $z\in S$. Since $A_p\neq A_1$ every point $x(y)$ with   $y\in V^-$ is reached by two extremal trajectories corresponding to $A$ and $A+bc^T$ (see Figure~\ref{preuve0}). This also holds for points of the type $e^{-t A_1} x(y)$ with $t\in (0,t(y))$ small enough and $y\in V^-$. We have then reached a contradiction with Proposition~\ref{p4}.

\end{proof}

\begin{proposition}\label{bmba}
Every extremal solution $(x(\cdot),l(\cdot))$ of System~\eqref{sa} converges to a periodic solution $(\bar{x}(\cdot),\bar l(\cdot))$ of System~\eqref{sa}, where $\bar{x}(\cdot)$ and $\bar l(\cdot)$ have the same minimal period $T$ and the corresponding switching law is of minimal period $T/2$ or $T$.
\end{proposition}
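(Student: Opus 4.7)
The plan is to combine the Poincar\'e--Bendixson theorem (Theorem~\ref{p2}) with the spectral information on the monodromy matrix from Proposition~\ref{resolv} and with the isolated character of the zeros of the switching function from Proposition~\ref{pr1bis}. Since $\mathcal{M}$ is made of non-singular matrices by Lemma~\ref{detB} and satisfies Condition~G by Proposition~\ref{bd}, Theorem~\ref{p2} applies and $x(\cdot)$ converges to a periodic trajectory $\bar x(\cdot)$, whose minimal period I denote by $T>0$. Let $\bar A(\cdot)$ be the switching law along $\bar x$ and $\tilde R(\cdot)$ its fundamental matrix. Note that $\bar A$ is $T$-periodic, since $(\bar u(\cdot+T)-\bar u(\cdot))(c^T\bar x(\cdot))\,b=0$ and $c^T\bar x$ has isolated zeros by Proposition~\ref{pr1bis}. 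Proposition~\ref{resolv} then yields that $\tilde R(T)$ has $1$ as a simple eigenvalue with the other two eigenvalues of modulus strictly less than $1$.

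The second step is to produce a periodic adjoint limit. Pick $t_n\to+\infty$ along which $(x(t_n),l(t_n))\to(\bar x_0,\bar l_0)$; after a time shift, we may assume $\bar x_0=\bar x(0)$. By Condition~G the forward solution of~\eqref{sa} through $(\bar x_0,\bar l_0)$ is uniquely defined, its $x$-component coincides with $\bar x(\cdot)$, and I call $\bar l(\cdot)$ its $l$-component, which solves $\dot{\bar l}=-\bar A^T\bar l$ and remains in the bounded set $S^0$ by Remark~\ref{adj11}. Since $\bar l(nT)=(\tilde R(T)^{-T})^n\bar l_0$ and the eigenvalues of $\tilde R(T)^{-T}$ are $1$ (simple) and two others of modulus strictly greater than $1$, boundedness forces $\bar l_0$ into the one-dimensional eigenspace for the eigenvalue~$1$, whence $\bar l(T)=\bar l_0$ and $\bar l$ is $T$-periodic. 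To upgrade from subsequential to genuine convergence, I would observe that the $\omega$-limit set of $(x(\cdot),l(\cdot))$ in $\R^3\times S^0$ is non-empty, compact, connected and positively invariant; by the previous argument each of its points lies on a $T$-periodic orbit of~\eqref{sa}; by forward uniqueness from Condition~G, two such orbits are either equal or disjoint; connectedness then forces the $\omega$-limit to reduce to a single periodic orbit, which is the desired $(\bar x(\cdot),\bar l(\cdot))$.

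For the period analysis, let $T_u$ be the minimal period of $\bar A$ and write $T=kT_u$ with integer $k\geq 1$. Applying Proposition~\ref{resolv} to $\tilde R(T)=\tilde R(T_u)^k$, the eigenvalues $\lambda_1,\lambda_2,\lambda_3$ of $\tilde R(T_u)$ satisfy $\lambda_1^k=1$, $|\lambda_2|,|\lambda_3|<1$, and $\lambda_1$ is simple. Since $\bar x(0)$ is a real eigenvector of $\tilde R(T_u)$ for $\lambda_1$, reality forces $\lambda_1\in\R$, and $|\lambda_1|=1$ yields $\lambda_1\in\{+1,-1\}$: the case $\lambda_1=+1$ gives $\bar x(T_u)=\bar x(0)$ and hence $T_u=T$ by minimality; the case $\lambda_1=-1$ gives $\bar x(T_u)=-\bar x(0)\neq\bar x(0)$ and $\bar x(2T_u)=\bar x(0)$, forcing $T=2T_u$, i.e.\ $T_u=T/2$. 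In the case $k=1$ the adjoint monodromy $\tilde R(T)^{-T}$ has $1$ as simple eigenvalue so $\bar l$ has minimal period $T$ by the same type of argument; in the case $k=2$ the eigenvalues of $\tilde R(T_u)^{-T}$ are $-1$ and two of modulus strictly greater than $1$, so $+1$ is \emph{not} an eigenvalue, and the same boundedness reasoning forces $\bar l_0$ into the $(-1)$-eigenspace, so that $\bar l(T_u)=-\bar l_0\neq\bar l_0$ and $\bar l(2T_u)=\bar l_0$, giving again minimal period $T$ for $\bar l$.

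The main technical hurdle I anticipate is the upgrade from subsequential to genuine convergence of $(x(\cdot),l(\cdot))$: it rests on the joint use of connectedness of the $\omega$-limit set in the product space $\R^3\times S^0$ and of forward uniqueness from Condition~G, without which the $\omega$-limit could a priori consist of a disjoint union of several distinct $T$-periodic orbits rather than a single one.
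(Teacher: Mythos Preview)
Your proof is correct and follows essentially the same route as the paper's: Poincar\'e--Bendixson for $x(\cdot)$, periodicity of the switching law from the isolated zeros of $c^T\bar x$, the spectral dichotomy of Proposition~\ref{resolv}, and a boundedness argument to force the adjoint limit into the one-dimensional $1$-eigenspace of $(\tilde R(T)^T)^{-1}$.

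Two minor differences are worth noting. For the passage from subsequential to full convergence of $l(\cdot)$, you invoke connectedness of the $\omega$-limit set of the pair $(x,l)$ together with Condition~G, whereas the paper simply observes that any limit point $\bar l_0$ at a given $\bar x(0)$ lies in a one-dimensional eigenspace and hence is determined up to sign---the subdifferential condition $\bar l_0\in\partial v(\bar x(0))$ then fixes the sign. Both arguments are valid; yours is slightly more abstract, while the paper's is more direct. Similarly, in the half-period case $T_u=T/2$, you rule out $\bar l(T_u)=\bar l_0$ by noting that $+1$ is not an eigenvalue of $(\tilde R(T_u)^T)^{-1}$, whereas the paper rules it out by the contradiction $\bar l_0\in\partial v(x_0)\cap\partial v(-x_0)$, which would force $\bar l_0^Tx_0=1=-1$. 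One small point you gloss over (as does the paper, to be fair) is why the $x$-component of the limit trajectory through $(\bar x(0),\bar l_0)$ must equal $\bar x(\cdot)$: Condition~G alone gives uniqueness only through the pair $(x_0,l_0)$, not through $x_0$; the missing observation is that any extremal trajectory lying entirely on the Jordan curve $\{\bar x(s):s\in[0,T]\}$ and starting at $\bar x(0)$ must coincide with $\bar x(\cdot)$, since at almost every point of the curve the tangent direction singles out a unique value of $u$.
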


\begin{proof}
Let $(x(\cdot),l(\cdot))$ be defined as above and associated with a switching law $A(\cdot)$.  By Theorem~\ref{p2}, $x(\cdot)$ tends to a periodic solution $\bar x(\cdot)$ of System~\eqref{sys} with minimal period $T>0$ and associated with some switching law $\bar A(\cdot)$ such that $A(\cdot)\rightharpoonup \bar A(\cdot)$ in the weak-$*$ topology. 
One first deduces that $ \bar A(\cdot)$ is periodic of period $T$ since $\dot {\bar x}(\cdot)$ and the zeros of $c^T\bar x(\cdot)$ are isolated.
We next claim that the minimal period of $ \bar A(\cdot)$ is $T/2$ or $T$. To see that, first recall that the set of positive periods of $ \bar A(\cdot)$
is a subgroup $G$ of $(\mathbb{R},+)$ and not reduced to zero since $T\in G$. Therefore either $G=\mathbb{R}$ or it is generated by some positive $\bar T$ which is the minimal period of $ \bar A(\cdot)$ and $T=p\bar T$ with $p$ positive integer.
In the first case, $ \bar A(\cdot)$ must be constant and, according to Corollary~\ref{hurwitz}, this contradicts the fact that $\bar x(\cdot)$ is periodic. In the second case, one has $R(T)=R(\bar T)^p$ where $R(\cdot)$ is the fundamental matrix associated with $\bar A(\cdot)$. 
Let $\omega$ be an eigenvalue of $R(\bar T)$ of modulus equal to one. 
According to Proposition~\ref{resolv}, $1$ is a simple eigenvalue of $R(T)$. Then $\omega$ must be real, equal to $1$ or $-1$. Recalling now that $T$ is the minimal period of $\bar x(\cdot)$, we get at once that $\bar T=T$ or $T/2$.

The trajectory $l(\cdot)$ belongs to $S^0$ which is compact. Therefore its $\omega-$limit set is made of solutions of System~\eqref{ad1} and associated with $\bar A(\cdot)$. Proving  the proposition amounts to show that this  $\omega$-limit set reduces to a unique periodic solution  $\bar l(\cdot)$ of System~\eqref{ad1} with minimal period $T$. 

Setting $x_0=\bar x(0),l_0=\bar l(0)$ and $S(\cdot)=\big(R(\cdot)^T\big)^{-1}$ we have that $R(T)x_0=x_0$ and that $1$ is a simple eigenvalue of $S(T)$, while the other eigenvalues have modulus strictly larger than $1$. 
Let $l_1$ be an eigenvector of $S(T)$ associated with $1$. Then $l_0=\alpha l_1+w$ where $w\in \ker(S(T)-I_3)^\perp$ and $S(nT)l_0=S(nT)\alpha l_1+S(nT)w=\alpha l_1+S(T)^n w$ for every integer $n$. Since the sequence  $(S(nT) l_0)_{n\geq 1}$ is uniformly bounded we deduce that $w=0$ and $\bar l(\cdot)$ is periodic of period $T$. If  $\bar T=T$, we are finished. Otherwise we have that 
$S(\bar T)l_0$ is either equal to $l_0$ or $-l_0$. In the first case, $l_0$ belongs to both $\partial v(x_0)$ and $\partial v(-x_0)$, which is a contradiction. The proof of the proposition is complete.

\end{proof}

\begin{remark}\label{rem1} 
Assume that there exists a $T$-periodic trajectory 
$x(\cdot)$ with support $\Gamma$ such that $\Gamma\cap (-\Gamma)\neq\emptyset$. Then, necessarily $\Gamma= -\Gamma$, $x(\cdot)$ is $\frac{T}2$-antiperiodic and such a trajectory must be unique. 
\end{remark}

In the following, we use $w(\cdot)$ to denote a Barabanov norm (i.e., a norm on $\mathbb{R}^3$ satisfying Conditions~$1$ and $2$ of Theorem~\ref{th1}) for the following linear switched system 
\begin{align}
\label{snr}
\dot{m}(t)=A^T m(t)+u(t)cb^T m(t),
\end{align}
where $u(\cdot)$ is a measurable function taking values on $[0,1]$.

\begin{lemma}
\label{int}
The trajectory $\bar{l}(\cdot)$ defined in Proposition~\ref{bmba} cannot intersect itself.
\end{lemma}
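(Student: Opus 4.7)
\emph{Plan.} Argue by contradiction: assume $\bar l(t_1)=\bar l(t_2)=:l_\ast$ for some $0\le t_1<t_2<T$. The strategy is to view $\bar l(\cdot)$, traversed backward in time, as an extremal trajectory of the ``dual'' Barabanov linear switched system associated with~\eqref{snr}, so that the machinery already developed (Condition~G and Proposition~\ref{p4}) becomes available at~$l_\ast$ via a suitable time-reversal.

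The key preliminary observation is that~\eqref{snr} is itself a Barabanov linear switched system in the sense of Definition~\ref{Bar-LSS}: its matrix set is $\mathrm{conv}\{A^T,A^T+cb^T\}$, whose two extreme points are Hurwitz with the same spectra as $A$ and $A+bc^T$; the pairs $(A^T,c)$ and $(A,b)$ are controllable by hypothesis; and $\rho(\mathcal M^T)=\rho(\mathcal M)=0$ since transposition preserves operator norms of matrix products. All the earlier results therefore apply to~\eqref{snr} with its own Barabanov norm $w(\cdot)$, in particular Proposition~\ref{bd} (Condition~G) and Proposition~\ref{p4}. A direct computation shows that $s\mapsto m(s):=\bar l(-s)$ is a trajectory of~\eqref{snr}, so $s\mapsto w(\bar l(-s))$ is non-increasing; being also $T$-periodic, it is constant, which means (after rescaling) that $\bar l(\cdot)$ reversed is a $w$-extremal trajectory of~\eqref{snr}.

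Now consider the two forward trajectories of~\eqref{snr} defined by $m_i(s):=\bar l(t_i-s)$, $i=1,2$. Both start at $l_\ast$ at $s=0$ and, by $T$-periodicity of $\bar l$, both return to $l_\ast$ at $s=T$. Two cases are possible. If $m_1\equiv m_2$ on a right neighbourhood of $s=0$, then standard ODE uniqueness (coupled with the piecewise-analytic structure of $\bar l$ granted by Proposition~\ref{pr1bis}, which ensures isolated switching times) propagates the identity to all $s\ge 0$; unwinding gives $\bar l(u)=\bar l(u+(t_2-t_1))$ on a right neighbourhood of any $u\le t_1$, and $T$-periodicity then forces $\bar l$ to admit the global period $t_2-t_1<T$, contradicting the minimality of $T$ from Proposition~\ref{bmba}. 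Otherwise, for $\varepsilon>0$ small enough the arcs $m_1|_{(T-\varepsilon,T)}$ and $m_2|_{(T-\varepsilon,T)}$ are genuinely distinct (again thanks to Proposition~\ref{pr1bis}, analytic branches cannot agree on arbitrarily small intervals without coinciding), so that $m_1$ and $m_2$ ``intersect each other'' at $l_\ast$ at the common time $s=T$ in the sense of the definition preceding Proposition~\ref{p4}. Proposition~\ref{p4} in the dual setting then yields forward uniqueness at $l_\ast$, giving $m_1\equiv m_2$ on $[T,\infty)$, which upon unwinding and invoking $T$-periodicity again produces the period $t_2-t_1<T$, the same contradiction.

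\emph{Main obstacle.} The delicate point is the second alternative above: one must rigorously exclude the possibility that $m_1$ and $m_2$ agree on a sequence of subintervals accumulating at $T$ without being identical, so as to legitimately apply Proposition~\ref{p4}. The piecewise real-analytic character of $\bar l(\cdot)$ between switching times, together with the isolated-zero property of the switching function from Proposition~\ref{pr1bis}, is what rules this out; the rest of the argument is then a combination of duality and $T$-periodicity.
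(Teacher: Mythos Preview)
Your approach is exactly the paper's: reverse time so that $\bar l$ becomes a $w$-extremal of the dual Barabanov system, then invoke Proposition~\ref{p4}. The paper's own argument is a two-line sketch and does not spell out the case analysis you attempt.

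Your Case~1 argument, however, does not go through. When you say that ``standard ODE uniqueness \dots\ propagates the identity to all $s\ge 0$'', you are implicitly treating $m_1$ and $m_2$ as solutions of the \emph{same} ODE. They are not: $m_i(s)=\bar l(t_i-s)$ carries the switching law $s\mapsto\bar u(t_i-s)$, and these are two different time-shifts of $\bar u(\cdot)$. The switching of $\bar l$ is governed by the function $\phi(t)=(b^T\bar l(t))(c^T\bar x(t))$, which depends on $\bar x$ as well as on $\bar l$; the equality $\bar l(t_1-s)=\bar l(t_2-s)$ on a small interval tells you nothing about $\bar x(t_1-s)$ versus $\bar x(t_2-s)$, so the switching times of $m_1$ and $m_2$ need not line up. Once they disagree at some $s_\ast$, the two curves are analytic solutions of \emph{different} linear equations beyond $s_\ast$ and will generically separate, so the piecewise-analytic structure does not salvage the propagation.

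The fix is to avoid Case~1 by choosing the self-intersection point more carefully: since $T$ is the minimal period of $\bar l$, the closed set $E=\{u:\bar l(u)=\bar l(u+\tau)\}$ with $\tau=t_2-t_1$ is proper, and at a boundary point $u^\ast$ of a maximal interval of $E$ the outgoing $\bar l$-arcs (equivalently, the incoming $\bar m$-arcs) separate immediately, putting you in the situation where Proposition~\ref{p4} applies. Note also that the hypothesis of Proposition~\ref{p4} requires the incoming arcs to be \emph{disjoint} on $[T-\varepsilon,T)\times[T-\varepsilon,T)$, not merely non-identical; your Case~2 phrase ``genuinely distinct'' glosses over this, and the same boundary-of-$E$ trick, together with local injectivity of $\bar m$ (its velocity never vanishes), is what makes this precise.
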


\begin{proof}
Consider the trajectory $\bar{m}(\cdot)$ of System~\eqref{snr} defined as $\bar{m}(t)=\bar{l}(T-t)$ for every $t\in[0,T]$. Then $w(\bar{m}(t))=w(\bar{m}(0))$ for every $t\in[0,T]$ since $w(\cdot)$ is a Barabanov norm for System~\eqref{snr} and $\bar{l}(\cdot)$ is $T$-periodic. Since $\rho(\mathcal{M}^T)=\rho(\mathcal{M})=0$ and by using $w(\cdot)$ instead of $v(\cdot)$ in Proposition~\ref{p4}, we deduce that $\bar{l}(\cdot)$ cannot intersect itself. 

\end{proof}

For $t\geq0$, we use ${\dot{x}}_{+}(t)$, ${\dot{x}}_{-}(t)$, respectively to denote the right and the left derivative of $x(\cdot)$ at $t\geq0$ respectively. Similarly, we use ${\dot{l}}_{+}(t)$ and ${\dot{l}}_{-}(t)$ respectively to denote the right and the left derivative of $l(\cdot)$ at $t\geq0$ respectively. The latter are well-defined since $(x(\cdot),l(\cdot))$ is a bang-bang trajectory. In that context, the definition of tranverse section to trajectories $x(\cdot)$ 
as given in Definition~\ref{def-TS} is equivalent to the following one, which also extends to the component $l(\cdot)$:
a transverse section $\Sigma$ in $S$ ($S^0$ respectively) for System~\eqref{sa}  is a connected subset of the intersection of $S$ ($S^0$ respectively) with a plane $P$ such that for every trajectory $(x(\cdot), l(\cdot))$ of System~\eqref{sa} and $t>0$ satisfying $x(t)\in\Sigma$ ($l(t)\in\Sigma$ respectively) we have $q^T {\dot{x}}_{+}(t)>0$ and $q^T {\dot{x}}_{-}(t)>0$ ($v^T {\dot{l}}_{+}(t)>0$ and $v^T {\dot{l}}_{-}(t)>0$ respectively) where $q$ ($v$ respectively) is a given vector orthogonal to $P$.

We next provide the counterpart of Lemma~\ref{3} when replacing $x(\cdot)$ by $l(\cdot)$. 
\begin{lemma}
\label{ct}
Let $(x(\cdot),l(\cdot))$ be an extremal solution of System~\eqref{sa} and $\Sigma$ a transverse section in $S^0$ for System~\eqref{sa}. If $l(\cdot)$ does not intersect itself then
$\omega(l(\cdot))$ intersects $\Sigma$ at no more than one point.
\end{lemma}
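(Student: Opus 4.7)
The plan is to adapt the classical Poincaré--Bendixson monotonicity argument already invoked in Lemma~\ref{3} (for $x(\cdot)$ on $S$) to the adjoint trajectory $l(\cdot)$ on $S^0$. Recall from Remark~\ref{adj11} that $l(\cdot)$ takes values in $S^0$, and $S^0$ is the unit sphere of the norm $v^*$ on $\R^3$, hence homeomorphic to the standard $2$-sphere. This is the ambient setting in which a Jordan curve argument will be run.

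First I would establish the analogue of the second item of Lemma~\ref{3}: if $(t_n)_{n\geq 0}$ is a strictly increasing sequence of times such that $l(t_n)\in\Sigma$ for every $n$, then the points $l(t_n)$ are placed monotonically along $\Sigma$. The proof runs as follows. Fix consecutive crossing times $t_{n-1}<t_n$ with $l(t_{n-1})\neq l(t_n)$. Since by assumption $l(\cdot)$ does not intersect itself, the arc $\{l(t):t\in[t_{n-1},t_n]\}$ together with the sub-arc of $\Sigma$ joining $l(t_{n-1})$ to $l(t_n)$ forms a simple closed curve $J$ on $S^0$. By the Jordan curve theorem (applied on the topological sphere $S^0$), $S^0\setminus J$ has exactly two connected components. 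The transversality condition $v^T\dot l_\pm(t)>0$ at any crossing forces all crossings of $\Sigma$ to happen in the same direction; hence at time $t_n$ the trajectory enters one fixed component, and it can leave that component only by crossing either the trajectory arc (forbidden by non-self-intersection) or the sub-arc $[l(t_{n-1}),l(t_n)]_\Sigma$ in the opposite direction (forbidden by transversality). Consequently, for all $t>t_n$ the trajectory stays in that component and in particular $l(t_{n+1})$ lies on the sub-arc of $\Sigma$ on the correct side of $l(t_n)$, yielding monotonicity.

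Once monotonicity is in place, the conclusion is immediate. Assume by contradiction that $\omega(l(\cdot))\cap\Sigma$ contains two distinct points $\bar l_1$ and $\bar l_2$. Then for $i=1,2$ there exist sequences $(s_n^i)$ with $s_n^i\to+\infty$ and $l(s_n^i)\to\bar l_i$; after possibly replacing these times by nearby crossing times (using transversality and continuity of $l(\cdot)$), one can assume $l(s_n^i)\in\Sigma$ for every $n$. Merging these two sequences into a single increasing sequence of crossings of $\Sigma$ would produce a sequence $(l(t_n))$ on $\Sigma$ with two distinct accumulation points in $\Sigma$, contradicting the monotone ordering established in the previous paragraph.

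The main obstacle is the Jordan-curve step: one must handle the fact that $S^0$ is only a Lipschitz $2$-sphere and that $l(\cdot)$ is only one-sidedly differentiable at the isolated switching times provided by Proposition~\ref{pr1bis}. Neither issue is serious: the Jordan curve theorem holds on any topological $2$-sphere, and the transversality condition has been formulated in Definition~\ref{def-TS} directly in terms of one-sided derivatives, so crossings of $\Sigma$ remain isolated and unidirectional even at switching times. Hence the classical planar reasoning transfers verbatim to our setting.
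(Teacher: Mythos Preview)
Your proposal is correct and follows precisely the classical Poincar\'e--Bendixson monotonicity argument via the Jordan curve theorem that the paper has in mind. The paper does not actually supply a proof of Lemma~\ref{ct}: it is stated as the direct counterpart of Lemma~\ref{3}, which is itself left unproved as ``standard'' with a reference to \cite{Filippov}, so your argument is exactly the intended one.
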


We now provide a crucial property regarding the switching function $\phi(\cdot)$.

\begin{proposition}
\label{gn}
Under the assumptions and notations of Proposition~\ref{bmba}, the functions $c^T \bar{x}(\cdot)$ and $b^T \bar{l}(\cdot)$ change sign exactly twice on each minimal period. 
\end{proposition}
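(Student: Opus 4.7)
My plan is to establish the statement separately for $c^T\bar x$ and $b^T\bar l$, showing in each case that the number of sign changes per minimal period is both at least and at most two.

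For the lower bound, observe that $c^T\bar x(\cdot)$ is continuous and $T$-periodic, so its number of sign changes on $[0,T)$ is even. By Proposition~\ref{pr1bis} this function changes sign infinitely many times on $\mathbb{R}_+$, hence changes sign at least once per period, and therefore at least twice. The same reasoning applies to $b^T\bar l(\cdot)$.

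The heart of the proof is the upper bound for $c^T\bar x$, which I would obtain by showing that the closed curve $\gamma:=P_c\cap S$ is, away from at most two antipodal bad points, a transverse section for the flow on which a periodic simple trajectory can land at most once. The key geometric facts are these. At any $p\in P_c\cap S$, for every $u\in[0,1]$ one has $(A+ubc^T)p=Ap$ since $c^Tp=0$; hence the velocity at $p$ is well defined and equal to $Ap$, with normal component $c^TAp$. The set of $p\in P_c$ with $c^TAp=0$ is a line by controllability of $(A^T,c)$ (since $c$, $A^Tc$, $(A^T)^2c$ are linearly independent), so it meets $S$ at exactly two antipodal points $\pm p_*$. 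Thus $\gamma\setminus\{\pm p_*\}$ consists of two open arcs $\gamma^+,\gamma^-$ on which $c^TAp$ has constant, opposite signs (antipodal symmetry gives $c^TA(-p)=-c^TAp$), so each arc is a transverse section through which the flow crosses in a single direction.

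I now invoke the monotonicity of Lemma~\ref{3} together with Proposition~\ref{p4} and Proposition~\ref{bd}, which ensure that $\bar x$ restricted to $[0,T)$ is injective (otherwise uniqueness of extremals at a self-intersection would force a period strictly smaller than $T$). If $\bar x$ met $\gamma^+$ at two distinct times $0\leq t_1<t_2<T$, Lemma~\ref{3} would place $\bar x(t_1)$ and $\bar x(t_2)$ in a strict monotone order along $\gamma^+$; periodicity would then force a third intersection $\bar x(t_1+T)=\bar x(t_1)$ to lie beyond $\bar x(t_2)$, a contradiction. The same holds on $\gamma^-$, so $\bar x$ transversally crosses $\gamma$ at most twice per period. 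It remains to rule out sign changes at $\pm p_*$: if $\bar x(t_0)=p_*$ then $c^T\bar x(t_0)=0$ and $\frac{d}{dt}(c^T\bar x)(t_0)=c^TAp_*=0$, while $\frac{d^2}{dt^2}(c^T\bar x)(t_0)=c^TA^2p_*\neq0$ (by the same controllability, which forces $p_*\notin\ker c^T\cap\ker c^TA\cap\ker c^TA^2=\{0\}$). Hence $\bar x$ only touches $P_c$ tangentially at $\pm p_*$, without sign change, exactly as in case $(b)$ of Lemma~\ref{lem0}. Combined with the lower bound, $c^T\bar x$ changes sign exactly twice per period.

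For $b^T\bar l$, the same strategy applies in the adjoint setting: the time reversal $s\mapsto\bar l(T-s)$ is a periodic extremal of the Barabanov linear switched system associated with $\mathcal{M}^T=conv\{A^T,A^T+cb^T\}$, in which the roles of $b$ and $c$ are exchanged. At any $l\in P_b$ the adjoint velocity equals $-A^Tl$, independently of $u$, so $P_b\cap S^0$ is transverse to the adjoint flow except at two antipodal points where, by controllability of $(A,b)$, the touches are tangential. Combining Lemma~\ref{int} (which gives that $\bar l$ does not self-intersect) with the analog of Lemma~\ref{3} for the adjoint system yields at most two transverse crossings of $P_b$ per period, and the lower bound again forces equality. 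The main obstacle in this plan is the careful justification that each open arc of $\gamma\setminus\{\pm p_*\}$ (and the corresponding object in the adjoint sphere) genuinely behaves as a transverse section in the sense required by Lemma~\ref{3}, together with a clean treatment of the degenerate tangential points; the rest reduces to bookkeeping via earlier results.
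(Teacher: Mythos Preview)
Your proposal is correct and follows essentially the same approach as the paper. The paper argues first for $b^T\bar l$ on $S^0$ (defining the two transverse arcs $\Sigma_\pm=P_b\cap\{l\in S^0:\pm b^TA^Tl>0\}$ and invoking Lemmas~\ref{int} and~\ref{ct}), then states that the case of $c^T\bar x$ follows by the same techniques; you do the primal case first using Lemma~\ref{3} together with injectivity of $\bar x$ on a minimal period (via Propositions~\ref{p4} and~\ref{bd}), and then dualize via time reversal for $b^T\bar l$---but the core idea (splitting the level curve into two transverse half-arcs, each crossed at most once per period by a non-self-intersecting periodic trajectory, while tangential points at $\pm p_*$ carry no sign change by the second-derivative argument of Lemma~\ref{lem0}) is identical.
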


\begin{proof}
We only deal with the function $b^T  \bar{l}(\cdot)$. By the same arguments the conclusion can also be obtained for the function $c^T \bar{x}(\cdot)$. 

Notice first by virtue of Lemma~\ref{lem0} that the function $b^T\bar{l}(\cdot)$ changes sign at some time $\bar t>0$ if and only if $b^T \bar{l}(\bar t)=0$ and $b^T\dot{\bar{l}}(\bar t)\neq0$. By Proposition~\ref{pr1bis} the function $b^T\bar{l}(\cdot)$ changes sign infinitely many times. Then with no loss of generality, we assume that $b^T \bar{l}(0)=0$ and $b^T\dot{\bar{l}}(0)>0$ which  implies that $b^T \bar{l}(t)>0$ for $t$ sufficiently small.  Define now the set
$
\Sigma_-:=P_b\cap\{l\in S^0:b^T A^T l<0\}.
$
We claim that $\Sigma_-$ is a transverse section passing through $\bar{l}(0)$. Indeed observe first that $\bar{l}(0)\in\Sigma_-$ since $b^T\dot{\bar{l}}(0)=-b^T A^T \bar{l}(0)$.  
On the other hand any trajectory $\tilde{l}(\cdot)$ such that $\tilde{l}(\sigma)\in\Sigma_-$ for some $\sigma\geq0$ satisfies $b^T \tilde{l}(\sigma)=0$ and $b^T\dot{\tilde{l}}(\sigma)=-b^T A^T \tilde{l}(\sigma)>0$, which proves the claim. By applying Lemmas~\ref{int} and~\ref{ct}, we conclude that $\{\bar{l}(t):t\in[0,T]\}\cap\Sigma_-=\{\bar{l}(0)\}$.
Also by Proposition~\ref{pr1bis} there exists $t^*\in(0,T)$ such that $b^T \bar{l}(t^*)=0$ and $b^T\dot{\bar{l}}(t^*)<0$. Otherwise the function $b^T \bar{l}(\cdot)$  does not change sign. Thus, we have that $b^T A^T \bar{l}(t^*)>0$. 
By defining the set $\Sigma_+:=P_b\cap\{l\in S^0:b^T A^T l>0\}$ 
and by using the same techniques as what precedes, one can show that $\Sigma_+$ is a transverse section passing through $\bar{l}(t^*)$. Thus, we have that $\{\bar{l}(t):t\in[0,T]\}\cap\Sigma_+=\{\bar{l}(t^*)\}$. Hence, the function $b^T \bar{l}(\cdot)$ changes sign only twice on each minimal period. 

By the same techniques, one can prove that the function $c^T \bar x(\cdot)$ also changes sign exactly twice on each minimal period. 

\end{proof}
We deduce from the previous results the following.
\begin{corollary}\label{cor1}
 {Every periodic trajectory $(x(\cdot),l(\cdot))$ of \eqref{sa} of minimal period $T>0$ is formed either by two bang arcs or by four bang arcs. The first case happens whenever $b^Tl(\cdot)$ and $c^Tx(\cdot)$ have common zeroes.} 
\end{corollary}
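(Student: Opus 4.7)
The plan is to read off the bang arcs of $(x(\cdot),l(\cdot))$ from the sign changes of the switching function $\phi(t):=\phi_b(t)\phi_c(t)$, where $\phi_b(t)=b^T l(t)$ and $\phi_c(t)=c^T x(t)$. The maximality condition in System~\eqref{sa} gives $u(t)=(1+sgn(\phi(t)))/2$ wherever $\phi(t)\neq 0$, so the switching times of $(x,l)$ on a minimal period $[0,T)$ are precisely the points where $\phi$ changes sign, and counting bang arcs reduces to counting sign changes of $\phi$.

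The first step is to observe that Proposition~\ref{gn}, formulated there for the asymptotic trajectory of Proposition~\ref{bmba}, applies to any periodic extremal trajectory of minimal period $T$. Its proof only uses the sign-change criterion of Lemma~\ref{lem0}, the existence of infinitely many sign changes of $\phi_b$ (Proposition~\ref{pr1bis}), and the non self-intersection of $l(\cdot)$ on one period; this last ingredient is supplied by the time-reversal argument of Lemma~\ref{int} applied to $l(T-\cdot)$, viewed as a periodic extremal trajectory of the adjoint system~\eqref{snr}. Consequently each of $\phi_b$ and $\phi_c$ has exactly two sign changes on $[0,T)$. Call the corresponding zero sets $Z_b=\{s_1^b,s_2^b\}$ and $Z_c=\{s_1^c,s_2^c\}$.

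I would then conclude by a case analysis on $k:=|Z_b\cap Z_c|\in\{0,1,2\}$. If $k=0$, the four distinct zeros of $\phi$ each come from a single factor changing sign, so $\phi$ changes sign at each, yielding four bang arcs. If $k=1$, relabel so that $s_1^b=s_1^c=:t^\ast$; both factors switch sign simultaneously at $t^\ast$, and by Lemma~\ref{lem0} the point $t^\ast$ is an isolated zero of $\phi$ at which the sign of $\phi$ is preserved, while at the two remaining zeros $s_2^b$ and $s_2^c$ only one factor vanishes and so $\phi$ does change sign. This produces the two-bang-arc case of the statement, precisely when common zeros exist. If $k=2$, then $\phi$ has constant sign on $[0,T)$, $u(\cdot)$ is constant, and $(x,l)$ would be a non-trivial periodic orbit of a single matrix in $\mathcal{M}$; this is impossible since every such matrix is Hurwitz by Corollary~\ref{hurwitz}. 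The only mildly delicate step in this plan is certifying the extension of Proposition~\ref{gn} from limit trajectories to arbitrary periodic ones, which amounts to verifying that no property specific to $\omega$-limits is used in its proof.
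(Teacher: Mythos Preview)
Your argument is correct and is precisely the deduction the paper has in mind when it writes ``We deduce from the previous results the following'': count the sign changes of $\phi=\phi_b\phi_c$ using Proposition~\ref{gn}, then run the case analysis on $|Z_b\cap Z_c|$, ruling out $k=2$ via Corollary~\ref{hurwitz}. Your one concern is unfounded: Proposition~\ref{gn} applies to an arbitrary periodic extremal solution of~\eqref{sa} because, by Proposition~\ref{bmba} applied to $(x(\cdot),l(\cdot))$ itself, any such solution is its own limit $(\bar x(\cdot),\bar l(\cdot))$.
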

We close this section with a technical result which will be repeatedly used in the final part of the paper.
\begin{lemma}\label{lem!!}
 Let $(x_*,-x_*)$ be the unique pair of antipodal points on $S$ such that $c^Tx_*=c^TAx_*=0$. Then, there does not exist a periodic trajectory of System \eqref{sys} passing through $x_*$ or $-x_*$.
\end{lemma}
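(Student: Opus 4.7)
I would argue by contradiction. Assume $\gamma:\mathbb{R}_+\to\mathbb{R}^3$ is a periodic trajectory of System~\eqref{sys} with $\gamma(0)=x_*$ and minimal period $T>0$; the case $\gamma(0)=-x_*$ follows by central symmetry. Since $v$ is non-increasing along any trajectory of \eqref{sys} and $v\circ\gamma$ is periodic, $v\circ\gamma$ is constant, so $\gamma$ is extremal. After rescaling, I may assume $\gamma\subset S$, and there exists an adjoint $l(\cdot)$ such that $(\gamma,l)$ is a periodic solution of \eqref{sa}.

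The first key step is a local computation at $t=0$. Because $c^Tx_*=0$, every $u\in[0,1]$ yields the same velocity $(A+ubc^T)x_*=Ax_*$, so $\dot\gamma(0)=Ax_*$ regardless of $u(0)$. Using additionally $c^TAx_*=0$, one obtains $(A+ubc^T)^2x_*=A^2x_*$. Thus on any bang arc containing $t=0$ with $u\equiv u_0$ constant,
\[
c^T\gamma(t)\;=\;\tfrac{t^2}{2}\,c^TA^2x_*+O(t^3).
\]
Moreover $c^TA^2x_*\neq 0$: otherwise $x_*$ would be orthogonal to $c$, $A^Tc$ and $(A^T)^2c$, contradicting controllability of $(A^T,c)$. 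Hence $c^T\gamma$ has a non-degenerate double zero at $t=0$, and in particular does not change sign there.

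Next, I would show that $b^Tl(0)\neq 0$. If not, then $t=0$ is a common zero of $b^Tl$ and $c^Tx$, so by Corollary~\ref{cor1} $\gamma$ is a two-arc trajectory with exactly two switching times $\tau_1,\tau_2\in(0,T)$. At each $\tau_i$ the switching function $\phi=(b^Tl)(c^Tx)$ must change sign, which forces the vanishing orders of $b^Tl$ and $c^Tx$ at $\tau_i$ to sum to an odd integer. Combining this with Proposition~\ref{gn} (each of $c^Tx$ and $b^Tl$ changes sign exactly twice per minimal period) and the order-two touch of $c^Tx$ at $t=0$, a case analysis on the orders of $(b^Tl,c^Tx)$ at $\{0,\tau_1,\tau_2\}$ yields in every case a sign change of $\phi$ outside $\{\tau_1,\tau_2\}$, hence an extra switching time, contradicting the two-arc count. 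Therefore $b^Tl(0)\neq 0$, the control $u$ is constant in a neighborhood of $t=0$, $\gamma$ is real-analytic there, and $\gamma$ falls in the four-arc case (no common zeros of $b^Tl$ and $c^Tx$).

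To conclude in the four-arc case, I would exploit the tangency of $\gamma$ with $P_c\cap S$ at $x_*$: since $Ax_*\in P_c\cap T_{x_*}S$, the curve $\gamma$ is tangent at $x_*$ to $P_c\cap S$, and $c^T\gamma$ keeps constant sign $\mathrm{sgn}(c^TA^2x_*)$ on both sides of $t=0$. The plan is to reproduce the implicit-function construction from the proof of Proposition~\ref{resolv}: apply the implicit function theorem at $(0,x_*)$ to
\[
F(t,y):=\begin{cases}c^T\frac{e^{tA_{u_0}}-I}{t}y,&t\neq 0,\\ c^TA_{u_0}y,&t=0,\end{cases}
\]
producing a small family of points $y$ close to $x_*$ with $c^Ty>0$ such that $c^T(e^{tA_{u_0}}y)=0$ for some small $t>0$; combining this with the analogous construction for the opposite bang dynamics on the other side of $P_c$, and using Proposition~\ref{p4} together with Condition~$G$ (Proposition~\ref{bd}), one would obtain a second extremal trajectory reaching $x_*$ from a direction transverse to $Ax_*$, contradicting the inclusion $\mathcal{V}_{x_*}\subset\mathcal{M}x_*=\{Ax_*\}$ established in the proof of Proposition~\ref{mar}. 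I expect this last step to be the main obstacle: because $Ax_*$ is itself tangent to $P_c\cap S$, the naive transversality of the Proposition~\ref{resolv} argument breaks down, and one must use the second-order non-degeneracy $c^TA^2x_*\neq 0$ together with Corollary~\ref{c2} applied to a cycle close to $\gamma$ in order to place the auxiliary extremal in the correct connected component of $S\setminus\gamma$ and force it to reach $x_*$.
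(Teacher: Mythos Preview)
Your approach diverges substantially from the paper's and contains a genuine gap that you yourself flag but do not close.

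The paper's argument is purely topological and never touches the adjoint variable or the two/four-arc dichotomy. One fixes $x_*$ with $c^TA^2x_*>0$, takes $t_0$ with $x(t_0)=x_*$ and $c^Tx(t)>0$ on $(0,t_0)$ (the other zero of $c^Tx$), and forms the Jordan curve on $S$ made of the arc $\{x(t):t\in[0,t_0]\}$ together with the arc $\xi$ in $P_c\cap S$ joining $x(0)$ to $x_*$ and avoiding $-x_*$. One then checks that $c^TAx\geq 0$ along $\xi$ and distinguishes whether $Ax_*$ points into $\xi$ or not. In the first case the region enclosed on the $x_*$-side becomes forward invariant for \eqref{sys}, so $c^Tx(\cdot)$ could never change sign again, contradicting Proposition~\ref{pr1bis}; in the second case a direct computation along the parameterization $\eta(s)=(x_*+Ax_*(s-t_0))/v(\cdots)$ forces $c^TA\eta(s)<0$ on $\xi$, contradicting the inequality just obtained. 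No Corollary~\ref{cor1}, no Proposition~\ref{gn}, no Proposition~\ref{p4}.

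Your route, by contrast, commits to the adjoint structure and then runs into the obstacle you identify: since $c^Tx_*=0$, one has $\mathcal{M}x_*=\{Ax_*\}$, so \emph{every} extremal reaching $x_*$ has left-derivative $Ax_*$. The contradiction you aim for --- a second extremal arriving from a direction transverse to $Ax_*$ --- is therefore impossible in principle, not merely hard to arrange; the tangency of $Ax_*$ to $P_c\cap S$ is exactly the manifestation of this. The implicit-function construction borrowed from Proposition~\ref{resolv} cannot produce the transversality you need, and the vague appeal to Corollary~\ref{c2} and second-order information does not repair this. Separately, the case analysis you sketch to rule out $b^Tl(0)=0$ is plausible but not carried out; given that the final step fails anyway, the economy of the paper's Jordan-curve argument is decisive here.
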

\begin{figure}
\begin{center}
\input{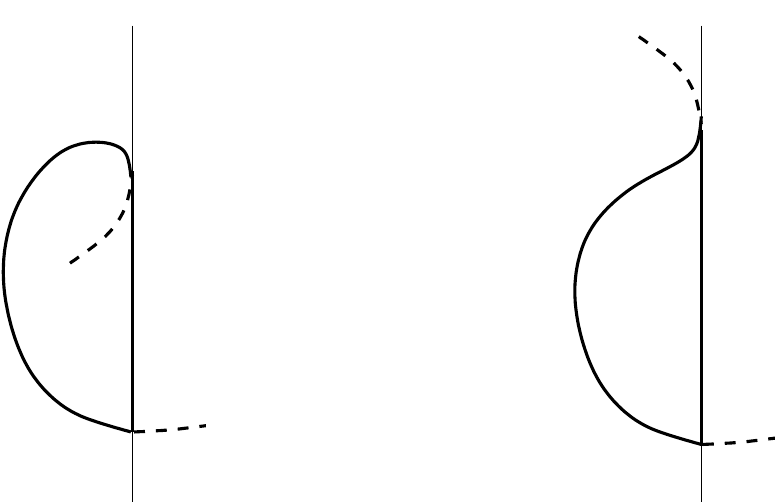_t}
\caption{Proof of Lemma~\ref{lem!!}}
\label{fig_lemma}
\end{center}
\end{figure}

\begin{proof} Consider  $x_*$ so that $c^Tx_*=c^TAx_*=0$ and $c^TA^2x_*>0$. The argument of the lemma goes by contradiction. Let then $x(\cdot)$ be a periodic trajectory of System \eqref{sys} with $x(t_0)=x_*$ for some $t_0$ (the case  $x(t_0)=-x_*$ is treated similarly). With no loss of generality, we assume that $c^Tx(0)=0$ and $c^Tx(t)>0$ for  $t\in (0,t_0)$. Indeed, $c^T\dot x(t_0)=c^TAx_*=0$ and $c^Tx(\cdot)$ is twice differentiable at $t=t_0$ with second derivative equal to $c^TA^2x_*$. 

 Consider the closed curve $\gamma$ on $S$ defined as the union of the trajectory $x(t)$, $t\in [0,t_0]$ and the arc $\xi\subset S\cap P_c$ connecting $x(0)$ and $x(t_0)$ and not containing $-x_*$. Note that $\gamma$ is a Jordan curve on $S$ dividing $S\setminus\gamma$ into two open and disjoint subsets $S_*^{\pm}$, with $S_*^-$ containing $-x_*$. 
Moreover, $c^TAx\geq 0$ for $x$ in the arc $\xi$ since $c^TAx(0)=c^T\dot x(0)>0$,  $c^TAx_*=0$ and $-x_*$ does not belong to $\xi$. 
In addition any regular parameterisation of $S\cap P_c$ in a neighborhood of $x_*$ results into a Lipschitz curve differentiable at $x_*$ with a tangent vector at $x_*$ which is collinear with $Ax_*$ (see Figure~\ref{fig_lemma}). 
Two possibilities may occur: either $Ax_*$ points in the direction of $\xi$ or not. If it does, the piece of trajectory $x(t)$ for $t>t_0$ close to $t_0$ is contained in $S_*^+$. Then, $S_*^{+}$ must be a positive time invariant set for the dynamics defined by \eqref{sys} since $c^TAx\geq 0$ for $x$ on the arc $\xi$. However, the invariance of $S_*^{+}$  implies that $c^Tx(t)$ keeps a constant sign for every positive $t$, which contradicts Proposition~\ref{pr1bis}. 

Assume now that $Ax_*$ does not point in the direction of $\xi$. Consider the regular parameterization $\eta(\cdot)$ of $S\cap P_c$ in a neighborhood of $x_*$ given by
$$
\eta(s)=\frac{x_*+Ax_*(s-t_0)}{v(x_*+Ax_*(s-t_0))},
$$
for $s$ close to $t_0$. According to the assumption on $Ax_*$, the arc $\xi$ (in a neighborhood of $x_*$) corresponds to values $\eta(s)$ for $s$ smaller than $t_0$, and by a direct evaluation, $c^TA\eta(s)<0$ there, which contradicts the fact that  $c^TAx\geq 0$ for $x$ in the arc $\xi$.

\end{proof}

\begin{remark}\label{remL} The conclusion of the above lemma holds with $S$, $c,x_*$ and System~\eqref{sys} replaced by $S^0$, $b,l_*$
and System~\eqref{ad1}, where $(l_*,-l_*)$ is the  the unique pair of antipodal points on $S^0$ such that $l_*^Tb=l_*^TAb=0$. 
\end{remark}

\subsection{Proofs of Theorem~\ref{main-th} {and Proposition~\ref{omega0}}}
We start this section by proving Theorem~\ref{main-th}. The argument proceeds by considering the alternative of having or not an infinite number of distinct periodic trajectories on the unit Barabanov sphere. 
 
Note that Corollary \ref{cor1} provides an algebraic criterion for an extremal trajectory of System~\eqref{sys} to be periodic. 
Indeed, given such a trajectory $\gamma(\cdot)$  there exist $(t^*_1, t^*_2, t^*_3, t^*_4)\in\R_+^4$ and $x^*\in P_c\cap S$ such that $\gamma(0)=x^*$, the $t_i^*$'s are the time durations between the switchings  (the first switching time being at time $t=0$) and $\gamma(\cdot)$ is periodic of period $\sum_{i=1}^4 t^*_i$.
Then $x^*$ is an eigenvector associated with the simple eigenvalue $1$ for the matrix 
$$
M(t^*_1, t^*_2, t^*_3, t^*_4)= e^{t^*_4A_2}e^{t^*_3A_1}e^{t^*_2A_2}e^{t^*_1A_1}.
 $$
We can reformulate the above by introducing the functions $M:\R^4\rightarrow \mathbb{R}^{3\times 3}$ and $f:\R^4\rightarrow \R$  defined by
\beq\label{eq-f-0}
M(t_1,t_2,t_3,t_4)=e^{t_4A_2}e^{t_3A_1}e^{t_2A_2}e^{t_1A_1},\quad      f(t_1,t_2,t_3,t_4)=\det(M(t_1,t_2,t_3,t_4)-I_3).
\eeq
It is trivial to see that $M$ and $f$ are real analytic functions and we use $Z_f$ to denote the zero set of $f$. Setting $z^*:=(t^*_1, t^*_2, t^*_3, t^*_4)$, one gets that $z^*\in Z_f$. 

 We next provide the following lemma.  
 
 \begin{lemma}\label{lem:accu}
Assume there exists  an infinite sequence $(\gamma_n)_{n\geq 0}$ of distinct periodic extremal trajectories. Then there exists  a non trivial  interval $I\supset [0,s_*]$ and a non constant continuous  injective curve admitting a piecewise analytic parameterization  $z:I\rightarrow \R^4$
such that, 
\begin{description}
\item[$(i)$] for every $s\in I$, $f(z(s))=0$;
\item[$(ii)$] for every $s\in I\cap [0,s_*]$, $z(s)\in (\R_+)^4$; 
\item[$(iii)$] for $s\in \{0,s_*\}$, there exists $1\leq i\leq 4$ such that $t_i(\cdot)$ changes sign at $s$ while  $t_j(s)>0$ for some $j\neq i$.
 \end{description}
\end{lemma}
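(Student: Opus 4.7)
The plan is to encode each periodic $4$-bang extremal trajectory as a zero of the real analytic function $f$ from \eqref{eq-f-0} lying in the positive orthant $(\mathbb{R}_+^*)^4$, then apply tools from real analytic geometry (the curve selection lemma, and if necessary desingularization) to extract a piecewise analytic curve of such zeros through an accumulation point.

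\textbf{Step 1 (Compactness and reduction).} After passing to a subsequence and possibly relabeling $A$ and $A+bc^T$, by Corollary~\ref{cor1} one may assume every $\gamma_n$ has four bang arcs (the case in which infinitely many $\gamma_n$ have two bang arcs produces an accumulation point on a coordinate hyperplane $\{t_i=t_j=0\}$, and the remainder of the argument below is analogous). Each $\gamma_n$ then gives rise to a tuple $z_n=(t_1^n,\dots,t_4^n)\in(\mathbb{R}_+^*)^4$ with $f(z_n)=0$, since $1$ is an eigenvalue of $M(z_n)$. By Corollary~\ref{hurwitz} every matrix in $\mathcal{M}$ is Hurwitz; therefore along any bang arc $v(e^{sA_i}x)$ decays to $0$ as $s\to\infty$, and since $v$ must remain equal to $1$ along the extremal trajectory, each $t_i^n$ is bounded by a uniform constant. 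Extracting a convergent subsequence yields $z_n\to z^*\in Z_f\cap\overline{(\mathbb{R}_+)^4}$, with distinct $z_n$'s accumulating at $z^*$.

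\textbf{Step 2 (Curve selection and maximal extension).} The set $Z_f\cap(\mathbb{R}_+^*)^4$ is semianalytic and $z^*$ lies in its closure. The curve selection lemma of \L ojasiewicz--Milnor (combined with resolution of singularities of $Z_f$ where needed) provides a real analytic arc $\phi:[0,\varepsilon)\to\mathbb{R}^4$ with $\phi(0)=z^*$ and $\phi((0,\varepsilon))\subset Z_f\cap(\mathbb{R}_+^*)^4$. One then extends $\phi$ by analytic continuation along $Z_f$, gluing different local analytic branches at singular points of $Z_f$; the resulting map is piecewise analytic. The boundedness from Step 1 prevents the continuation from running to infinity, so it must terminate at a second point $z^{**}\in\partial(\mathbb{R}_+)^4\cap Z_f$ where at least one coordinate vanishes while another remains positive. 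Re-parameterizing, one obtains a continuous injective piecewise analytic curve $z:[0,s_*]\to\overline{(\mathbb{R}_+)^4}\cap Z_f$ with $z(0)=z^*$, $z(s_*)=z^{**}$, and $z((0,s_*))\subset(\mathbb{R}_+^*)^4$. Should the continuation instead close up to a loop entirely contained in $(\mathbb{R}_+^*)^4\cap Z_f$, the associated $S^1$-family of $4$-bang periodic trajectories would foliate an open annulus on $S$; this can be excluded by using Corollary~\ref{c2} together with Proposition~\ref{resolv}, since distinct periodic trajectories cannot be linked by extremals in an open annulus of $S$ when $1$ is a simple eigenvalue of each monodromy.

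\textbf{Step 3 (Sign change and main obstacle).} At each endpoint $s_0\in\{0,s_*\}$ the vanishing coordinate $t_i$ is a piecewise analytic function on a neighborhood of $s_0$, with $t_i(s_0)=0$ and $t_i(s)>0$ on $(0,s_*)$; its first non-vanishing Taylor coefficient at $s_0$ must therefore be of odd order, which simultaneously gives the sign-change condition~(iii) and allows analytic continuation of $z(\cdot)$ into an interval $I\supsetneq[0,s_*]$ where $t_i$ becomes negative on the other side. The main obstacle is Step~2: extracting a \emph{genuine} analytic arc in the possibly singular real analytic variety $Z_f$, and consistently gluing analytic branches across singular strata, requires the full machinery of semianalytic/subanalytic geometry. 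This is precisely why the authors acknowledge E.~Bierstone and J.-P.~Gauthier, whose expertise in resolution of singularities and in control theory respectively underlies the construction.
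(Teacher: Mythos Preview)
Your overall strategy---encode periodic trajectories as zeros of the analytic function $f$, invoke real analytic geometry to extract a curve, then rule out that the curve is a loop---matches the paper's. But there is a genuine gap in Step~2, in the way you exclude loops.

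You argue that an $S^1$-family of periodic $4$-bang trajectories would foliate an open annulus on $S$, and that Corollary~\ref{c2} together with Proposition~\ref{resolv} forbids this. Neither result yields that conclusion. An \emph{interval}-parametrized family (precisely what Item~$(a)$ of Theorem~\ref{main-th} asserts may occur) also foliates an annulus, so annular foliation cannot itself be a contradiction. Proposition~\ref{resolv} says only that each individual monodromy $M(z)$ has a simple eigenvalue~$1$; this does not prevent the corresponding one-dimensional eigenspace from varying continuously along a closed loop in $Z_f$. The paper's exclusion of Jordan curves in $Z_f^+$ rests instead on Lemma~\ref{lem!!}: associating to each $z$ the unique point $x(z)\in P_c\cap S$ with $M(z)x(z)=x(z)$ and $c^TAx(z)>0$ (the sign normalization being possible precisely because no periodic trajectory passes through $\pm x_*$), a Jordan curve would yield a continuous injective map from $S^1$ into the open arc $\{x\in P_c\cap S:c^TAx>0\}$, which is topologically impossible. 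You never invoke Lemma~\ref{lem!!}, and without it the loop case remains open. The same lemma is also what rules out strata of dimension $\geq 2$ in $Z_f^+$, which you implicitly assume away when you take for granted that the curve-selection arc can be maximally continued as a \emph{curve}.

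A smaller error appears in Step~3: from $t_i(s_0)=0$ and $t_i>0$ on one side of $s_0$ you infer that the first nonvanishing Taylor coefficient has odd order. That implication is false (consider $t_i(s)=(s-s_0)^2$). The argument should run the other way: if the order were even, the analytic continuation of the curve past $s_0$ would keep $t_i\geq 0$, hence remain in $(\mathbb{R}_+)^4\cap Z_f$, contradicting that $s_0$ is an endpoint of the maximal segment in $Z_f^+$.
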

 \begin{proof} For every $n\geq 0$, let $z_n=(t_n^{(1)}, t_n^{(2)}, t_n^{(3)}, t_n^{(4)})^T\in \R_+^4$ be the $4$-tuple of switching times corresponding to $\gamma_n$ and $M_n:=M(z_n)$. In addition, if $x_n$ is an eigenvector of $M_n$ associated with the eigenvalue $1$ for $n\geq 0$, we can assume  that $c^TAx_n>0$ according to Lemma~\ref{lem!!}.  In that case,  for every $n\geq 0$, $x_n$ is the unique vector of $P_c\cap S$ such that $M_nx_n=x_n$. Moreover,  the points $z_n$, $n\geq 0$, are two by two distinct. Indeed, one would have otherwise that  $M_{n_1}=M_{n_2}$ for two distinct indices $n_1$ and $n_2$, which would imply that $x_{n_1}=x_{n_2}$ since $1$ is a simple eigenvalue of the $M_n$'s and leading to $\gamma_{n_1}=\gamma_{n_2}$.
 
Due to the fact that the matrices of the form $A+ubc^T$, $0\leq u\leq 1$, are Hurwitz (cf. Corollary~\ref{hurwitz}), the times $t_n^i$ are uniformly bounded for $n\geq 0$ and $1\leq i\leq 4$, yielding  that there exists a compact subset $K\subset \R_+^4$ such that $z_n\in K$ for every $n\geq 0$, the set $Z_f\cap  (\R_+)^4$ is compact and $0\in Z_f$ is isolated in $Z_f\cap  (\R_+)^4$. Up to a subsequence, we also have that the sequence 
$(z_n)_{n\geq 0}$ converges to some $z_*\in Z_f\cap  (\R_+)^4$. For later use, we consider $Z_f^+$ the set $Z_f\cap  (\R_+)^4$ minus its isolated points.  

We first prove that $Z_f^+$ does not contain any non trivial Jordan curve.
 Reasoning by contradiction, there exists a non trivial curve 
$\Gamma:S^1\rightarrow Z_f^+$ such that one can associate with any of its point  $\Gamma(\theta)$ an eigenvector $x(\theta)$ of $M(\Gamma(\theta))$ in $P_c\cap S$. The curve $x:S^1\rightarrow P_c\cap S$ can be chosen to be continuous (except possibly at a single point) and it is clearly injective, since otherwise there would exist distinct periodic trajectories starting from the same point. It implies that $\theta\mapsto c^TAx(\theta)$ keeps a constant sign according to Lemma~\ref{lem!!}.
 Moreover, the closure of the support of $x$ must contain an arc joining a pair of antipodal points and thus  there exists $\bar{\theta}\in S^1$ so that $x(\bar{\theta})=\pm x_*$, contradicting Lemma~\ref{lem!!}. 
 
 As a consequence of the Lojasiewicz's structure theorem for varieties (\cite[Theorem~6.3.3, page 168]{Krantz}) and the fact that $z_*\in Z_f^+$, there must exist an analytic submanifold in $Z_f\setminus\{0\}$ of dimension $m$ with $1\leq m\leq 3$ whose closure contains $z_*$. 
The non existence of a  non trivial Jordan curve in $Z_f^+$ implies at once that $Z_f^+$ does not contain any stratum of dimension $m\geq 2$ and thus $Z_f^+$ only contains strata of dimension zero and one. 
In that case, by using the theorem of resolution of singularities (see for instance \cite{BM}),
the conclusion of Lojasiewicz's structure theorem can be strengthened as follows: every stratum of dimension zero in $Z_f^+$ is in the interior of a continuous injective curve contained in $Z_f$ and admitting an analytic  parameterization.
One further notices that there must exists only one such curve going through any stratum of dimension zero $\bar{z}$ otherwise, if $\bar{x}\in P_c\cap S$ denotes the eigenvector of $M(\bar{z})$ associated with the eigenvalue $1$ with $c^TA\bar{x}>0$, there would exists a neighborhood $\bar{X}$ of $\bar{x}$ open in $P_c\cap S$ such that every $y\in \bar{X}\cap(P_c\cap S)$ is the eigenvector associated with $1$ of at least two distinct values of $M(\cdot)$, which is impossible. From this fact we also deduce that $Z_f^+\cap (\mathbb{R}_+^*)^4$ has a structure of a topological one-dimensional manifold (embedded in $(\mathbb{R}_+^*)^4$), and by a simple adaptation of the previous arguments $Z_f^+$ is itself a compact  one-dimensional manifold (with or without boundary). By applying standard results (see e.g. Exercises 1.2.6 and 1.4.9 in~\cite{Hirsch}) we deduce that each connected component of $Z_f^+$ is either homeomorphic to a circle or to a closed segment. The first possibility has been previously ruled out and since the arguments above imply that the extremities of each connected component belong to the boundary of  $(\mathbb{R}_+)^4$, we get the thesis.

 \end{proof}
 
Theorem~\ref{main-th} easily follows from the previous lemma.
Indeed, assume that Item $(b)$ of Theorem~\ref{main-th} does not hold, i.e. there exists an infinite number of periodic trajectories for System~\eqref{sys}. Then, Item $(iii)$ of Lemma~\ref{lem:accu} immediately implies that there exists a $1$-parameter family of periodic trajectories $s\mapsto \gamma_s(\cdot)$ with at most four bang arcs bounded at  its extremities by periodic trajectories with exactly two bang arcs. Moreover the mapping $s\mapsto \gamma_s(\cdot)$ is injective and  continuous as a function with values in $\mathcal{C}^0([0,T])$ for $T>0$, which concludes the proof of  Item $(a)$ of Theorem~\ref{main-th}.  
The theorem is proved.  
 
\medskip

We now prove Proposition~\ref{omega0}. We start the argument by making the following remark. 

\begin{remark}\label{order}
{By the same reasoning which proved that the accumulation of periodic trajectories yields the existence of a continuum of periodic trajectories (i.e. Item $(a)$ of Theorem~\ref{main-th}), one can show that there exists only a finite number of such continua of periodic trajectories on the unit Barabanov sphere. Thus there exists a finite number of isolated periodic trajectories and isolated continua of periodic trajectories $(\Gamma_i)_{i=1,\dots,p}$. 
It is useful to define an order on the set  $(\Gamma_i)_{i=1,\dots,p}$. For this purpose let us call $\hat{\Gamma}_i$ the intersection of $\Gamma_i$ with the arc $\Lambda_c^+=\{x\in P_c\cap S:c^TAx>0\}$ ($\hat{\Gamma}_i$ reduces to a point if $\Gamma_i$ is a single periodic trajectory); if $x_*$ is the point of $P_c\cap S$ satisfying $c^TAx_*=0$ and $c^TA^2x_*>0$ then we say that  $\Gamma_i<\Gamma_j$ if $\hat{\Gamma}_i$ is contained in the interior of the arc of $\Lambda_c^+$ joining $\hat{\Gamma}_j$ and $x_*$.
Equivalently $\Gamma_i<\Gamma_j$ if $\Gamma_i$ is contained in the connected subset of $S$ enclosed by $\Gamma_j$ which contains $x_*$.
}
\end{remark}

We will need the following lemma.
\begin{lemma}\label{stability0}
 {Consider $\Gamma$  as defined previously.  It divides $S\setminus  \Gamma$ into two connected components  whose closures will be denoted by $S_1$ and $S_2$. Then, for $i=1,2$, there exists an open neighborhood $U_i$  of $\Gamma$ such that either all extremal trajectories starting in $S_i\cap U_i$  converge to $\Gamma$ or no one does.}  

\end{lemma}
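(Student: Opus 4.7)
The plan is to apply a Poincar\'e first-return map argument on a transverse section based at a point of $\partial\Gamma$ bordering $S_1$, and to exploit its monotonicity together with the finiteness of periodic orbits and continua stated in Remark~\ref{order}. I treat the case $i=1$; the case $i=2$ is symmetric.

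First, I would single out the component $\tilde\Gamma$ of $\partial \Gamma$ bordering $S_1$: if $\Gamma$ is an isolated periodic trajectory then $\tilde\Gamma=\Gamma$, while if $\Gamma$ is an isolated $1$-parameter family as in Theorem~\ref{main-th}$(a)$, then $\tilde\Gamma$ is the two-bang boundary trajectory separating the family from $S_1$. By Corollary~\ref{hurwitz} the matrices of $\mathcal{M}$ are nonsingular and Hurwitz, so no point of $\tilde\Gamma$ is stationary. Picking a non-stationary $p\in\tilde\Gamma$ and applying Lemma~\ref{st}, I obtain a transverse section $\Sigma$ at $p$. Shrinking $\Sigma$ if needed, I arrange $\Sigma\setminus\{p\}\subset S_1$ and parametrize $\Sigma$ by $s\in[0,\delta]$ with $s=0$ mapped to $p$.

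Next I would construct the Poincar\'e first-return map. By Condition~G, verified in Proposition~\ref{bd}, combined with Proposition~\ref{p4}, two distinct extremal trajectories cannot meet on $S$: an intersection would force differentiability of $v$ at the meeting point and forward uniqueness would identify them afterwards. Using the continuous dependence of bang-bang extremals on the initial condition near $p$, which relies on the transversality of zeros of the switching factors $\phi_b$ and $\phi_c$ given by Lemma~\ref{lem0} and Proposition~\ref{pr1bis}, there exists $\delta'\le\delta$ such that for $s\in[0,\delta']$ the unique extremal starting at the point of parameter $s$ returns to $\Sigma$ after a time close to the period of $\tilde\Gamma$. This yields a continuous first-return map $P\colon[0,\delta']\to\Sigma$ with $P(0)=0$. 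The Jordan curve theorem on $S$, combined with the non-crossing of extremals, then forces $P$ to be monotone nondecreasing.

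The next step is to rule out nontrivial fixed points. A fixed point $s^*\in(0,\delta')$ of $P$ would produce a periodic extremal passing through a point in $S_1\setminus\Gamma$, distinct from $\Gamma$; by Remark~\ref{order} there are only finitely many $\Gamma_j$'s on $S$, so shrinking $U_1$ ensures that $U_1\cap \overline{S_1}$ meets no $\Gamma_j$ other than $\Gamma$ itself. The classical alternative for continuous monotone self-maps of an interval without interior fixed points then yields the dichotomy: either $P(s)<s$ on $(0,\delta')$, in which case $P^n(s)\to 0$, the $\omega$-limit of each corresponding extremal contains $p$, and Theorem~\ref{p2} identifies this $\omega$-limit with $\tilde\Gamma\subset\Gamma$, so every such trajectory converges to $\Gamma$; or $P(s)>s$ on $(0,\delta')$, in which case the iterates $P^n(s)$ leave $[0,\delta')$ and the corresponding extremals eventually exit $U_1$, hence none of them converges to $\Gamma$. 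Finally, choosing $U_1$ to be a sufficiently thin tubular neighborhood of $\tilde\Gamma$ in $S$ on the $S_1$ side, every extremal starting in $S_1\cap U_1$ crosses $\Sigma$ in finite time, reducing the general case to the one already treated. The step I expect to be the main obstacle is the well-posedness and continuity of $P$ in the continuum case, where $\tilde\Gamma$ has only two bang arcs while nearby extremals from the $S_1$ side may a priori acquire additional switchings; confirming that these extremals indeed return to $\Sigma$ in a continuous manner will demand a careful control of the zeros of the switching function $\phi$ under perturbations of the initial datum, building on Lemma~\ref{lem0} and Proposition~\ref{pr1bis}.
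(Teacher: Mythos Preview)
Your approach via a Poincar\'e first-return map is natural, but it differs from the paper's argument and carries a gap you have not identified. The paper does not construct a return map; it argues instead that if a \emph{single} extremal $x(\cdot)$ converges to $\Gamma$ from the $S_1$ side, then the open region $U\subset S_1$ bounded by $\partial S_1$, one turn $x([t_0,t_1])$ of that trajectory, and the short arc of the transverse section $P_c\cap O$ between $x(t_0)$ and $x(t_1)$ is forward invariant: trajectories cannot cross $\partial S_1$ by Corollary~\ref{c2}, cannot cross $x([t_0,t_1])$ by Proposition~\ref{p4} together with Condition~G, and cannot exit through the arc since the section is crossed in one direction only. As $U$ contains no periodic orbit, Theorem~\ref{p2} forces every extremal in $U$ to converge to $\partial S_1\subset\Gamma$, and one then chooses $U_1$ so that $(S_1\cap U_1)\setminus\partial S_1\subset U$. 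No uniqueness or continuity of a return map is needed.

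The gap in your argument is the phrase ``the unique extremal starting at the point of parameter $s$''. Forward uniqueness of an extremal from $q\in\Sigma$ is provided by Proposition~\ref{p3} only when $v$ is differentiable at $q$, and you do not establish this on $\Sigma$; the non-crossing property you correctly extract from Proposition~\ref{p4} says only that two extremals which meet must merge thereafter, not that a given point admits a single forward extremal. If $\partial v(q)$ is not a singleton, several extremals may issue from $q$ and $P$ becomes a multivalued relation. Monotonicity of such a relation does not by itself yield the dichotomy $P(s)<s$ versus $P(s)>s$: it is a priori compatible with non-crossing that from some $s_0$ one extremal spirals inward and another outward, with no fixed point in between. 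To close this case you would in effect need the trapping-region argument the paper uses. Your flagged obstacle about continuity of $P$ near a two-bang boundary cycle is genuine but secondary to this well-definedness issue.
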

\begin{proof} 
To prove the lemma we will show that if there exists a single trajectory $x(\cdot)$ converging to $\Gamma$ from one of the two corresponding connected components, say $S_1$, then it is possible to define an open neighborhood $U_1$  of $\Gamma$ in $S_1$  such that all extremal trajectories starting in $U_1$ converges to $\Gamma$.
According to Lemma \ref{lem!!}, if $\bar x\in P_c\cap \Gamma$ then there exists a neighborhood $O$ of $\bar x$ in $S$ such that $P_c\cap O$ is a transverse section for System~\eqref{sa}. In particular, by Lemma~\ref{3}, $x(\cdot)$ intersects $P_c\cap O$ at a sequence of points $(x(t_k))$, $k\geq 0$, such that $t_k<t_{k+1}$ and $\lim_{l\rightarrow\infty}x(t_k)=\bar{x}$ monotonically.
Consider the open set $U$ of $S_1$ whose boundary is equal to the union of $\partial S_1$, of $\{x(t),\ t_0\leq t\leq t_{1}\}$ and the open arc  in $P_c\cap O$ connecting $x(t_0)$ and $x(t_{1})$, denoted by $(x(t_0),x(t_{1}))$. 
Then, following the classical arguments in the proof of Poincar\'e-Bendixson Theorem, together with Proposition~\ref{p4} and Corollary~\ref{c2}, it turns out that $U$ is invariant and it does not contain periodic trajectories. Thus every extremal trajectory starting in $U$ converges to $\Gamma$, concluding the proof.

\end{proof}

\begin{figure}
\begin{center}
\input{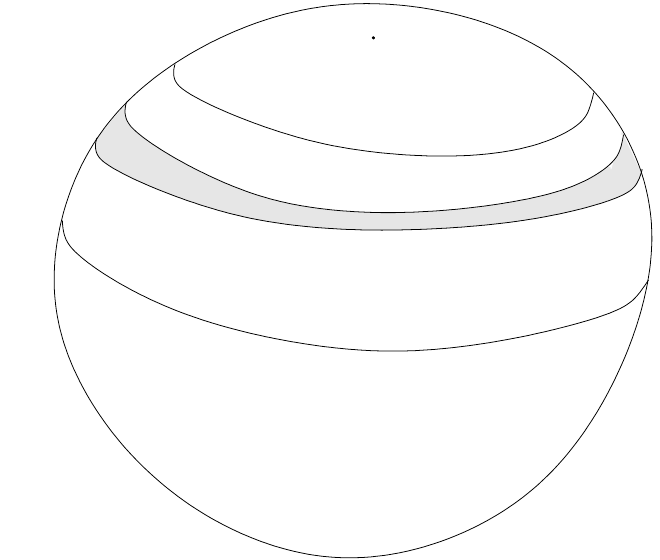_t}
\caption{Periodic trajectories in the $\omega$-limit set $\omega(x(\cdot))$.}
\label{f-fnotations}
\end{center}
\end{figure}

Let $x(\cdot)$ be a non trivial trajectory of  System~\eqref{sys} which does not converge to zero as $t$ tends to infinity. Then, as $v(x(\cdot))$ is non-increasing, $V:=\lim_{t\rightarrow\infty}v(x(t))$ exists 
and is strictly positive. Assuming without loss of generality $V=1$ we deduce that the $\omega$-limit set $\omega(x(\cdot))$ of $x(\cdot)$ is included in $S$. Moreover, by standard arguments, given a point $z$ of $\omega(x(\cdot))$ there exists an extremal trajectory starting from $z$ contained in $\omega(x(\cdot))$ and thus the $\omega$-limit set of this trajectory, which is, by Theorem~\ref{p2}, a non trivial periodic  trajectory $\gamma$ of System~\eqref{sys}, also belongs to $\omega(x(\cdot))$. 
By connectedness of $\omega(x(\cdot))$ it turns out that there exist two possibly coinciding periodic trajectories $\gamma^-\leq \gamma^+$ on $S$ (according to the order defined in Remark~\ref{order}) such that $\omega(x(\cdot))$ coincides with the set of periodic trajectories $\gamma$ of \eqref{sys} on $S$ with $\gamma^-\leq\gamma\leq \gamma^+$. These trajectories can be arranged in a finite number of isolated periodic trajectory or isolated continua of periodic trajectories  denoted by $\tilde{\Gamma}_i\subset \omega(x(\cdot))$, $1\leq i\leq q$, and we assume that $\tilde{\Gamma}_1<\cdots<\tilde{\Gamma}_q$ (see Figure~\ref{f-fnotations}). 

By Lemma~\ref{stability0} for $2\leq i\leq q$ the sets $\tilde{\Gamma}_{i-1}$ and $\tilde{\Gamma}_i$ may only be locally attractive or repulsive  on the region of $S$ enclosed by them. Also, according to Theorem~\ref{p2}, they cannot be both repulsive in that region.  In addition, again by Theorem~\ref{p2}, in the connected component of $S\setminus \tilde{\Gamma}_1$ containing $x_*$, the point of $S$ satisfying $c^Tx_*=c^TAx_*=0$ and $c^TA^2x_*>0$, one has that either $\tilde{\Gamma}_1$ is locally attractive or $\omega(x(\cdot))$ is empty. An analogous statement holds for the  connected component of $S\setminus \tilde{\Gamma}_q$ not containing $x_*$. Taking into account the previous remarks it is thus easy to see that there exists $\tilde{\Gamma}_i$ such that, on each connected component of $S\setminus\tilde{\Gamma}_i$, either $\tilde{\Gamma}_i$ is locally attractive or $\omega(x(\cdot))$ is empty.

Notice that the set $P_c\cap \tilde{\Gamma}_i$ is the union of two arcs in $P_c\cap S$ (which reduce to points if $ \tilde{\Gamma}_i$ is a single periodic trajectory). Let $C$ be one of these arcs, then any small enough neighborhood of $C$ in $P_c$ is a transverse section of System~\eqref{sys}. Moreover, let $T_1,T_2$ be positive times such that each periodic trajectory $\gamma$ in $ \tilde{\Gamma}_i$ has period $T_{\gamma}$ satisfying $T_1<T_{\gamma}<T_2$ (the existence of these bounds relies on the continuity of the period of the trajectories of $\tilde{\Gamma}_i$ with respect to the initial datum). The following lemma holds.
\begin{lemma}\label{finite1}
For every small enough $\varepsilon>0$ let $\mathcal{B}_{\varepsilon}$ be the open neighborhood of $C$ in $P_c$ of the points whose distance from $C$ is less than $\varepsilon$. Then there exists $\tau>0$ such that, for every time $t\geq \tau$ with 
$x(t)\in \mathcal{B}_{\varepsilon}$, the smallest $t'>0$ with $x(t+t')\in \mathcal{B}_{\varepsilon}$ satisfies $t'\in (T_1,T_2)$.

\end{lemma}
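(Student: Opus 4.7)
My plan is to argue by contradiction. I would suppose the conclusion fails, which provides a sequence $t_n\to+\infty$ with $x(t_n)\in\mathcal{B}_\varepsilon$ and such that the first return time $t'_n$ of $x(t_n+\cdot)$ to $\mathcal{B}_\varepsilon$ (possibly $+\infty$) is not in $(T_1,T_2)$. Using Banach-Alaoglu on the bounded switching laws together with the compactness argument used in the proof of Lemma~\ref{union}, I would extract a subsequence along which $x(t_n+\cdot)$ converges uniformly on compact sets to a trajectory $y_*(\cdot)$ of System~\eqref{sys}. Since $v(x(t))\to 1$, Remark~\ref{rema} gives that $y_*(\cdot)$ is an extremal trajectory on $S$, contained in $\omega(x(\cdot))=\bigcup_{j=1}^q\tilde{\Gamma}_j$. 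Provided $\varepsilon$ is chosen so small that $\mathcal{B}_\varepsilon$ meets only $\tilde{\Gamma}_i$ and is disjoint from the antipodal arc $-C$, the initial point $y_*(0)$ lies in $\overline{C}$, and Lemma~\ref{lem!!} rules out $y_*(0)=\pm x_*$, so $y_*(0)$ belongs to the open arc $C$ and $c^TAy_*(0)>0$. Forward uniqueness (Proposition~\ref{p4}) then forces $y_*(\cdot)$ to coincide with the periodic trajectory of $\tilde{\Gamma}_i$ through $y_*(0)$, of minimal period $T_{y_*}\in(T_1,T_2)$.

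The next step is to locate the crossings of $y_*(\cdot)$ with $\mathcal{B}_\varepsilon$ inside $[0,T_2]$. By Proposition~\ref{gn}, $c^Ty_*(\cdot)$ has exactly two sign changes per minimal period; since one of them is the upward crossing at $y_*(0)\in C$ (where $c^TAy_*(0)>0$) and the signs of the crossings must alternate, the other one is a downward crossing in $-C$ at some $s_1\in(0,T_{y_*})$. For $\varepsilon$ small enough, $-C$ is separated from $\mathcal{B}_\varepsilon$, so the only crossings of $y_*$ with $\mathcal{B}_\varepsilon$ in $[0,T_2]$ occur at $s=0$ and $s=T_{y_*}$, both of which are transversal.

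Finally, combining the uniform convergence $x(t_n+\cdot)\to y_*(\cdot)$ on $[0,T_2]$ with the above transversality should yield $t'_n\to T_{y_*}\in(T_1,T_2)$, contradicting the choice of the sequence. More precisely, $x(t_n+\cdot)$ exits $\mathcal{B}_\varepsilon$ immediately after $s=0$, since the absolutely continuous function $s\mapsto c^Tx(t_n+s)$ satisfies $\tfrac{d}{ds}c^Tx(t_n+s)=c^TAx(t_n+s)+u(t_n+s)(c^Tb)(c^Tx(t_n+s))$ which at $s=0$ reduces to $c^TAx(t_n)\to c^TAy_*(0)>0$; it stays uniformly away from $\mathcal{B}_\varepsilon$ on any compact subinterval of $(0,T_{y_*})$ by uniform closeness to $y_*$; and it re-enters $\mathcal{B}_\varepsilon$ at some time $s_n\to T_{y_*}$ by an implicit-function-theorem argument applied at the transversal crossing of $y_*(\cdot)$ at $T_{y_*}$. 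The main obstacle I anticipate is the first step, namely identifying the limit trajectory $y_*(\cdot)$ with a single periodic trajectory of the component $\tilde{\Gamma}_i$ rather than a patchwork of arcs from different trajectories of a continuum; this is precisely what Lemma~\ref{lem!!} (excluding $y_*(0)=\pm x_*$ so that $P_c$ is transverse at $y_*(0)$) and the forward uniqueness of Proposition~\ref{p4} together secure.
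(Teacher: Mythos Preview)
Your argument is correct and follows essentially the same contradiction-plus-compactness scheme as the paper: extract a limit extremal trajectory $y_*(\cdot)\subset\omega(x(\cdot))$ starting in $\bar{\mathcal{B}}_\varepsilon$, observe that its first return to $\mathcal{B}_\varepsilon$ lies in $(T_1,T_2)$, and use transversality together with uniform convergence to force $t'_n\to T_{y_*}$. You in fact supply more detail than the paper on the transversal-crossing step, and your remark that $y_*(0)\in\omega(x(\cdot))\cap\bar{\mathcal{B}}_\varepsilon\subset C$ neatly sidesteps the paper's separate discussion of the boundary case.

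Two small points. First, the second crossing of $y_*(\cdot)$ with $P_c$ lands in the \emph{other} arc $C'$ of $P_c\cap\tilde{\Gamma}_i$, not in $-C$; in general $\tilde{\Gamma}_i$ is not centrally symmetric. This does not damage the argument, since $C'\subset\{x\in P_c\cap S:c^TAx<0\}$ just like $-C$, and hence is disjoint from $\mathcal{B}_\varepsilon$ for small $\varepsilon$. Second, invoking Proposition~\ref{p4} directly for ``forward uniqueness at $y_*(0)$'' is slightly imprecise: that proposition requires two extremals that genuinely intersect at the point, which is not immediate for $y_*$ and $\gamma_0$. The clean justification is via Corollary~\ref{c2} (itself based on Proposition~\ref{p4}): if $y_*$ ever left $\gamma_0$ it would hit some other periodic $\gamma'\subset\tilde{\Gamma}_i$ at a positive time, be trapped on $\gamma'$ thereafter, and by letting the exit time tend to zero one gets $y_*(0)\in\gamma_0\cap\gamma'=\emptyset$. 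So your instinct (and the anticipated ``main obstacle'') is right, and the ingredient you name is the correct one; only the packaging via Corollary~\ref{c2} makes it airtight.
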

\begin{proof} 
Let $\bar{\mathcal{B}}_{\varepsilon}$ be the closure of $\mathcal{B}_{\varepsilon}$. By the properties of $ \tilde{\Gamma}_i$ the conclusion is obviously true if, instead of $x(\cdot)$, one considers any extremal trajectory on $S$ starting in  $S\cap\bar{\mathcal{B}}_{\varepsilon}$ and contained in $\omega(x(\cdot))$. The only thing to precise here is that if one starts at the boundary of  $S\cap\bar{\mathcal{B}}_{\varepsilon}$, then, the next intersection of the extremal trajectory with  $S\cap\bar{\mathcal{B}}_{\varepsilon}$ must actually lie in  $S\cap \mathcal{B}_{\varepsilon}$. 

For the general case the argument goes by contradiction. Let us fix $\varepsilon>0$ small and consider an increasing  sequence of times $(t_n)_{n\geq0}$ such that $x(t_n)\in \mathcal{B}_{\varepsilon}$ and  the smallest $t_n'>0$ with $x(t_n+t_n')\in \mathcal{B}_{\varepsilon}$ does not satisfy $t_n'\in (T_1,T_2)$.
Up to a subsequence, $x(t_n+\cdot)$ converges on $[0,T_2]$ to an extremal trajectory on $S$ starting at $\tilde{x}\in S\cap\bar{\mathcal{B}}_{\varepsilon}$ and contained in 
$\omega(x(\cdot))$ which, as explained above, must re-intersect  for the first time $S\cap \mathcal{B}_{\varepsilon}$ at some $t\in (T_1,T_2)$. We have reached a contradiction and that concludes the proof of the lemma.

\end{proof}

To conclude the proof of Proposition~\ref{omega0}
 it is enough to observe that, given $\mathcal{B}_{\varepsilon}$ as in Lemma~\ref{finite1}, the sequence $(t_n)_{n\geq 0}$ of times such that $x(t_n)\in \mathcal{B}_{\varepsilon}$ is infinite and satisfies, for every $n$ large enough, $t_{n+1}-t_n \in [T_1,T_2]$. In particular up to taking $n$ large enough $x(\cdot)$ can be considered as arbitrarily close to an extremal trajectory on the interval $[t_n,t_{n+1}]$ and the latter must be confined on a neighborhood of $\tilde{\Gamma}_i$ that can be made arbitrarily small by letting $\varepsilon$ go to zero. This proves that $\omega(x(\cdot))\subset \tilde{\Gamma}_i$ (and in particular $i=q=1$), concluding the proof of the proposition.

\bibliographystyle{abbrv}
\bibliography{biblio}

\end{document}